\newcommand{\defi}[1]{{\upshape\sffamily #1}}
\renewcommand{\a}{\alpha}
\newcommand{\bul}{\bullet}
\newcommand{\complex}{\mathbb{C}}
\renewcommand{\d}{\delta}
\newcommand{\D}{\mathcal{D}}
\newcommand{\ds}{\displaystyle\sum}
\newcommand{\bw}{\bigwedge}
\renewcommand{\ll}{\lambda}
\newcommand{\oo}{\otimes}
\newcommand{\p}{\partial}
\newcommand{\s}{\sigma}
\newcommand{\coldet}{\operatorname{col-det}}
\renewcommand{\det}{\operatorname{det}}
\newcommand{\diag}{\operatorname{diag}}
\newcommand{\dom}{\operatorname{dom}}
\newcommand{\rank}{\operatorname{rank}}
\newcommand{\sgn}{\operatorname{sgn}}
\newcommand{\GL}{\operatorname{GL}}
\newcommand{\Id}{\operatorname{Id}}
\newcommand{\Pf}{\operatorname{Pf}}
\newcommand{\SL}{\operatorname{SL}}
\newcommand{\Sym}{\operatorname{Sym}}
\newcommand{\bb}[1]{\mathbb{#1}}
\renewcommand{\rm}[1]{\textrm{#1}}
\newcommand{\mc}[1]{\mathcal{#1}}
\newcommand{\mf}[1]{\mathfrak{#1}}
\newcommand{\ul}[1]{\underline{#1}}
\newcommand{\scpr}[2]{\left\langle #1,#2 \right\rangle}
\def\lra{\longrightarrow}
\newtheorem{theorem}{Theorem}[section]
\newtheorem*{theorem*}{Theorem}
\newtheorem{lemma}[theorem]{Lemma}
\newtheorem{conjecture}[theorem]{Conjecture}
\newtheorem{proposition}[theorem]{Proposition}
\newtheorem{corollary}[theorem]{Corollary}
\newtheorem*{corollary*}{Corollary}
\newtheorem*{minors*}{Theorem on Maximal Minors}
\newtheorem*{Pfaffians*}{Theorem on sub--maximal Pfaffians}
\theoremstyle{definition}
\newtheorem{definition}[theorem]{Definition}
\newtheorem*{definition*}{Definition}
\theoremstyle{remark}
\newtheorem{remark}[theorem]{Remark}
\newtheorem*{remark*}{Remark}
\numberwithin{equation}{section}
\begin{document}

\title{Bernstein--Sato polynomials for maximal minors and sub--maximal Pfaffians}

\author{Andr\'as C. L\H{o}rincz}
\address{Department of Mathematics, University of Connecticut, Storrs, CT 06269}
\email{andras.lorincz@uconn.edu}

\author{Claudiu Raicu}
\address{Department of Mathematics, University of Notre Dame, Notre Dame, IN 46556\newline
\indent Institute of Mathematics ``Simion Stoilow'' of the Romanian Academy}
\email{craicu@nd.edu}

\author{Uli Walther}
\address{Department of Mathematics, Purdue University, West Lafayette, IN 47907}
\email{walther@math.purdue.edu}

\author{Jerzy Weyman}
\address{Department of Mathematics, University of Connecticut, Storrs, CT 06269}
\email{jerzy.weyman@uconn.edu}

\subjclass[2010]{Primary 13D45, 14F10, 14M12, 32C38, 32S40}

\date{\today}

\keywords{Bernstein--Sato polynomials, $b$-functions, determinantal ideals, local cohomology}

\begin{abstract} We determine the Bernstein-Sato polynomials for the ideal of maximal minors of a generic $m\times n$ matrix, as well as for that of sub-maximal Pfaffians of a generic skew-symmetric matrix of odd size. As a corollary, we obtain that the Strong Monodromy Conjecture holds in these two cases.
\end{abstract}

\maketitle

\section{Introduction}\label{sec:intro}

Consider a polynomial ring $S=\bb{C}[x_1,\cdots,x_N]$ and let $\D=S[\p_1,\cdots,\p_N]$ denote the associated \defi{Weyl algebra} of differential operators with polynomial coefficients ($\p_i=\frac{\p}{\p x_i}$). For a non-zero element $f\in S$, the set of polynomials $b(s)\in\bb{C}[s]$ for which there exists a differential operator $P_b\in\D[s]$ such that
\begin{equation}\label{eq:opPb}
P_b\cdot f^{s+1}=b(s)\cdot f^s
\end{equation}
form a non-zero ideal. The monic generator of this ideal is called the \defi{Bernstein--Sato polynomial (or the $b$-function) of $f$}, and is denoted $b_f(s)$. The $b$-function gives a measure of the singularities of the scheme defined by $f=0$, and its zeros are closely related to the eigenvalues of the monodromy on the cohomology of the Milnor fiber. In the case of a single hypersurface, its study has originated in \cites{bernstein,sato-shintani}, and later it has been extended to more general schemes in \cite{budur-mustata-saito} (see Section~\ref{subsec:bsatoideal}). Despite much research, the calculation of $b$-functions remains notoriously difficult: several algorithms have been implemented to compute $b$-functions, and a number of examples have been worked out in the literature, but basic instances such as the $b$-functions for determinantal varieties are still not understood. In \cite{budur-bsatogenl} and \cite{budur-barcelona}, Budur posed as a challenge and reviewed the progress on the problem of computing the $b$-function of the ideal of $p\times p$ minors of the generic $m\times n$ matrix. We solve the challenge for the case of maximal minors in this paper, and we also find the $b$-function for the ideal of $2n\times 2n$ Pfaffians of the generic skew-symmetric matrix of size $(2n+1)\times(2n+1)$. For maximal minors, our main result is as follows:

\begin{minors*}[Theorem~\ref{thm:bfunmaxlminors}]
 Let $m\geq n$ be positive integers, consider the generic $m\times n$ matrix of indeterminates $(x_{ij})$, and let $I=I_n$ denote the ideal in the polynomial ring $S=\bb{C}[x_{ij}]$ which is generated by the $n\times n$ minors of $(x_{ij})$. The $b$-function of $I$ is given by
 \[b_I(s)=\prod_{i=m-n+1}^m (s+i).\]
\end{minors*}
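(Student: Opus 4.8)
The plan is to prove the two divisibilities $b_I(s)\mid\prod_{i=m-n+1}^{m}(s+i)$ and $\prod_{i=m-n+1}^{m}(s+i)\mid b_I(s)$ separately. Three structural features organize the argument. First, $I$ is $\GL_m\times\GL_n$-invariant and this group acts on $\bb A^{mn}$ with finitely many orbits (the rank strata), which imposes strong constraints on $b_I$ and, via the Cauchy decomposition $S=\bigoplus_{\ll}S_{\ll}\bb C^m\oo S_{\ll}\bb C^n$, renders the relevant $\mathcal D$-modules multiplicity free. Second, the base case $m=n$ is the classical Cayley identity $\det(\p)\cdot\det(x)^{s+1}=\bigl(\prod_{i=1}^{n}(s+i)\bigr)\det(x)^s$, which gives $b_{\det}(s)=\prod_{i=1}^{n}(s+i)$ in agreement with the formula. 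Third, a standard computation for generic determinantal ideals gives $\codim I_n=m-n+1$ and $\operatorname{lct}(\bb A^{mn},I_n)=m-n+1$; since the largest root of the Bernstein--Sato polynomial of an ideal is the negative of its log canonical threshold, $-(m-n+1)$ is the largest root, and one expects (and the proof should confirm) that all roots are negative integers, hence lie in $\{-(m-n+1),-(m-n+2),\dots\}$.

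For the upper bound I would produce an explicit functional equation in the Budur--Musta\c{t}\u{a}--Saito framework on $\bb A^{mn}\times\bb A^{r}$, with $r=\pmn$ the number of maximal minors and auxiliary coordinates $t_J$. The key input is a Cayley--Capelli-type identity adapted to maximal minors: working with the generic combination $g=\sum_J t_J\cdot[J]$ of the minors $[J]=\det(X_{J,\bullet})$ and the ``determinant operator'' $P=\sum_J t_J\cdot\det(\p_{J,\bullet})$ (more precisely, suitable $\GL_m\times\GL_n$-equivariant lifts thereof), one establishes an identity $P\cdot g^{s+1}=\bigl(\prod_{i=m-n+1}^{m}(s+i)\bigr)\cdot g^s$ modulo the terms absorbed by the framework; the shift of the window by $m-n$ reflects the $m-n$ ``extra'' rows. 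Reading off the admissible jumps of the $V$-filtration of the graph embedding from this equation yields $b_I(s)\mid\prod_{i=m-n+1}^{m}(s+i)$. The equivariance of $S$ is what makes the verification tractable, since it reduces the identity to a check on each isotypic component.

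The lower bound is the heart of the matter, and I expect the main difficulty to lie there. The top root $-(m-n+1)$ is already accounted for by the log canonical threshold. For the remaining roots $-(m-n+2),\dots,-m$ I would argue $\mathcal D$-module-theoretically: the roots of $b_I$ detect the composition factors that appear in the $V$-filtration, equivalently (via localization) in the $\mathcal D$-module generated by $\prod_J[J]^{s_J}$, which one analyzes through its restriction to the orbit stratification. On the rank-$\le k$ stratum, a slice argument together with the equivariant structure reduces the local picture to a smaller determinantal situation and eventually to square-matrix pieces governed by the Cayley identity; one then checks that passage through each of the $n$ orbit walls contributes exactly one new root, shifted by the appropriate discrepancy, so that no proper initial segment of the progression can exhaust the roots before reaching $-m$. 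Concretely this may be phrased in terms of the $\GL_m\times\GL_n$-equivariant structure of the local cohomology modules $H^\bullet_{I}(S)$ (and of $S[\prod_J[J]^{-1}]/S$), whose composition series is known from the work of Raicu--Weyman on determinantal varieties; multiplicity-freeness pins down the $\mf{gl}$-weights on which the Euler-type operator $s=\sum_J t_J\p_{t_J}$ can act with a prescribed eigenvalue, forcing each of the $n$ values $-m+n-1,\dots,-m$ to occur.

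Combining the two bounds gives $b_I(s)=\prod_{i=m-n+1}^{m}(s+i)$. The sub-maximal Pfaffian statement would then follow by running the same strategy over the $\Sp$-equivariant geometry of skew-symmetric matrices, with the Cayley identity for the Pfaffian as the base case.
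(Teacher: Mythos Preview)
Your upper bound is essentially the paper's argument. They take the $\GL_m\times\GL_n$-invariant operator $D_d=\sum_K\p_K\cdot d_K$, observe that the tuple of maximal minors is multiplicity-free (your Cauchy-decomposition remark), so $D_d\cdot d^{\ul s}=P_d(s)\cdot d^{\ul s}$ for a scalar $P_d(s)$, and compute $P_d(s)=\prod_{i=m-n+1}^m(s+i)$ via a Fourier-transform/Capelli identity. Since $D_d=\sum_K\p_K\cdot d_K$ already has the BMS shape, this gives $b_I(s)\mid P_d(s)$.

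The lower bound is where your proposal has a genuine gap. Your slice/induction step does give $b_{Z_{m-1,n-1}}(s)\mid b_{Z_{m,n}}(s)$, accounting for the roots $-(m-n+1),\dots,-(m-1)$; the log canonical threshold is subsumed in this. The entire difficulty is the single remaining root $-m$. You propose to extract it from the equivariant composition series of local cohomology or from the $V$-filtration on the orbit stratification, but the paper's authors explicitly tried the direct $\D$-module route and could not make it work: the needed statement is $F_{-m+1}\subsetneq F_{-m}$ (with $F_{\a}$ the $\D$-submodule of the localization generated by monomials $d^{\ul\a}$ of total degree $\a$), and for $m>n+1$ they leave this as an open Conjecture. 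Knowing the simple composition factors of $H^\bullet_I(S)$ does not by itself tell you at which degree a given factor enters the filtration $F_\a$, and your ``Euler operator on $\mf{gl}$-weights'' sentence supplies no mechanism for this. Note also that the implication ``$H_I^r(S)\neq 0\Rightarrow b_I(-r)=0$'' has $r=\binom{m}{n}$, not $r=m$, so it only bites when $m=n+1$.

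What the paper does instead is a specialization trick that bypasses the filtration question. They pass to the algebraically independent sub-tuple $p=(p_0,p_{ij})=(d_{[n]},\,d_{[n]\setminus\{i\}\cup\{j\}})$ of $1+n(m-n)$ Pl\"ucker coordinates, define a variant $a_p(s)$ using only the operators $\p_K$, and prove the chain $P_d(s)\mid a_p(s)\mid b_d(s)$. The divisibility $a_p\mid b_d$ comes from specializing the BMS identity for $b_d$ by setting $s_K=0$ whenever $|K\cap[n]|\le n-2$ and rewriting the surviving minors via Pl\"ucker relations. The divisibility $P_d\mid a_p$ comes from a weight argument under a torus in $\mf{gl}_m$: after the further specialization $s_{ij}=-1$, only the single term with $K=[n]$ and $\hat c=(1,0,\dots,0)$ survives, and its coefficient is exactly $P_d(s_0-n(m-n))$. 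This is the missing idea. (Minor aside: in the Pfaffian case the acting group is $\GL_{2n+1}$, not $\Sp$, and there the local-cohomology shortcut does work because the tuple has exactly $2n+1$ entries.)
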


When $m=n$, $I$ is generated by a single equation -- the determinant of the generic $n\times n$ matrix -- and the formula for $b_I(s)$ is well-known (see \cite[Appendix]{prehomogeneous} or \cite[Section~5]{raicu-Dmods}). For general $m\geq n$, if we let $Z_{m,n}$ denote the zero locus of $I$, i.e. the variety of $m\times n$ matrices of rank at most $n-1$, then using the renormalization (\ref{eq:defbZ}) our theorem states that the $b$-function of $Z_{m,n}$ is $\prod_{i=0}^{n-1}(s+i)$. It is interesting to note that this only depends on the value of $n$ and not on $m$.

The statement of the Strong Monodromy Conjecture of Denef and Loeser \cite{denef-loeser} extends naturally from the case of one hypersurface to arbitrary ideals, and it asserts that the poles of the topological zeta function of $I$ are roots of $b_I(s)$. We verify this conjecture for maximal minors and sub-maximal Pfaffians in Section~\ref{sec:SMC}. When $I=I_n$ is the ideal of maximal minors of $(x_{ij})$, the methods of \cite{docampo} can be used to show that the set of poles of the topological zeta function of $I$ is $\{-m,-m+1,\cdots,-m+n-1\}$, and therefore it coincides precisely with the set of roots of $b_I(s)$. If we replace $I$ by the ideal $I_p$ of $p\times p$ minors of $(x_{ij})$, $1<p<n$, then this is no longer true: as explained in \cite[Example~2.12]{budur-barcelona}, a computer calculation of T. Oaku shows that for $m=n=3$ one has $b_{I_2}(s)=(s+9/2)(s+4)(s+5)$, while \cite[Thm.~6.5]{docampo} shows that the only poles of the zeta function of $I_2$ are $-9/2$ and $-4$. Besides the Strong Monodromy Conjecture which predicts some of the roots of $b_{I_p}(s)$, we are not aware of any general conjectural formulas for $b_{I_p}(s)$ when $1<p<n$.

In the case of Pfaffians we prove:

\begin{Pfaffians*}[Theorem~\ref{thm:bfunskew}]
 Let $n$ be a positive integer, and with the conventions $x_{ii}=0$, $x_{ij}=-x_{ji}$, consider the generic $(2n+1)\times (2n+1)$ generic skew-symmetric matrix of indeterminates $(x_{ij})$. If we let $I$ denote the ideal in the polynomial ring $S=\bb{C}[x_{ij}]$ which is generated by the $2n\times 2n$ Pfaffians of $(x_{ij})$ then the $b$-function of $I$ is given by
 \[b_I(s)=\prod_{i=0}^{n-1}(s+2i+3).\]
\end{Pfaffians*}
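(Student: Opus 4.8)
The plan is to reduce, via a bordering trick, to the classical $b$-function of a \emph{generic} skew-symmetric matrix of \emph{even} size, supplemented (as in the treatment of maximal minors) by slice arguments to pin down the deepest root. Write $G=\GL_{2n+1}$ acting on $V=\bigwedge^2\mathbb{C}^{2n+1}$, so $S=\Sym(V^\vee)=\mathbb{C}[x_{ij}]$, with orbits $O_0\subset O_1\subset\cdots\subset O_n$, where $O_k$ is the locus of matrices of rank exactly $2k$, and $Z=V(I)=\overline{O_{n-1}}$ has $\codim Z=3$. The ideal $I$ is generated by the $2n+1$ principal sub-maximal Pfaffians $P_1,\dots,P_{2n+1}$ (delete row and column $i$), and the key identity is: adjoining new variables $y_1,\dots,y_{2n+1}$ and forming the $(2n+2)\times(2n+2)$ skew matrix $\widetilde X=\left(\begin{smallmatrix} X & \y\\ -\y^{\top} & 0\end{smallmatrix}\right)$, one has $g:=\Pf(\widetilde X)=\sum_{i}\pm\,y_i\,P_i$, and — since a skew matrix has zero diagonal automatically — after relabelling the entries this $g$ is \emph{literally} the Pfaffian of a generic $(2n+2)\times(2n+2)$ skew-symmetric matrix, a function of the $\binom{2n+2}{2}$ variables among the $x_{ij}$ and the $y_i$.

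This reduces the problem to two classical inputs. First, by \cite{prehomogeneous} the generic Pfaffian of size $2n+2$ has $b$-function $\prod_{j=1}^{n+1}(s+2j-1)=(s+1)(s+3)\cdots(s+2n+1)$; concretely this comes from the Pfaffian analogue of Cayley's identity $\Pf(\partial)\bullet g^{\,s+1}=\prod_{j=1}^{n+1}(s+2j-1)\cdot g^{\,s}$. Second, by the comparison theorem of \cite{budur-mustata-saito} relating the Bernstein--Sato polynomial of an ideal to that of a generic linear combination (in new variables) of its generators, $b_I(s)$ is obtained from $b_g(s)$ by stripping the factor $(s+1)$, so that $b_I(s)=\prod_{j=2}^{n+1}(s+2j-1)=\prod_{i=0}^{n-1}(s+2i+3)$, as claimed. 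The delicate point is this last step: one must check that the generators $P_i$ and the bordered function $g$ satisfy the hypotheses of \cite{budur-mustata-saito} and that exactly one factor $(s+1)$ is removed, no more.

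I would cross-check, and if necessary re-derive, this last point by a divisibility/slice argument, which also makes the answer robust. On one hand $b_I(s)\mid b_g(s)=(s+1)(s+3)\cdots(s+2n+1)$, while the log-canonical threshold of $I$ equals $\codim Z=3$ (as for all such Pfaffian loci, via the standard geometric desingularization of $Z$, compatibly with \cite{docampo}); hence no root of $b_I(s)$ exceeds $-3$, forcing $b_I(s)\mid(s+3)(s+5)\cdots(s+2n+1)$. On the other hand, $b_I(s)$ is divisible by the $b$-function of the transverse slice at a generic point of every orbit $O_k\subset Z$, and by the local structure of Pfaffian varieties that slice is again a space of skew matrices, now of size $2(n-k)+1$, with its own sub-maximal Pfaffian ideal; induction on $n$ then gives divisibility by $\prod_{j=0}^{n-k-1}(s+2j+3)$, and $k=1$ already produces all the predicted roots except the deepest, namely $-3,-5,\dots,-(2n-1)$. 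Comparing the two bounds, $b_I(s)$ equals $(s+3)\cdots(s+2n-1)$ or $(s+3)\cdots(s+2n+1)$, and it remains to decide whether $-(2n+1)$ occurs.

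The hard part is precisely that deepest root: it is invisible on every proper transverse slice (the slice at the origin is all of $V$, so such an argument is circular), and a direct Bernstein--Sato computation at the origin is as hard as the theorem itself. To pin it down I would use the finer $\D$-module/local-cohomology information, as in the maximal-minors case: in the category of $G$-equivariant $\D_V$-modules — whose simple objects are the intersection-cohomology modules $\mathcal{L}_0,\dots,\mathcal{L}_n$ of $\overline{O_0},\dots,\overline{O_n}$ — identify which composition factor of the $\D_V[s]$-module attached to $I$ carries the missing root, and locate it at $s=-(2n+1)$ by a Capelli-type computation for $\mathfrak{gl}_{2n+1}$ acting on $\bigwedge^2$; equivalently, trace the degeneration of the even-Pfaffian identity to the bordered locus and check that the factor $(s+2n+1)$ survives while only $(s+1)$ is lost. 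Making this reconciliation precise — whether phrased as verifying the \cite{budur-mustata-saito} hypotheses or, self-containedly, as localizing the deepest root — is where the real work lies; everything else is the bordering identity together with classical prehomogeneous-space computations.
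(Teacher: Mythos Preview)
Your approach is genuinely different from the paper's and the bordering observation is correct and attractive: the bordered matrix $\widetilde X$ is indeed a generic $(2n+2)\times(2n+2)$ skew matrix, and $g=\Pf(\widetilde X)=\sum_i\pm y_iP_i$ has $b_g(s)=(s+1)(s+3)\cdots(s+2n+1)$ by the classical prehomogeneous computation. However, the step on which everything rests---``by the comparison theorem of \cite{budur-mustata-saito}, $b_I(s)=b_g(s)/(s+1)$''---is not in \cite{budur-mustata-saito}. That paper defines $b_I$ via the $V$-filtration along the graph embedding and proves existence, independence of generators, rationality of roots, etc., but it does \emph{not} prove the identity $b_g(s)=(s+1)\,b_I(s)$ for $g=\sum y_if_i$. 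That identity was established only later by Musta\c{t}\u{a} (\emph{Bernstein--Sato polynomials for general ideals vs.\ principal ideals}, arXiv:1906.03086), well after the present paper. Your cross-check inherits the same gap: the claimed divisibility $b_I(s)\mid b_g(s)$ is not obvious without that same comparison, so your upper bound is unsecured; the lct argument removes $(s+1)$ but cannot by itself restrict $b_I$ to the odd factors.

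For comparison, the paper obtains the upper bound by an entirely different mechanism: it builds the $G$-invariant operator $D_d=\sum_i d_i^*\cdot d_i$, shows $D_d\cdot d^{\ul s}=P_d(s)\,d^{\ul s}$ with $P_d(s)=\prod_{i=0}^{n-1}(s+2i+3)$ via a Fourier-transform/Capelli computation in $\mc{U}(\mf{gl}_{2n+1})$ (Lemma~\ref{lem:Fs^r-1} and \cite[Cor.~11.3.19]{howe-umeda}), so $b_I\mid P_d$ by Proposition~\ref{prop:DfgivesPfs}. The lower bound is then obtained exactly along the lines you sketch in your cross-check: the slice Lemma~\ref{lem:localizeskewsym} plus (\ref{eq:divinduction}) gives the roots $-3,\dots,-(2n-1)$ by induction, and the deepest root $-(2n+1)$ is forced by Corollary~\ref{cor:rootfromloccoh} together with the nonvanishing $H^{2n+1}_I(S)\neq 0$ from \cite{raicu-weyman-witt}. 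So your ``hard part'' is handled in the paper by a clean local-cohomology criterion rather than a $\D$-module composition-series analysis.

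In short: if you are willing to invoke Musta\c{t}\u{a}'s later theorem, your bordering argument gives a one-line proof that bypasses the Capelli computation, the induction, and the local cohomology altogether---a real simplification. But as written, with the attribution to \cite{budur-mustata-saito}, the key step is a gap, and your fallback argument collapses back to essentially the paper's own method for the lower bounds while leaving the upper bound unproved.
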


If we write $Z_n$ for the zero locus of $I$, i.e. the variety of $(2n+1)\times(2n+1)$ skew-symmetric matrices of rank at most $(2n-2)$, then by (\ref{eq:defbZ}) we get $b_{Z_n}(s)=\prod_{i=0}^{n-1}(s+2i)$. By \cite[Appendix]{prehomogeneous} or \cite[Section~6]{raicu-Dmods}, this is the same as the $b$-function of the hypersurface of singular $2n\times 2n$ skew-symmetric matrices.

\subsection*{Organization}

In Section~\ref{sec:preliminaries} we review some generalities on representation theory and $\D$-modules, we recall the necessary results on invariant differential operators and their eigenvalues, and we state the basic results and definitions regarding $b$-functions of arbitrary ideals. In Section~\ref{sec:boundingbfun} we illustrate some methods for bounding the $b$-function of an ideal: for upper bounds we use invariant differential operators, while for lower bounds we show how non-vanishing of local cohomology can be used to exhibit roots of the $b$-functions. These methods allow us to compute the $b$-function for sub-maximal Pfaffians, and to bound from above the $b$-function for maximal minors. In Section~\ref{sec:bsatomaxlminors} we employ the $\SL_n$-symmetry in the definition of the $b$-function of maximal minors in order to show that the upper bound obtained in Section~\ref{sec:boundingbfun} is in fact sharp. In Section~\ref{sec:SMC} we give a quick derivation, based on our main results, of the Strong Monodromy Conjecture for maximal minors and sub-maximal Pfaffians.

\subsection*{Notation and conventions}

We write $[N]$ for the set $\{1,\cdots,N\}$, and for $k\leq N$ we let ${[N]\choose k}$ denote the collection of $k$-element subsets of $[N]$. Throughout the paper, $X=\bb{A}^N$ is an affine space, and $S=\bb{C}[x_1,\cdots,x_N]$ denotes the coordinate ring of $X$. We write $\D_X$ or simply $\D$ for the \defi{Weyl algebra} of differential operators on $X$: $\D_X=S[\p_1,\cdots,\p_n]$ where $\p_i=\frac{\p}{\p x_i}$. In order to distinguish between the various kinds of tuples that arise in this paper, we will try as much as possible to stick to the following conventions: we write
\begin{itemize}
 \item $f=(f_1,\cdots,f_r)\in S^r$ for a tuple of polynomials in $S$.
 \item $\hat{c}=(c_1,\cdots,c_r)\in\bb{Z}^r$ for a tuple of integers indexing the operators in the definition of $b$-functions.
 \item $\ul{s}=(s_1,\cdots,s_r)$ for a tuple of independent variables used to define $b$-functions.
 \item $\ul{\a}=(\a_1,\cdots,\a_r)\in\bb{Z}^r$ when $\a_i$ are exponents which arise as specializations of the variables $s_i$.
 \item $\ll=(\ll_1,\cdots,\ll_r)\in\bb{Z}^r$ for a dominant weight or partition.
\end{itemize}

\section{Preliminaries}\label{sec:preliminaries}

\subsection{Representation Theory}\label{subsec:repthy}
We consider the group $\GL_N=\GL_N(\bb{C})$ of invertible $N\times N$ complex matrices, and denote by $T_N$ the maximal torus of diagonal matrices. We will refer to $N$--tuples $\ll=(\ll_1,\cdots,\ll_N)\in\bb{Z}^N$ as \defi{weights} of $T_N$ and write $|\ll|$ for the total size $\ll_1+\cdots+\ll_N$ of $\ll$. We say that $\ll$ is a \defi{dominant weight} if $\ll_1\geq\ll_2\geq\cdots\geq\ll_N$ and denote the collection of dominant weights by~$\bb{Z}^N_{\dom}$. A dominant weight with $\ll_N\geq 0$ is a \defi{partition}, and we write $\mc{P}^N$ for the set of partitions in $\bb{Z}^N_{\dom}$. We will implicitly identify $\mc{P}^{N-1}$ with a subset of $\mc{P}^N$ by setting $\ll_N=0$ for any $\ll\in\mc{P}^{N-1}$. For $0\leq k\leq N$ and $a\geq 0$ we write $(a^k)$ for the partition $\ll\in\mc{P}^k\subset\mc{P}^N$ with $\ll_1=\cdots=\ll_k=a$. Irreducible rational representations of $\GL_N(\bb{C})$ are in one-to-one correspondence with dominant weights $\ll$. We denote by $S_{\ll}\bb{C}^N$ the irreducible representation associated to~$\ll$, often referred to as a \defi{Schur functor}, and note that $S_{(1^k)}\bb{C}^N=\bw^k\bb{C}^N$ is the $k$-th exterior power of $\bb{C}^N$ for every $0\leq k\leq N$. When $m\geq n$, we have \defi{Cauchy's formula} \cite[Cor.~2.3.3]{weyman}

\begin{equation}\label{eq:Cauchy}
\Sym(\bb{C}^m\oo\bb{C}^n)=\bigoplus_{\ll\in\mc{P}^n} S_{\ll}\bb{C}^m\oo S_{\ll}\bb{C}^n.
\end{equation}
If we identify $\bb{C}^m\oo\bb{C}^n$ with the linear forms on the space $X=X_{m\times n}$ of $m\times n$ complex matrices, then (\ref{eq:Cauchy}) is precisely the decomposition into irreducible $\GL_m\times\GL_n$ representations of the coordinate ring of~$X$. For a partition $\ll$ we write $\ll^{(2)}=(\ll_1,\ll_1,\ll_2,\ll_2,\cdots)$ for the partition obtained by repeating each part of $\ll$ twice. The skew-symmetric version of Cauchy's formula \cite[Prop.~2.3.8(b)]{weyman} yields
\begin{equation}\label{eq:skewCauchy}
\Sym\left(\bw^2\bb{C}^{2n+1}\right)=\bigoplus_{\ll\in\mc{P}^n} S_{\ll^{(2)}}\bb{C}^{2n+1}.
\end{equation}
If we identify $\bw^2\bb{C}^{2n+1}$ with the linear forms on the space $X=X_n$ of $(2n+1)\times(2n+1)$ skew-symmetric matrices, then (\ref{eq:skewCauchy}) describes the decomposition into irreducible $\GL_{2n+1}$-representations of the coordinate ring of $X$.

\subsection{Invariant operators and $\D$-modules}\label{subsec:invDmods}

Throughout this paper we will be studying various (left) $\D_X$-modules when $X$ is a finite dimensional representation of some connected reductive linear algebraic group~$G$. Differentiating the $G$-action on $X$ yields a map from the Lie algebra $\mf{g}$ into the vector fields on $X$, which in turn induces a map
\begin{equation}\label{eq:deftau}
 \tau:\mc{U}(\mf{g})\lra\D_X,
\end{equation}
where $\mc{U}(\mf{g})$ denotes the universal enveloping algebra of $\mf{g}$. In particular, any $\D_X$-module $\mc{M}$ inherits via $\tau$ the structure of a $\mf{g}$-representation: if $g\in\mf{g}$ and $m\in\mc{M}$ then $g\cdot m=\tau(g)\cdot m$. In order to make the action of $\D_X$ on $\mc{M}$ compatible with the $\mf{g}$-action we need to consider the action of $\mf{g}$ on $\D_X$ given by
\begin{equation}\label{eq:bulactiong}
g\bul p = \tau(g)\cdot p - p\cdot\tau(g)\rm{ for }g\in\mf{g}\rm{ and }p\in\D_X.
\end{equation}
The induced Lie algebra action of $\mf{g}$ on the tensor product $\D_X\oo\mc{M}$ makes the multiplication $\D_X\oo\mc{M}\to\mc{M}$ into a $\mf{g}$-equivariant map: $g\cdot(p\cdot m)=(g\bul p)\cdot m+p\cdot(g\cdot m)$ for $g\in\mf{g}$, $p\in\D_X$, $m\in\mc{M}$.

We also use the symbol $\bul$ to avoid a possible source of confusion that may arise as follows. Since $S$ is both a $\D_X$-module and a subset of $\D_X$, the multiplication of an element $p\in\D_X$ with an element $f\in S$ can have two meanings: we write $p\bul f$ for the result of applying the operator $p$ to $f$, and $p\cdot f$ for the multiplication of $p$ with $f$ inside $\D_X$. The operation $p\bul f$ is only used twice in our paper: to discuss the pairing between differential operators and polynomials (see (\ref{eq:dualbases})), and in Section~\ref{subsec:bsatoideal} when we refer to $\p_i\bul f_j$, the $i$-th partial derivative of $f_j$.

For a Lie subalgebra $\mf{a}\subset\mf{g}$, and a $\D_X$-module $\mc{M}$, we consider the collection $\mc{M}^{\mf{a}}$ of \defi{$\mf{a}$-invariant sections in~$\mc{M}$}:
\[\mc{M}^{\mf{a}}=\{m\in\mc{M}:\tau(a)\cdot m=0\rm{ for all }a\in\mf{a}\}.\]
The main examples that we study arise from a tuple $f=(f_1,\cdots,f_r)\in S^r$ of polynomial functions on $X$, where each $f_i$ is $\mf{a}$-invariant, and $\mc{M}=S_{f_1\cdots f_r}$ is the localization of $S$ at the product $f_1\cdots f_r$. In this case we have that $\mc{M}^{\mf{a}}=(S_{f_1\cdots f_r})^{\mf{a}}$ coincides with $(S^{\mf{a}})_{f_1\cdots f_r}$, the localization of $S^{\mf{a}}$ at $f_1\cdots f_r$.

The \defi{ring of $\mf{a}$-invariant differential operators on $X$}, denoted by $\D_X^{\mf{a}}$ (not to be confused with $\mc{M}^{\mf{a}}$ for $\mc{M}=\D_X$ as defined above), are defined via
\begin{equation}\label{eq:defDXs}
\D_X^{\mf{a}}=\{p\in\D_X:a\bul p=0\rm{ for all }a\in\mf{a}\}, 
\end{equation}
and $\mc{M}^{\mf{a}}$ is a $\D_X^{\mf{a}}$-module whenever $\mc{M}$ is a $\D_X$-module. If we write $\mc{ZU}(\mf{a})$ for the center of $\mc{U}(\mf{a})$ then it follows from (\ref{eq:bulactiong}) and (\ref{eq:defDXs}) that
\begin{equation}\label{eq:tauZUgsubD^g}
 \tau\left(\mc{ZU}(\mf{a})\right)\subseteq\D_X^{\mf{a}}.
\end{equation}
An alternative way of producing $\mf{a}$-invariant differential operators is as follows. Let $P=\bb{C}[\p_1,\cdots,\p_N]$ and write $S_k$ (resp. $P_k$) for the subspace of $S$ (resp. $P$) of homogeneous elements of degree $k$. The action of $P$ on $S$ by differentiation induces $\mf{a}$-equivariant perfect pairings $\scpr{\ }{\ }:P_k\times S_k\to\bb{C}$ for each $k\geq 0$, namely $\scpr{w}{v}=w\bul v$. If $V\subset S_k$, $W\subset P_k$ are dual $\mf{a}$-subrepresentations, with (almost dual) bases $v=(v_1,\cdots,v_t)$ and $w=(w_1,\cdots,w_t)$, such that for some non-zero constant $c$
\begin{equation}\label{eq:dualbases}
 \scpr{w_i}{v_j}=0\rm{ for }i\neq j,\quad\scpr{w_i}{v_i}=c\rm{ for all }i,
\end{equation}
then we can define elements of $\D_X^{\mf{a}}$ via
\begin{equation}\label{eq:defDp}
D_{v,w}=\sum_{i=1}^t v_i\cdot w_i,\quad D_{w,v}=\sum_{i=1}^t w_i\cdot v_i.
\end{equation}
In the examples that we consider, the basis $w$ will have a very simple description in terms of $v$. For $p=p(x_1,\cdots,x_N)\in S$, we define $p^*=p(\p_1,\cdots,\p_N)\in P$. For the tuples of maximal minors and sub-maximal Pfaffians, it will suffice to take $w_i=v_i^*$ in order for (\ref{eq:dualbases}) to be satisfied, in which case we'll simply write $D_v$ instead of $D_{w,v}$ and $D_{v^*}$ or $D_w$ instead of $D_{v,w}$.

We specialize our discussion to the case when $X=X_{m,n}$ is the vector space of $m\times n$ matrices, $m\geq n$, and $G=\GL_m\times\GL_n$, $\mf{g}=\mf{gl}_m\oplus\mf{gl}_n$. The coordinate ring of $X$ is $S=\bb{C}[x_{ij}]$ with $i\in[m]$, $j\in[n]$. We consider the tuple of maximal minors $d=(d_K)_{K\in{[m]\choose n}}$ of the generic matrix of indeterminates $(x_{ij})$, where
\begin{equation}\label{eq:defdK}
d_K=\det(x_{ij})_{i\in K,j\in[n]},
\end{equation}
and the tuple $\p=(\p_K)_{K\in{[m]\choose n}}$ of maximal minors in the dual variables
\begin{equation}\label{eq:defpK}
\p_K=d_K^*=\det(\p_{ij})_{i\in K,j\in[n]},
\end{equation}
The elements $d_K$ form a basis for the irreducible representation $V=\bw^n\bb{C}^m\oo\bw^n\bb{C}^n$ in (\ref{eq:Cauchy}), indexed by the partition $\ll=(1^n)$, while $\p_K$ form a basis for the dual representation $W$. If we let $c=n!$ then it follows from Cayley's identity \cite[(1.1)]{css} that (\ref{eq:dualbases}) holds for the tuples $d$ and $\p$, so we get $\mf{g}$-invariant operators
\begin{equation}\label{eq:defDd}
 D_{\p}=\sum_{K\in{[m]\choose n}}d_K\cdot\p_K,\quad D_{d}=\sum_{K\in{[m]\choose n}}\p_K\cdot d_K.
\end{equation}

If we consider $\mf{sl}_n\subset\mf{gl}_n\subset\mf{g}$, the special linear Lie algebra of $n\times n$ matrices with trace $0$, then
\[S^{\mf{sl}_n}=\bb{C}\left[d_K:K\in{[m]\choose n}\right]\]
is the $\bb{C}$-subalgebra of $S$ generated by the maximal minors $d_K$. Moreover, $S^{\mf{sl}_n}$ can be identified with the homogeneous coordinate ring of the Grassmannian $\bb{G}(n,m)$ of $n$-planes in $\bb{C}^m$. We let
\begin{equation}\label{eq:defpij}
 p_0=d_{[n]},\rm{ and }p_{ij}=d_{[n]\backslash\{i\}\cup\{j\}},\rm{ for }i\in [n], j\in [m]\setminus [n],
\end{equation}
and note that $p_{ij}/p_0$ give the coordinates on the open Schubert cell defined by $p_0\neq 0$ inside $\bb{G}(n,m)$. It follows that if we take any $K\in{[m]\choose n}$, set $|[n]\setminus K|=k$, and enumerate the elements of the sets $[n]\backslash K=\{i_1,\cdots, i_k\}$ and $K\backslash [n]=\{j_1,\cdots,j_k\}$ in increasing order, then:
\begin{equation}\label{eq:rel}
d_K=p_0^{1-k}\cdot\det \begin{pmatrix}
p_{i_1j_1} & \cdots & p_{i_1j_k}\\
\vdots & \ddots & \vdots\\
p_{i_kj_1} & \cdots & p_{i_kj_k}
\end{pmatrix}.
\end{equation}
It will be important in Section~\ref{sec:bsatomaxlminors} to note moreover that $p_0,p_{ij}$ are algebraically independent and that 
\begin{equation}\label{eq:slninvariantsS_p}
 \left\{p_0^{c_0}\cdot\prod_{i\in[n],j\in[m]\setminus[n]}p_{ij}^{c_{ij}}:c_0,c_{ij}\in\bb{Z}\right\}\rm{ forms a }\bb{C}\rm{-basis of }\left(S_{p_0\cdot\prod_{i,j}p_{ij}}\right)^{\mf{sl}_n}.
\end{equation}

\subsection{Capelli elements, eigenvalues, and the Fourier transform {\cite{howe-umeda}}}\label{subsec:CapelliFourier}

Throughout the paper, by the determinant of a matrix $A=(a_{ij})_{i,j\in[r]}$ with non-commuting entries we mean the \defi{column-determinant}: if $\mc{S}_r$ is the symmetric group of permutations of $[r]$, and $\sgn$ denotes the signature of a permutation, then
\begin{equation}\label{eq:coldet}
\coldet(a_{ij})=\sum_{\s\in\mc{S}_r}\sgn(\s)\cdot a_{\s(1)1}\cdot a_{\s(2)2}\cdots a_{\s(n)n}. 
\end{equation}

We consider the Lie algebra $\mf{gl}_r$ and choose a basis $\{\mc{E}_{ij}:i,j\in[r]\}$ for it, where $\mc{E}_{ij}$ is the matrix whose only non-zero entry is in row $i$, column $j$, and it is equal to one. We think of $\mc{E}_{ij}$ as the inputs of an $r\times r$ matrix $\mc{E}$ with entries in $\mc{U}(\mf{gl}_r)$. We consider an auxiliary variable $z$, consider the diagonal matrix $\Delta=\diag(r-1-z,r-2-z,\cdots,1-z,-z)$ and define the polynomial $\mc{C}(z)\in\mc{U}(\mf{gl}_r)[z]$ using notation (\ref{eq:coldet}):
\begin{equation}\label{eq:defC(z)}
 \mc{C}(z)=\coldet(\mc{E}+\Delta).
\end{equation}
For $a\geq 0$ we write
\begin{equation}\label{eq:defza}
 [z]_a=z(z-1)\cdots(z-a+1)
\end{equation}
and define elements $\mc{C}_a\in\mc{U}(\mf{gl}_r)$, $a=0,\cdots,r$, by expanding the polynomial $\mc{C}(z)$ into a linear combination
\begin{equation}\label{eq:defCi}
 \mc{C}(z)=\sum_{a=0}^r (-1)^{r-a}\mc{C}_a\cdot[z]_{r-a}.
\end{equation}
In the case when $r=2$ we obtain
\[\mc{C}(z)=\coldet\begin{bmatrix}
 \mc{E}_{11}+1-z & \mc{E}_{12} \\
 \mc{E}_{21} & \mc{E}_{22}-z
\end{bmatrix} =
[z]_2-(\mc{E}_{11}+\mc{E}_{22})\cdot[z]+((\mc{E}_{11}+1)\cdot\mc{E}_{22}-\mc{E}_{21}\cdot\mc{E}_{12}),\textrm{ thus}
\]
\[\mc{C}_0=1,\quad\mc{C}_1=\mc{E}_{11}+\mc{E}_{22},\quad\mc{C}_2=(\mc{E}_{11}+1)\cdot\mc{E}_{22}-\mc{E}_{21}\cdot\mc{E}_{12}.\]
The elements $\mc{C}_a$, $a=1,\cdots,r$ are called \defi{the Capelli elements} of $\mc{U}(\mf{gl}_r)$, and $\mc{ZU}(\mf{gl}_r)$ is a polynomial algebra with generators $\mc{C}_1,\cdots,\mc{C}_r$. For $\ll\in\bb{Z}^r_{\dom}$, let $V_{\ll}$ denote an irreducible $\mf{gl}_r$-representation of highest weight $\ll$, and pick $v_{\ll}\in V_{\ll}$ to be a highest weight vector in $V_{\ll}$, so that
\begin{equation}\label{eq:hwvll}
\mc{E}_{ii}\cdot v_{\ll}=\ll_i\cdot v_{\ll},\quad \mc{E}_{ij}\cdot v_{\ll}=0\rm{ for }i<j. 
\end{equation}
Since $\mc{C}_a$ are central, their action on $V_{\ll}$ is by scalar multiplication, and the scalar (called the \defi{eigenvalue of $\mc{C}_a$ on $V_{\ll}$}) can be determined by just acting on $v_{\ll}$. To record this action more compactly, we will consider how $\mc{C}(z)$ acts on $v_{\ll}$. Expanding $\mc{C}(z)$ via (\ref{eq:coldet}), it follows from (\ref{eq:hwvll}) that the only term that doesn't annihilate $v_{\ll}$ is the product of diagonal entries in the matrix $\mc{E}+\Delta$, hence
\begin{equation}\label{eq:actionC(z)onVlam}
\mc{C}(z)\rm{ acts on }V_{\ll}[z]\rm{ by multiplication by }\prod_{i=1}^r(\ll_i+r-i-z). 
\end{equation}

We can think of $\mc{U}(\mf{gl}_r)$ in terms of generators and relations as follows: it is generated as a $\bb{C}$-algebra by $\mc{E}_{ij}$, $i,j\in[r]$, subject to the relations
\begin{equation}\label{eq:relsUglr}
[\mc{E}_{ij},\mc{E}_{kl}]=\d_{jk}\cdot\mc{E}_{il}-\d_{il}\cdot\mc{E}_{kj}, 
\end{equation}
where $[a,b]=ab-ba$ denotes the usual commutator, and $\d$ is the Kronecker delta function. For every complex number $u\in\bb{C}$, the substitutions
\[\mc{E}_{ij}\lra -\mc{E}_{ji}\rm{ for }i\neq j,\quad\mc{E}_{ii}\lra -\mc{E}_{ii}-u\]
preserve (\ref{eq:relsUglr}), so they define an involution $\mc{F}_u:\mc{U}(\mf{gl}_r)\lra\mc{U}(\mf{gl}_r)$ which we call \defi{the Fourier transform with parameter $u$}. We can apply $\mc{F}_u$ to $\mc{C}(z)$ and obtain
\begin{equation}\label{eq:FourierC(z)}
\mc{F}_u\mc{C}(z)=\coldet(-\mc{E}^t-u\cdot\Id_r+\Delta), 
\end{equation}
where $\mc{E}^t$ is the transpose of $\mc{E}$, and $\Id_r$ denotes the $r\times r$ identity matrix. The Fourier transforms $\mc{F}_u\mc{C}_1,\cdots,\mc{F}_u\mc{C}_r$ of the Capelli elements form another set of polynomial generators for $\mc{ZU}(\mf{gl}_r)$, hence they act by scalar multiplication on any irreducible $\mf{gl}_r$-representation $V_{\ll}$. To determine the scalars, we will consider the action on a lowest weight vector $w_{\ll}\in V_{\ll}$, so that
\begin{equation}\label{eq:lwvll}
\mc{E}_{ii}\cdot w_{\ll}=\ll_{r+1-i}\cdot w_{\ll},\quad \mc{E}_{ji}\cdot w_{\ll}=0\rm{ for }i<j.
\end{equation}
Expanding (\ref{eq:FourierC(z)}) via (\ref{eq:coldet}), it follows from (\ref{eq:lwvll}) that the action of $\mc{F}_u\mc{C}_a$ on $V_{\ll}$ is encoded by the fact that
\begin{equation}\label{eq:actionFC(z)onVlam}
\mc{F}_u\mc{C}(z)\rm{ acts on }V_{\ll}[z]\rm{ by multiplication by }\prod_{i=1}^r(-\ll_{r+1-i}-u+r-i-z).
\end{equation}

\begin{lemma}\label{lem:Fs^r}
 For $s\in\bb{Z}$, let $\ll=(s^r)$ denote the dominant weight with all $\ll_i=s$, and for $a=1,\cdots,r$ let $P_a(s)$ (resp. $\mc{F}_uP_a(s)$) denote the eigenvalue of $\mc{C}_a$ (resp. $\mc{F}_u\mc{C}_a$) on $V_{\ll}$. We have that $P_a(s)$ and $\mc{F}_uP_a(s)$ are polynomial functions in $s$, and as such $\mc{F}_uP_a(s)=P_a(-s-u)$.
\end{lemma}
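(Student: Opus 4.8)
The plan is to read off $P_a(s)$ and $\mathcal{F}_uP_a(s)$ as the coordinates of $\mathcal{C}(z)$, respectively $\mathcal{F}_u\mathcal{C}(z)$, in the $\mathbb{C}$-basis $[z]_0,[z]_1,\dots,[z]_r$ of the space of polynomials in $z$ of degree $\leq r$, and then to compare the two resulting expansions after a change of variable.

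First I would record the two scalar actions in this basis. On $V_\lambda$ with $\lambda=(s^r)$, the operator $\mathcal{C}(z)$ acts by the scalar $\prod_{i=1}^r(s+r-i-z)$ by (\ref{eq:actionC(z)onVlam}); on the other hand, each $\mathcal{C}_a$ acts on $V_{(s^r)}$ by $P_a(s)$, so expanding $\mathcal{C}(z)$ via (\ref{eq:defCi}) shows that $\mathcal{C}(z)$ acts by $\sum_{a=0}^r(-1)^{r-a}P_a(s)[z]_{r-a}$. Equating these two polynomials in $z$ gives
\[\prod_{i=1}^r(s+r-i-z)=\sum_{a=0}^r(-1)^{r-a}P_a(s)\cdot[z]_{r-a}.\]
Since $[z]_0,\dots,[z]_r$ are a basis, the scalars $(-1)^{r-a}P_a(s)$ are obtained from the coefficients of $\prod_{i=1}^r(s+r-i-z)$, viewed as a polynomial in $z$, by a fixed ($s$-independent) invertible linear transformation; as those coefficients are manifestly polynomial in $s$, each $P_a(s)$ is a polynomial in $s$ (in fact of degree $\leq a$). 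Consequently the displayed identity, which holds for every integer $s$, holds verbatim in $\mathbb{C}[z]$ when $s$ is replaced by any $t\in\mathbb{C}$.

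Next I would run the same computation for $\mathcal{F}_u\mathcal{C}(z)$. Applying $\mathcal{F}_u$ (which fixes $z$) to (\ref{eq:defCi}) gives $\mathcal{F}_u\mathcal{C}(z)=\sum_{a=0}^r(-1)^{r-a}(\mathcal{F}_u\mathcal{C}_a)\cdot[z]_{r-a}$, and each $\mathcal{F}_u\mathcal{C}_a$ is central, hence acts on $V_{(s^r)}$ by the scalar $\mathcal{F}_uP_a(s)$. Combining this with (\ref{eq:actionFC(z)onVlam}), which for $\lambda=(s^r)$ says that $\mathcal{F}_u\mathcal{C}(z)$ acts on $V_{(s^r)}[z]$ by $\prod_{i=1}^r(-s-u+r-i-z)$, we obtain
\[\prod_{i=1}^r(-s-u+r-i-z)=\sum_{a=0}^r(-1)^{r-a}\mathcal{F}_uP_a(s)\cdot[z]_{r-a}.\]
But substituting $t=-s-u$ into the identity from the previous paragraph yields $\prod_{i=1}^r(-s-u+r-i-z)=\sum_{a=0}^r(-1)^{r-a}P_a(-s-u)\cdot[z]_{r-a}$. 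Comparing coefficients in the basis $[z]_0,\dots,[z]_r$ forces $\mathcal{F}_uP_a(s)=P_a(-s-u)$ for all $a$; this is a polynomial in $s$ because $P_a$ is, which finishes the argument.

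I do not anticipate a genuine obstacle: the proof is the linear-algebra remark that the falling factorials $[z]_k$ form a basis, together with careful bookkeeping of (\ref{eq:actionC(z)onVlam}) and (\ref{eq:actionFC(z)onVlam}). The only point that needs a moment's attention is the passage from ``identity for all integers $s$'' to ``identity of polynomials in the formal variable $t$'', which is what legitimizes the substitution $t\mapsto -s-u$; this is why establishing the polynomiality of $P_a$ must come before the comparison.
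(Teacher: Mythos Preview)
Your argument is correct and is essentially the same as the paper's: both compute the eigenvalue of $\mc{C}(z)$ on $V_{(s^r)}$ as $\prod_{i=1}^r(s+r-i-z)$, expand in the basis $[z]_0,\dots,[z]_r$ to see that each $P_a(s)$ is polynomial, and then compare with the analogous expansion for $\mc{F}_u\mc{C}(z)$ coming from (\ref{eq:actionFC(z)onVlam}) to conclude $\mc{F}_uP_a(s)=P_a(-s-u)$. The only difference is presentational---the paper packages the two expansions into generating polynomials $P(s,z)$ and $\mc{F}_uP(s,z)$, while you work with the coefficients directly and are a bit more explicit about why the identity for integer $s$ extends to a polynomial identity.
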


\begin{proof}
 If we let $P(s,z)=\sum_{a=0}^r (-1)^{r-a}P_a(s)\cdot[z]_{r-a}$ then it follows from (\ref{eq:defCi}) and (\ref{eq:actionC(z)onVlam}) that
 \[P(s,z)=\prod_{i=1}^r(s+r-i-z).\]
 Expanding the right hand side as a linear combination of $[z]_0,[z]_1,\cdots,[z]_r$ shows that $P_a(s)$ is a polynomial in $s$. We define $\mc{F}_uP(s,z)$ by replacing $P_a(s)$ with $\mc{F}_uP_a(s)$ and obtain using (\ref{eq:actionFC(z)onVlam}) that
 \[\mc{F}_uP(s,z)=\prod_{i=1}^r(-s-u+r-i-z).\]
 Since $\mc{F}_uP(s,z)=P(-s-u,z)$, the conclusion follows.
\end{proof}

\begin{lemma}\label{lem:Fs^r-1}
 For $s\in\bb{Z}_{\geq 0}$, let $\ll=(s^{r-1})$ denote the partition with $\ll_1=\cdots=\ll_{r-1}=s$, $\ll_r=0$, and for $a=1,\cdots,r$ let $Q_a(s)$ (resp. $\mc{F}_{r-1}Q_a(s)$) denote the eigenvalue of $\mc{C}_a$ (resp. $\mc{F}_{r-1}\mc{C}_a$) on $V_{\ll}$. We have that $Q_a(s)$ and $\mc{F}_{r-1}Q_a(s)$ are polynomial functions in $s$, and as such $\mc{F}_{r-1}Q_a(s)=Q_a(-s-r)$.
\end{lemma}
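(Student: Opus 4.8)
The plan is to mimic the proof of Lemma~\ref{lem:Fs^r} essentially verbatim, now taking the partition $\ll=(s^{r-1})$ with $\ll_r=0$ and the Fourier parameter $u=r-1$. First I would set
\[
Q(s,z)=\sum_{a=0}^r(-1)^{r-a}Q_a(s)\cdot[z]_{r-a},
\]
and combine (\ref{eq:defCi}) with (\ref{eq:actionC(z)onVlam}) to obtain
\[
Q(s,z)=\prod_{i=1}^r(\ll_i+r-i-z)=(-z)\cdot\prod_{i=1}^{r-1}(s+r-i-z),
\]
where the factor $-z$ is precisely the $i=r$ contribution $\ll_r+r-r-z$ produced by the vanishing coordinate $\ll_r=0$. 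Expanding the right-hand side in the basis $[z]_0,[z]_1,\dots,[z]_r$ of polynomials of degree at most $r$ in $z$ exhibits each $Q_a(s)$ as a polynomial function of $s$.

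Next I would compute the Fourier transform by the same device. Writing $\mc{F}_{r-1}Q(s,z)=\sum_{a=0}^r(-1)^{r-a}\mc{F}_{r-1}Q_a(s)\cdot[z]_{r-a}$ and invoking (\ref{eq:actionFC(z)onVlam}) with $u=r-1$ gives
\[
\mc{F}_{r-1}Q(s,z)=\prod_{i=1}^r(-\ll_{r+1-i}-(r-1)+r-i-z).
\]
The $i=1$ term equals $-z$ again, since $\ll_r=0$ and $-(r-1)+r-1=0$, while for $2\le i\le r$ the term uses $\ll_{r+1-i}=s$ and simplifies to $-(s+(i-1)+z)$; after the substitution $j=i-1$ this becomes $\mc{F}_{r-1}Q(s,z)=(-z)\cdot\prod_{j=1}^{r-1}(-(s+j+z))$.

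Finally I would match this against $Q(-s-r,z)$: substituting $s\mapsto -s-r$ in the product formula for $Q$ and reindexing the remaining factors shows $Q(-s-r,z)=(-z)\cdot\prod_{j=1}^{r-1}(-(s+j+z))$ as well, so $\mc{F}_{r-1}Q(s,z)=Q(-s-r,z)$ as polynomials in $z$. Since $[z]_0,\dots,[z]_r$ are linearly independent, comparing coefficients yields $\mc{F}_{r-1}Q_a(s)=Q_a(-s-r)$ for every $a$, which is the assertion.

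I do not expect any genuine obstacle; the content is bookkeeping of the same flavor as Lemma~\ref{lem:Fs^r}. The one point that deserves attention is that the value $u=r-1$ is forced: it is exactly the choice for which the zero coordinate $\ll_r=0$ contributes the identical factor $-z$ to the eigenvalue of $\mc{C}(z)$ (in the $i=r$ slot) and to the eigenvalue of $\mc{F}_{r-1}\mc{C}(z)$ (in the $i=1$ slot), and for any other $u$ the clean symmetry would break. Aside from that, the only place an error could slip in is in keeping the two reindexings of the length-$(r-1)$ products aligned, so I would carry those out explicitly.
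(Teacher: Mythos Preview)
Your proposal is correct and follows essentially the same approach as the paper: define $Q(s,z)$ and $\mc{F}_{r-1}Q(s,z)$, evaluate them via (\ref{eq:actionC(z)onVlam}) and (\ref{eq:actionFC(z)onVlam}), and verify directly that $\mc{F}_{r-1}Q(s,z)=Q(-s-r,z)$. Your added remark explaining why $u=r-1$ is the forced choice is a nice clarification not made explicit in the paper.
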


\begin{proof}
 We define $Q(s,z)$ and $\mc{F}_{r-1}Q(s,z)$ as in the proof of Lemma~\ref{lem:Fs^r} and obtain using (\ref{eq:actionC(z)onVlam}), (\ref{eq:actionFC(z)onVlam}) that
\[Q(s,z)=\left(\prod_{i=1}^{r-1}(s+r-i-z)\right)\cdot(0-z)=(s+r-1-z)\cdot(s+r-2-z)\cdots(s+1-z)\cdot(-z),\]
\[
\begin{aligned}
\mc{F}_{r-1}Q(s,z)&=(-0-(r-1)+r-1-z)\cdot\left(\prod_{i=2}^r(-s-(r-1)+r-i-z)\right) \\
&=(-z)\cdot(-s-1-z)\cdot(-s-2-z)\cdots(-s-r+1-z).
\end{aligned}
\]
It is immediate to check that $\mc{F}_{r-1}Q(s,z)=Q(-s-r,z)$, from which the conclusion follows.
\end{proof}

\subsection{A little linear algebra}\label{subsec:linalg}

We let $X_{m,n}$, $m\geq n$, denote the vector space of $m\times n$ matrices, write $Z_{m,n}$ for the subvariety of $X_{m,n}$ consisting of matrices of rank at most $n-1$, and let $U\subset X_{m,n}$ denote the open affine subset consisting of matrices $u=(u_{ij})$ with $u_{11}\neq 0$.

\begin{lemma}\label{lem:localizemxnmat}
 There exists an isomorphism of algebraic varieties
\[\pi:U\cap Z_{m,n}\lra \bb{C}^*\times\bb{C}^{m-1}\times\bb{C}^{n-1}\times Z_{m-1,n-1}.\]
\end{lemma}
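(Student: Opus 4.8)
The plan is to build the isomorphism $\pi$ explicitly via Gaussian-elimination-style row and column operations, using $u_{11}$ as a pivot. First I would send a matrix $u=(u_{ij})\in U$ to the data consisting of $t:=u_{11}\in\bb{C}^*$, the first column below the pivot $(u_{21},\dots,u_{m1})\in\bb{C}^{m-1}$, the first row to the right of the pivot $(u_{12},\dots,u_{1n})\in\bb{C}^{n-1}$, and the ``Schur complement'' $u' := \left(u_{ij}-u_{i1}u_{1j}/u_{11}\right)_{2\le i\le m,\,2\le j\le n}\in X_{m-1,n-1}$. This map is clearly a morphism on $U$ (all coordinates are polynomials in the $u_{ij}$ and in $u_{11}^{-1}$), and it is invertible with inverse also a morphism: given $(t,a,b,u')$ one reconstructs $u$ by setting $u_{11}=t$, $u_{i1}=a_i$, $u_{1j}=b_j$, and $u_{ij}=u'_{ij}+a_ib_j/t$ for $i,j\ge 2$. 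So on the level of the open set $U$ alone this is an isomorphism $U\cong\bb{C}^*\times\bb{C}^{m-1}\times\bb{C}^{n-1}\times X_{m-1,n-1}$.

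Next I would check that this isomorphism carries $U\cap Z_{m,n}$ onto $\bb{C}^*\times\bb{C}^{m-1}\times\bb{C}^{n-1}\times Z_{m-1,n-1}$, which amounts to the rank identity $\rank(u)=1+\rank(u')$ for $u\in U$. The clean way to see this is to note that clearing the first row and column by the elementary operations ``subtract $u_{i1}/u_{11}$ times row $1$ from row $i$'' and ``subtract $u_{1j}/u_{11}$ times column $1$ from column $j$'' is effected by left and right multiplication of $u$ by invertible matrices (lower- and upper-triangular unipotent, times the permutation needed only if one insists on a fixed pivot position — here the pivot is already at $(1,1)$, so no permutation is needed), and turns $u$ into the block matrix $\begin{pmatrix} u_{11} & 0 \\ 0 & u' \end{pmatrix}$. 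Since multiplication by invertible matrices preserves rank, $\rank(u)=\rank(u_{11})+\rank(u')=1+\rank(u')$ because $u_{11}\ne 0$. Hence $\rank(u)\le n-1$ if and only if $\rank(u')\le n-2=(n-1)-1$, i.e. $u\in Z_{m,n}\iff u'\in Z_{m-1,n-1}$, and the restriction of $\pi$ and of its inverse to these closed subvarieties are again morphisms.

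I do not expect a serious obstacle here; the only thing requiring a little care is bookkeeping with indices and confirming that the inverse is a morphism (it is, being given by polynomials with a single denominator $t$ that is a unit on the target). If one wants to avoid matrix-identity arguments entirely, the rank statement can instead be deduced directly from the formula $u_{ij}=u'_{ij}+u_{i1}u_{1j}/u_{11}$: the span of the rows of $u$ equals the span of row $1$ together with the rows of the padded matrix $\begin{pmatrix}0 & u'\end{pmatrix}$, and row $1$ is independent of the latter because it has nonzero first entry while the others have first entry $0$. Either way the content is the same, and I would present whichever is shorter. This lemma will presumably be the inductive engine behind the local-cohomology computations used to exhibit roots of $b_I(s)$ from below in Section~\ref{sec:boundingbfun}, reducing statements about $Z_{m,n}$ to statements about $Z_{m-1,n-1}$ up to a product with an affine space and a torus factor.
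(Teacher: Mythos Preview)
Your argument is correct and is essentially the same as the paper's: the paper defines $\pi(u)=(t,\vec c,\vec r,M)$ with $t=u_{11}$, $\vec c$ and $\vec r$ the first column and row (minus the pivot), and $M_{ij}=u_{11}u_{i+1,j+1}-u_{1,j+1}u_{i+1,1}$, which is exactly $u_{11}$ times your Schur complement entry $u'_{i+1,j+1}$; since $u_{11}$ is a unit on $U$ this rescaling is immaterial, and the paper then appeals to \cite{johnson} for the isomorphism and rank statements where you supply the Gaussian-elimination argument directly. One small correction of context: the lemma is not used to feed into local cohomology computations but rather, via (\ref{eq:bZxT=bZ}) and (\ref{eq:bZ=lcmbZi}), to obtain the divisibility $b_{Z_{m-1,n-1}}(s)\mid b_{Z_{m,n}}(s)$ in (\ref{eq:divinduction}).
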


\begin{proof}
 We define $\pi:U\to\bb{C}^*\times\bb{C}^{m-1}\times\bb{C}^{n-1}\times X_{m-1,n-1}$ via $\pi(u)=(t,\vec{c},\vec{r},M)$ where if $u=(u_{ij})$ then
 \[t=u_{11},\ \vec{c}=(u_{21},u_{31},\cdots,u_{m1}),\ \vec{r}=(u_{12},u_{13},\cdots,u_{1n}),\]
 \[\ M_{ij}=\det_{\{1,i+1\},\{1,j+1\}}\rm{ for }i\in[m-1],j\in[n-1],\]
 where $\det_{\{1,i+1\},\{1,j+1\}}=u_{11}\cdot u_{i+1,j+1}-u_{1,j+1}\cdot u_{i+1,1}$ is the determinant of the $2\times 2$ submatrix of $u$ obtained by selecting rows $1,i+1$ and columns $1,j+1$. It follows for instance from \cite[Section~3.4]{johnson} that the map $\pi$ is an isomorphism, and that it sends $U\cap Z_{m,n}$ onto $\bb{C}^*\times\bb{C}^{m-1}\times\bb{C}^{n-1}\times Z_{m-1,n-1}$, which yields the desired conclusion.
\end{proof}

We let $X_n$ denote the vector space of $(2n+1)\times(2n+1)$ skew-symmetric matrices, and define $Z_n\subset X_n$ to be the subvariety of matrices of rank at most $(2n-2)$. We let $U\subset X_n$ denote the open affine subset defined by matrices $(u_{ij})$ with $u_{12}\neq 0$.

\begin{lemma}\label{lem:localizeskewsym}
 There exists an isomorphism of algebraic varieties
\[\pi:U\cap Z_{n}\lra \bb{C}^*\times\bb{C}^{2n-1}\times\bb{C}^{2n-1}\times Z_{n-1}.\]
\end{lemma}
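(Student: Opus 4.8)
The plan is to imitate the proof of Lemma~\ref{lem:localizemxnmat}, pivoting on the $2\times 2$ skew-symmetric corner block instead of a $2\times 2$ minor. First I would write $u\in U$ in block form $u=\begin{pmatrix} A & B\\ -B^t & D\end{pmatrix}$, where $A=\begin{pmatrix} 0 & u_{12}\\ -u_{12} & 0\end{pmatrix}$ is the skew-symmetric upper-left $2\times 2$ block, $B$ is $2\times(2n-1)$, and $D$ is the skew-symmetric $(2n-1)\times(2n-1)$ lower-right block. Since $u_{12}\neq 0$, the matrix $A$ is invertible and $A^{-1}$ is again skew-symmetric, so the congruence by $P=\begin{pmatrix}\Id_2 & -A^{-1}B\\ 0 & \Id_{2n-1}\end{pmatrix}$ gives $P^t\,u\,P=\begin{pmatrix} A & 0\\ 0 & M\end{pmatrix}$, where $M=D+B^tA^{-1}B$ is the Schur complement. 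Because $(B^tA^{-1}B)^t=B^t(A^{-1})^tB=-B^tA^{-1}B$, the matrix $M$ is skew-symmetric of size $2n-1$, hence $M\in X_{n-1}$; in coordinates one finds that $u_{12}\cdot M_{ij}$ agrees up to sign with the $4\times 4$ Pfaffian of $u$ on rows and columns $\{1,2,i+2,j+2\}$, so the entries of $M$ are regular functions on $U$.

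Next I would define $\pi:U\to\bb{C}^*\times\bb{C}^{2n-1}\times\bb{C}^{2n-1}\times X_{n-1}$ by $\pi(u)=(t,\vec{b},\vec{c},M)$, where $t=u_{12}$, $\vec{b}=(u_{13},\cdots,u_{1,2n+1})$ and $\vec{c}=(u_{23},\cdots,u_{2,2n+1})$ are the two rows of $B$, and $M$ is the Schur complement above; by the previous paragraph this is a morphism of varieties. The inverse morphism sends a tuple $(t,\vec{b},\vec{c},M)$ to the skew-symmetric matrix $u$ with $u_{12}=t$, first two rows prescribed by $\vec{b},\vec{c}$, and lower-right block $D=M-B^tA^{-1}B$ reconstructed from the explicit formula $A^{-1}=\begin{pmatrix} 0 & -1/t\\ 1/t & 0\end{pmatrix}$; since $t\in\bb{C}^*$ this is again a morphism, and it is visibly a two-sided inverse of $\pi$, so $\pi$ is an isomorphism of $U$ onto $\bb{C}^*\times\bb{C}^{2n-1}\times\bb{C}^{2n-1}\times X_{n-1}$.

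Finally, since $P$ is invertible the congruence $P^t\,u\,P=A\oplus M$ preserves rank, so $\rank(u)=2+\rank(M)$. Therefore $u$ has rank at most $2n-2$ exactly when $M$ has rank at most $2(n-1)-2$, i.e. $u\in Z_n\iff M\in Z_{n-1}$, and $\pi$ restricts to the desired isomorphism $U\cap Z_n\lra\bb{C}^*\times\bb{C}^{2n-1}\times\bb{C}^{2n-1}\times Z_{n-1}$. I do not expect a genuine obstacle here: the only points needing attention are the observation that the Schur complement of a skew-symmetric matrix along a skew-symmetric invertible block is again skew-symmetric (so that $M$ really lands in $X_{n-1}$) and the routine bookkeeping of indices and signs when $\pi$ and its inverse are written out in coordinates — everything else is standard linear algebra parallel to Lemma~\ref{lem:localizemxnmat}.
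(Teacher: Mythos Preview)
Your proposal is correct and follows essentially the same approach as the paper: both define $\pi$ via $(u_{12},\text{first two rows},M)$ with $M$ the Schur complement of the invertible $2\times 2$ corner, and both verify the rank condition via a congruence block-diagonalizing $u$ into $A\oplus M$. The paper writes out the congruence matrix and the formula $M_{ij}=\Pf_{\{1,2,i+2,j+2\}}/u_{12}$ explicitly (which is exactly your $D+B^tA^{-1}B$), but the argument is the same.
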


\begin{proof}
 We define $\pi:U\to\bb{C}^*\times\bb{C}^{2n-1}\times\bb{C}^{2n-1}\times X_{n-1}$ via $\pi(u)=(t,\vec{c},\vec{r},M)$ where if $u=(u_{ij})$ then
 \[t=u_{12},\ \vec{c}=(u_{13},u_{14},\cdots,u_{1,2n+1}),\ \vec{r}=(u_{23},u_{24},\cdots,u_{2,2n+1}),\]
 \[\ M_{ij}=\frac{\Pf_{\{1,2,i+2,j+2\}}}{u_{12}}\rm{ for }1\leq i,j\leq 2n-1,\]
 where $\Pf_{\{1,2,i+2,j+2\}}$ is the Pfaffian of the $4\times 4$ principal skew-symmetric submatrix of $u$ obtained by selecting the rows and columns of $u$ indexed by $1,2,i+2$ and $j+2$. Since 
 \[M_{ij}=u_{i+2,j+2}-(u_{1,i+2}\cdot u_{2,j+2}-u_{1,j+2}\cdot u_{2,i+2})/u_{12}\]
 one can solve for $u_{i+2,j+2}$ in terms of the entries of $M,\vec{r},\vec{c}$ and $u_{12}$ in order to define the inverse of $\pi$, which is therefore an isomorphism. We consider the $(2n+1)\times(2n+1)$ matrix
 \[C=\left[
\begin{array}{cc|cccc}
0 & 1 & u_{23}/u_{12} & u_{24}/u_{12} & \cdots & u_{2,2n+1}/u_{12} \\
1/u_{12} & 0 & -u_{13}/u_{12} & -u_{14}/u_{12} & \cdots & -u_{1,2n+1}/u_{12} \\ \hline
0 & 0 & 1 & 0 & \cdots & 0\\
0 & 0 & 0 & 1 & \cdots & 0\\
\vdots & \vdots & \vdots & \vdots & \ddots & \vdots \\
0 & 0 & 0 & 0 & \cdots & 1\\
\end{array}
\right]  
\]
Writing $\vec{0}$ for zero row/column vectors of size $(2n-1)$, we have (see also \cite[Lemma~1.1]{joz-pra})
\[C^t\cdot u\cdot C = \left[\begin{array}{cc|c}
0 & -1 & \vec{0} \\
1 & 0 & \vec{0} \\ \hline
\vec{0} & \vec{0} & M \\
\end{array}
\right]  
\]
Since $\rank(u)=\rank(C^t\cdot u\cdot C)=\rank(M)+2$, it follows that $\pi$ sends $U\cap Z_{n}$ onto $\bb{C}^*\times\bb{C}^{2n-1}\times\bb{C}^{2n-1}\times Z_{n-1}$, so it restricts to the desired isomorphism.
\end{proof}

\subsection{The $b$-function of an affine scheme}\label{subsec:bsatoideal}

In this section we review the results and definitions from \cite{budur-mustata-saito} that are most relevant for our calculations. Let $X=\bb{A}^N$ be the $N$-dimensional affine space, and write $S=\bb{C}[x_1,\cdots,x_N]$ for the coordinate ring of $X$, and $\D_X=\bb{C}[x_1,\cdots,x_N,\p_1,\cdots,\p_N]$ ($\p_i=\frac{\p}{\p x_i}$) for the corresponding Weyl algebra of differential operators. For a collection $f=(f_1,\cdots,f_r)$ of non-zero polynomials in $S$, we consider a set of independent commuting variables $s_1,\cdots,s_r$, one for each $f_i$. We form the $\D_X[s_1,\cdots,s_r]$-module
\begin{equation}\label{eq:defBs}
B_{\ul s}^f=S_{f_1\cdots f_r}[s_1,\cdots,s_r]\cdot f^{\ul s}, 
\end{equation}
where $S_{f_1\cdots f_r}$ denotes the localization of $S$ at the product of the $f_i$'s, and $f^{\ul s}$ stands for the formal product $f_1^{s_1}\cdots f_r^{s_r}$. $B_{\ul s}^f$ is a free rank one $S_{f_1\cdots f_r}[s_1,\cdots,s_r]$-module with generator $f^{\ul s}$, which admits a natural action of $\D_X$: the partial derivatives $\p_i$ act on the generator $f^{\ul s}$ via
\[\p_i\cdot f^{\ul s} = \sum_{j=1}^r \frac{s_j\cdot(\p_i\bul f_j)}{f_j}\cdot f^{\ul s}.\]
Writing $s=s_1+\cdots+s_r$, the \defi{Bernstein--Sato polynomial} (or \defi{$b$-function}) $b_f(s)$ is the monic polynomial of the lowest degree in $s$ for which $b_f(s)\cdot f^{\ul s}$ belongs to the $\D_X[s_1,\cdots,s_r]$-submodule of $B_{\ul s}^f$ generated by all expressions
\[\hat{c}_{\ul s}\cdot\prod_{i=1}^r f_i^{s_i+c_i},\]
where $\hat{c}=(c_1,\cdots,c_r)$ runs over the $r$-tuples in $\bb{Z}^r$ with $c_1+\cdots+c_r=1$ (for short $|\hat{c}|=1$), and
\begin{equation}\label{eq:defchat}
\hat{c}_{\ul s}=\prod_{c_i<0} s_i\cdot(s_i-1)\cdots(s_i+c_i+1). 
\end{equation}
 Equivalently, $b_f(s)$ is the monic polynomial of lowest degree for which there exist a finite set of tuples $\hat{c}\in\bb{Z}^r$ with $|\hat{c}|=1$, and corresponding operators $P_{\hat{c}}\in\D_X[s_1,\cdots,s_r]$ such that
\begin{equation}\label{eq:genbfseqn}
 \sum_{\hat{c}} P_{\hat{c}}\cdot\hat{c}_{\ul s}\cdot\prod_{i=1}^r f_i^{s_i+c_i}=b_f(s)\cdot f^{\ul s}.
\end{equation}
 
Just as in the case $r=1$ (of a single hypersurface), $b_f(s)$ exists and is a polynomial whose roots are negative rational numbers. Moreover, $b_f(s)$ only depends on the ideal $I$ generated by $f_1,\cdots,f_r$, which is why we'll often write $b_I(s)$ instead of $b_f(s)$. Furthermore, if we let $Z\subset X$ denote the subscheme defined by $f_1,\cdots,f_r$, and if we define
\begin{equation}\label{eq:defbZ}
b_Z(s)=b_I(s-\rm{codim}_X(Z)) 
\end{equation}
then $b_Z(s)$ only depends on the affine scheme $Z$ and not on its embedding in an affine space. The polynomial $b_Z(s)$ is called the \defi{Bernstein--Sato polynomial of $Z$} (or the \defi{$b$-function of $Z$}), and is meant as a measure of the singularities of $Z$: the higher the degree of $b_Z(s)$, the worse are the singularities of $Z$. For instance, one has that $T$ is smooth if and only if $b_T(s)=s$. Moreover, it follows from \cite[Theorem~5]{budur-mustata-saito} that for any $Z$ and any smooth $T$ we have
\begin{equation}\label{eq:bZxT=bZ}
b_{Z\times T}(s)=b_Z(s). 
\end{equation}
It will be important to note also that if $Z$ is irreducible and $Z=Z_1\cup\cdots\cup Z_k$ is an open cover of $Z$ then
\begin{equation}\label{eq:bZ=lcmbZi}
b_Z(s)=\rm{lcm}\{b_{Z_i}(s):i=1,\cdots,k\}. 
\end{equation}
A modification of the above formula is shown in \cite{budur-mustata-saito} to hold even when $Z$ is reducible, and in fact can be used to define a $b$-function for not necessarily affine or irreducible schemes $Z$: this generality is not relevant for this article so we won't discuss it further. Combining (\ref{eq:bZxT=bZ}) and (\ref{eq:bZ=lcmbZi}) with the results and notation from Section~\ref{subsec:linalg}, we conclude that
\begin{equation}\label{eq:divinduction}
 b_{Z_{m-1,n-1}}(s) \rm{ divides } b_{Z_{m,n}}(s),\rm{ and }b_{Z_{n-1}}(s) \rm{ divides } b_{Z_n}(s).
\end{equation}

\section{Bounding the $b$-function}\label{sec:boundingbfun}

In this section we discuss some methods for bounding the $b$-function from above and below. As a consequence we obtain formulas for the $b$-function of the ideal of maximal minors of the generic $(n+1)\times n$ matrix, and for the $b$-function of the ideal of sub-maximal Pfaffians of a generic skew-symmetric matrix of odd size.

\subsection{Lower bounds}\label{subsec:lowerbounds}

In order to obtain lower bounds for a $b$-function, it is important to be able to identify certain factors of the $b$-function which are easier to compute. One instance of this is given in equation (\ref{eq:bZ=lcmbZi}): the $b$-function of $Z$ is divisible by the $b$-function of any affine open subscheme. In this section we note that sometimes it is possible to identify roots of the $b$-function (i.e. linear factors) by showing an appropriate inclusion of $\D$-modules. As before $f=(f_1,\cdots,f_r)\in S^r$, and $I\subset S$ is the ideal generated by the $f_i$'s.

For $\a\in\bb{Z}$ we define $F_{\a}$ to be the $\D_X$-submodule of $S_{f_1\cdots f_r}$ generated by
\[f^{\ul\a}=\prod_{i=1}^r f_i^{\a_i},\rm{ where }\ul{\a}=(\a_1,\cdots,\a_r)\in\bb{Z}^r,\ \a_1+\cdots+\a_r=\a.\]
It is clear that $F_{\a+1}\subseteq F_{\a}$ for every $\a\in\bb{Z}$. We have moreover:

\begin{proposition}\label{prop:rootbfun}
 If $\a\in\bb{Z}$ and if there is a strict inclusion $F_{\a+1}\subsetneq F_{\a}$ then $\a$ is a root of $b_f(s)$.
\end{proposition}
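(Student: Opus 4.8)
The plan is to contrapose: I will assume that $\a$ is \emph{not} a root of $b_f(s)$ and deduce that $F_{\a+1}=F_{\a}$, i.e. that $f^{\ul\a}\in F_{\a+1}$. The starting point is the defining relation \eqref{eq:genbfseqn} of the $b$-function, which expresses $b_f(s)\cdot f^{\ul s}$ as a $\D_X[\ul s]$-combination of the twisted generators $\hat{c}_{\ul s}\cdot\prod_i f_i^{s_i+c_i}$ with $|\hat c|=1$. The idea is to specialize the variables $s_i$ to a tuple $\ul\a=(\a_1,\dots,\a_r)$ of integers with $\a_1+\dots+\a_r=\a$. Under such a specialization $f^{\ul s}\mapsto f^{\ul\a}\in S_{f_1\cdots f_r}$, and the right-hand side $b_f(s)\cdot f^{\ul s}$ becomes $b_f(\a)\cdot f^{\ul\a}$; since by assumption $b_f(\a)\neq 0$, after dividing by this nonzero scalar we will have written $f^{\ul\a}$ as a $\D_X$-combination of the specialized twisted terms.

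The key point is then to check that each specialized term $\hat c_{\ul s}\big|_{\ul s=\ul\a}\cdot\prod_i f_i^{\a_i+c_i}$ actually lies in $F_{\a+1}$. Since $|\hat c|=1$, the exponent tuple $\ul\a+\hat c$ has total degree $\a+1$, so $\prod_i f_i^{\a_i+c_i}\in F_{\a+1}$ \emph{provided} all the exponents $\a_i+c_i$ make sense as an element of the $\D_X$-module $S_{f_1\cdots f_r}$ generated by that monomial — which it does, as it is literally a Laurent monomial in the $f_i$. The subtlety handled by the factor $\hat c_{\ul s}$ is exactly the bookkeeping of negative exponents: by \eqref{eq:defchat}, $\hat c_{\ul\a}=\prod_{c_i<0}\a_i(\a_i-1)\cdots(\a_i+c_i+1)$, and I should choose the specialization $\ul\a$ so that these prefactors do not vanish when needed — concretely, one picks $\a_i$ large enough (say all $\a_i\gg 0$ except one absorbing the total $\a$, or more carefully any tuple avoiding the finitely many bad integer hyperplanes coming from the finitely many $\hat c$ appearing in \eqref{eq:genbfseqn}), so that $\hat c_{\ul\a}\neq 0$ for every relevant $\hat c$. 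With the prefactors nonzero scalars, each term is a $\D_X$-multiple of a Laurent monomial of total $f$-degree $\a+1$, hence lies in $F_{\a+1}$, and therefore $f^{\ul\a}\in F_{\a+1}\subseteq F_{\a}$, giving $F_\a=F_{\a+1}$.

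The main obstacle is making the specialization argument rigorous: the relation \eqref{eq:genbfseqn} lives in $B^f_{\ul s}$, a module over $\D_X[\ul s]$, and one must justify that specializing $\ul s\mapsto\ul\a$ is a well-defined $\D_X$-module map $B^f_{\ul s}\to S_{f_1\cdots f_r}$ sending $f^{\ul s}\mapsto f^{\ul\a}$ and $\prod_i f_i^{s_i+c_i}\mapsto\prod_i f_i^{\a_i+c_i}$ compatibly with the $\p_j$-action. This is standard (the $\p_j$ act on $f^{\ul s}$ by $\sum_j \frac{s_j(\p_j\bul f_i)}{f_i}$, which specializes correctly), but it is the step that needs care, and it is also where the freedom to choose $\ul\a$ avoiding the finitely many zero-loci of the $\hat c_{\ul s}$ gets used. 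Once that is in place, the rest is the elementary degree bookkeeping sketched above, and the contrapositive yields the proposition.
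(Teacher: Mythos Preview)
Your overall strategy --- take the contrapositive, specialize the defining relation \eqref{eq:genbfseqn} at an integer tuple $\ul\a$ with $|\ul\a|=\a$, and use $b_f(\a)\neq 0$ to divide --- is exactly the paper's approach. However, there is a genuine gap in the way you carry it out.

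The problem is your treatment of the prefactors $\hat c_{\ul\a}$. You believe that if $\hat c_{\ul\a}=0$ then the corresponding term is somehow ill-behaved, so you restrict to tuples $\ul\a$ avoiding the finitely many ``bad hyperplanes.'' This restriction is both unnecessary and fatal. It is unnecessary because if $\hat c_{\ul\a}=0$ then the specialized term $P_{\hat c}\big|_{\ul s=\ul\a}\cdot\hat c_{\ul\a}\cdot\prod_i f_i^{\a_i+c_i}$ is simply $0$, which certainly lies in $F_{\a+1}$; and in any case $\prod_i f_i^{\a_i+c_i}$ is a perfectly good element of $S_{f_1\cdots f_r}$ regardless of signs of exponents, and is by definition one of the generators of $F_{\a+1}$. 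It is fatal because by restricting to generic $\ul\a$ you only conclude $f^{\ul\a}\in F_{\a+1}$ for \emph{those} $\ul\a$, whereas $F_\a$ is by definition generated by $f^{\ul\a}$ over \emph{all} integer tuples $\ul\a$ with $|\ul\a|=\a$. A single (or even a generic) $f^{\ul\a}$ need not generate $F_\a$, so you cannot conclude $F_\a=F_{\a+1}$ from what you have shown.

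The fix is simply to drop the restriction: for \emph{every} $\ul\a$ with $|\ul\a|=\a$, the specialization map $B_{\ul s}^f\to S_{f_1\cdots f_r}$, $s_i\mapsto\a_i$, is a $\D_X$-module homomorphism, and applying it to \eqref{eq:genbfseqn} gives $b_f(\a)\cdot f^{\ul\a}\in F_{\a+1}$ (every term on the left lands in $F_{\a+1}$, vanishing or not). Dividing by $b_f(\a)\neq 0$ then gives $f^{\ul\a}\in F_{\a+1}$ for all such $\ul\a$, hence $F_\a\subseteq F_{\a+1}$. This is precisely the paper's argument.
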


\begin{proof}
 By the definition of $b_f(s)$, there exist tuples $\hat{c}$ and operators $P_{\hat{c}}\in\D_X[s_1,\cdots,s_r]$ such that (\ref{eq:genbfseqn}) holds. Assume now that $F_{\a+1}\subsetneq F_{\a}$ for some $\a\in\bb{Z}$, and consider any integers $\a_1,\cdots,\a_r$ with $\a_1+\cdots+\a_r=\a$. There is a natural $\D_X$-module homomorphism 
 \begin{equation}\label{eq:specpi}
\pi:S_{f_1\cdots f_r}[s_1,\cdots,s_r]\cdot f^{\ul s}\longrightarrow S_{f_1\cdots f_r},\rm{ defined by }\pi(s_i)=\a_i.  
 \end{equation}
 Applying $\pi$ to (\ref{eq:genbfseqn}) we find that $b_f(\a)\cdot f^{\ul\a}\in F_{\a+1}$. If $b_f(\a)\neq 0$ then we can divide by $b_f(\a)$ and obtain that $f^{\ul\a}\in F_{\a+1}$ for all $\ul{\a}$ with $|\ul{\a}|=\a$. Since the elements $f^{\ul\a}$ generate $F_{\a}$ it follows that $F_{\a}\subseteq F_{\a+1}$ which is a contradiction. We conclude that $b_f(\a)=0$, i.e. that $\a$ is a root of $b_f(s)$.
\end{proof}

We write $H_I^{\bullet}(S)$ for the local cohomology groups of $S$ with support in the ideal $I$. Proposition~\ref{prop:rootbfun} combined with non-vanishing results for local cohomology can sometimes be used to determine roots of the $b$-function as follows:

\begin{corollary}\label{cor:rootfromloccoh}
 If $b_I(s)$ has no integral root $\a$ with $\a<-r$, and if $H_I^r(S)\neq 0$ then $b_I(-r)=0$.
\end{corollary}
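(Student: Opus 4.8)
The plan is to apply Proposition~\ref{prop:rootbfun} with $\a=-r$, so the whole game is to show that the inclusion $F_{-r+1}\subseteq F_{-r}$ is strict whenever $H_I^r(S)\neq 0$. First I would recall how local cohomology is built from the localizations appearing in the definition of the $F_\a$. Using the \v{C}ech-type description of $H_I^\bullet(S)$ coming from the generators $f_1,\dots,f_r$, the top local cohomology module $H_I^r(S)$ is the cokernel of the map $\bigoplus_{i=1}^r S_{f_1\cdots\widehat{f_i}\cdots f_r}\to S_{f_1\cdots f_r}$; equivalently, every class in $H_I^r(S)$ is represented by an element of $S_{f_1\cdots f_r}$, and such an element maps to zero precisely when it lies in the $S$-submodule generated by the ``partial'' localizations (those missing at least one $f_i$ in the denominator). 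The key observation is that, with $\ul\a$ ranging over tuples of total degree $-r$, the element $f^{\ul\a}$ with all $\a_i=-1$ is exactly $1/(f_1\cdots f_r)$, whose image generates $H_I^r(S)$ as a $\D_X$-module (this is the standard fact that $H_I^r(S)$ is generated by the class of $1/(f_1\cdots f_r)$ over $\D_X$ when $I$ has $r$ generators — it is a cyclic $\D_X$-module).

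Next I would compare the two $\D_X$-modules $F_{-r}$ and $F_{-r+1}$ against this picture. The module $F_{-r}$ is the $\D_X$-submodule of $S_{f_1\cdots f_r}$ generated by all $f^{\ul\a}$ with $|\ul\a|=-r$; among these generators is $1/(f_1\cdots f_r)$, whose image in $H_I^r(S)$ is nonzero by hypothesis. On the other hand, I claim $F_{-r+1}$ is contained in the $S$-submodule $N\subseteq S_{f_1\cdots f_r}$ consisting of elements whose \v{C}ech class in $H_I^r(S)$ vanishes, i.e.\ the ``partial localizations'' submodule. Indeed $F_{-r+1}$ is generated over $\D_X$ by the $f^{\ul\b}$ with $|\ul\b|=-r+1$: each such $\ul\b$ must have at least one coordinate $\b_j\geq 0$, so $f^{\ul\b}$ lies in $S_{f_1\cdots\widehat{f_j}\cdots f_r}$, hence its class in $H_I^r(S)$ is zero; and $N$ is a $\D_X$-submodule of $S_{f_1\cdots f_r}$ (the \v{C}ech complex is a complex of $\D_X$-modules, so $H_I^r(S)$ is a $\D_X$-module and the surjection $S_{f_1\cdots f_r}\onto H_I^r(S)$ is $\D_X$-linear). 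Therefore $F_{-r+1}\subseteq N$, while $F_{-r}\not\subseteq N$ because it contains $1/(f_1\cdots f_r)\notin N$. This forces $F_{-r+1}\subsetneq F_{-r}$, and Proposition~\ref{prop:rootbfun} then gives $b_I(-r)=0$.

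Finally I would reconcile this with the hypothesis that $b_I(s)$ has no integral root $\a<-r$: this hypothesis is needed only to know that $-r$ is not \emph{already excluded} — wait, actually re-examining, the role of that hypothesis is more subtle. It guarantees that $b_I(s)$ does not vanish at any integer below $-r$, which we combine with the general fact (from Section~\ref{subsec:bsatoideal}, since $b_Z(s)=b_I(s-\codim)$ depends only on $Z$ and roots are negative) that the $F_\a$ stabilize: by Proposition~\ref{prop:rootbfun}, $F_{\a+1}=F_\a$ for every integer $\a$ that is not a root of $b_I$, so under the hypothesis $F_{\a+1}=F_\a$ for all $\a\leq -r$. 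Hence $F_\a = F_{-r+1}$ for all $\a\le -r+1$ — no wait, for all $\a\le -r$ we get $F_\a=F_{-r}$. Combined with $F_{-r}\supseteq \D_X\cdot(1/(f_1\cdots f_r))$ surjecting onto $H_I^r(S)\neq 0$, strictness of $F_{-r+1}\subsetneq F_{-r}$ is what we established above. The main obstacle I anticipate is making the identification ``$F_{-r+1}$ lands in the kernel of the surjection onto $H_I^r(S)$, while $F_{-r}$ does not'' fully rigorous — in particular verifying that the $\D_X$-module structure on $H_I^r(S)$ is compatible with the \v{C}ech presentation and that the class of $1/(f_1\cdots f_r)$ is the canonical $\D_X$-generator; all of this is standard but needs to be cited or spelled out carefully.
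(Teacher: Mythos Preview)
Your approach is the same as the paper's, and you have all the ingredients: Proposition~\ref{prop:rootbfun} with $\a=-r$, the inclusion $F_{-r+1}\subseteq N:=\sum_{i}S_{f_1\cdots\widehat{f_i}\cdots f_r}$ (since each generator $f^{\ul\b}$ with $|\ul\b|=-r+1$ has some $\b_j\geq 0$, and $N$ is a $\D_X$-submodule), and the stabilization $F_{\a}=F_{-r}$ for all $\a\leq -r$ coming from the hypothesis via the contrapositive of Proposition~\ref{prop:rootbfun}.

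The one genuine gap is your repeated reliance on the claim that the class of $1/(f_1\cdots f_r)$ in $H_I^r(S)$ is nonzero, or equivalently that this class generates $H_I^r(S)$ as a $\D_X$-module. This is \emph{not} what the hypothesis $H_I^r(S)\neq 0$ says, and it is not a standard fact (for a single polynomial $g$, the analogous statement that $\D_X\cdot g^{-1}=S_g$ already fails whenever $b_g$ has an integer root below $-1$). Notice that your second-paragraph argument never uses the hypothesis on the roots of $b_I$; if it were valid as written, the corollary would hold without that hypothesis, which should make you suspicious.

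The fix is a one-line observation you stopped just short of: from $F_{\a}=F_{-r}$ for all $\a\leq -r$, together with the elementary fact that $\bigcup_{\a\in\bb{Z}}F_{\a}=S_{f_1\cdots f_r}$ (every element of the localization is an $S$-multiple of some $(f_1\cdots f_r)^{-k}\in F_{-rk}$), you get $F_{-r}=S_{f_1\cdots f_r}$. Now $H_I^r(S)\neq 0$ says exactly that $N\subsetneq S_{f_1\cdots f_r}$, so
\[
F_{-r+1}\subseteq N\subsetneq S_{f_1\cdots f_r}=F_{-r},
\]
and Proposition~\ref{prop:rootbfun} finishes. This is precisely the paper's argument; no appeal to any particular element of $F_{-r}$ is needed.
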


\begin{proof} For every $\a\in\bb{Z}$, $\a<-r$, and every $\ul{\a}=(\a_1,\cdots,\a_r)$ with $\a=\a_1+\cdots+\a_r$, we can apply the specialization map (\ref{eq:specpi}) to the equation (\ref{eq:genbfseqn}) to conclude that $b_I(\a)\cdot f^{\ul\a}\in F_{\a+1}$. Since $b_I(\a)\neq 0$ by assumption, we conclude that $f^{\ul\a}\in F_{\a+1}$ for all such $\a$, and therefore $F_{\a}=F_{\a+1}$. It follows that
\[F_{-r}=F_{-r-1}=F_{-r-2}=\cdots=S_{f_1\cdots f_r},\]
since the localization $S_{f_1\cdots f_r}$ is the union of all $F_{\a}$, $\a\leq-r$.
 
By Proposition~\ref{prop:rootbfun}, in order to show that $b_I(-r)=0$, it is enough to show that $F_{-r+1}\subsetneq F_{-r}$, which by the above is equivalent to proving that $F_{-r+1}$ does not coincide with the localization $S_{f_1\cdots f_r}$. Consider any generator $f^{\ul\a}$ of $F_{-r+1}$, corresponding to a tuple $\ul{\a}\in\bb{Z}^r$ with $\a_1+\cdots+\a_r=-r+1$. At least one of the $\a_i$'s has to be nonnegative, so that $f^{\ul\a}$ belongs to $S_{f_1\cdots\hat{f_i}\cdots f_r}$, the localization of $S$ at a product of all but one of the generators $f_i$. This shows that
\begin{equation}\label{eq:incF-r+1S_f}
F_{-r+1}\subseteq\sum_{i=1}^r S_{f_1\cdots\hat{f_i}\cdots f_r}.
\end{equation}
Using the \v Cech complex description of local cohomology, and the assumption that $H_I^r(S)\neq 0$, we conclude that there is a strict inclusion
\[\sum_{i=1}^r S_{f_1\cdots\hat{f_i}\cdots f_r}\subsetneq S_{f_1\cdots f_r}.\]
Combining this with (\ref{eq:incF-r+1S_f}) we conclude that $F_{-r+1}\subsetneq F_{-r}=S_{f_1\cdots f_r}$, as desired.
\end{proof}

\subsection{Upper bounds}\label{subsec:upperbounds}

Obtaining upper bounds for $b$-functions is in general a difficult problem, since most of the time it involves determining the operators $P_{\hat{c}}$ in (\ref{eq:genbfseqn}). In the presence of a large group of symmetries, invariant differential operators are natural candidates for such operators, and the problem becomes more tractable. As in Section~\ref{subsec:invDmods}, $G$ is a connected reductive linear algebraic group, and $\mf{g}$ is its Lie algebra.

\begin{definition}\label{def:irreduciblef}
 A tuple $f=(f_1,\cdots,f_r)\in S^r$ is said to be \defi{multiplicity-free (for the $G$-action)} if
\begin{itemize}
\item[(a)] For every nonnegative integer $\a$, the polynomials
\[f^{\ul\a}=f_1^{\a_1}\cdots f_r^{\a_r},\rm{ for }\ul{\a}=(\a_1,\cdots,\a_r)\in\bb{Z}^r_{\geq 0}\rm{ satisfying }\a_1+\cdots+\a_r=\a,\]
span an irreducible $G$-subrepresentation $V_{\a}\subset S$.
\item[(b)] For every $\a\in\bb{Z}_{\geq 0}$, the multiplicity of $V_{\a}$ inside $S$ is equal to one.
\end{itemize}
\end{definition}

A typical example of a multiplicity-free tuple arises in the case $r=1$ from a semi-invariant on a prehomogeneous vector space. In this case the computations for the Bernstein-Sato polynomials have been pursued thoroughly (see for example \cites{kimu,prehomogeneous}). Our definition gives a natural generalization to tuples with $r>1$ entries. We have the following:

\begin{proposition}\label{prop:DfgivesPfs}
Consider a multiplicity-free tuple $f=(f_1,\cdots,f_r)$ for some $G$, and a $G$-invariant differential operator $D_f=\sum_{i=1}^r g_i\cdot f_i$, where $g_i\in\D_X$. If we let $s=s_1+\cdots+s_r$ then there exists a polynomial $P_f(s)\in\bb{C}[s]$ such that
\[D_f\cdot f^{\ul s} = P_f(s)\cdot f^{\ul s},\]
and moreover we have that $b_{f}(s)$ divides $P_f(s)$.
\end{proposition}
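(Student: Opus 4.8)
The plan is to exploit the multiplicity-freeness to show first that $D_f\cdot f^{\ul s}$ is a scalar multiple (depending on $\ul s$) of $f^{\ul s}$, and then to identify that scalar with a polynomial in $s=s_1+\cdots+s_r$ alone. First I would argue that $D_f\cdot f^{\ul s}$ lies in the $\bb{C}[s_1,\dots,s_r]$-span of $f^{\ul s}$ inside $B^f_{\ul s}$. The point is that $D_f=\sum g_i\cdot f_i$ shifts the total exponent by $0$: for a nonnegative integer $\ul\a$, $D_f\cdot f^{\ul\a}$ lands in the degree-$|\ul\a|$ graded piece of $S$ (using homogeneity of the $g_i\cdot f_i$ if we arrange the grading correctly, or more robustly, in the $G$-isotypic component $V_{|\ul\a|}$, since $D_f$ is $G$-equivariant and $f^{\ul\a}\in V_{|\ul\a|}$). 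By multiplicity-freeness part (b), $V_{|\ul\a|}$ occurs with multiplicity one in $S$, so $D_f\cdot f^{\ul\a}\in V_{|\ul\a|}$, and by part (a) the vectors $f^{\ul\a'}$ with $|\ul\a'|=|\ul\a|$ span $V_{|\ul\a|}$; since $D_f$ is $G$-invariant it must act on the irreducible $V_{|\ul\a|}$ by a scalar, so $D_f\cdot f^{\ul\a}$ is a scalar multiple of $f^{\ul\a}$ itself (being a highest-weight-type vector in the sense that the orbit of $f^{\ul\a}$ under $G$ spans). Interpolating over all $\ul\a\in\bb{Z}_{\geq 0}^r$ and using that these scalars depend polynomially on $\ul\a$ (the action on the generator of a family of representations varies polynomially — this follows by writing $D_f$ applied to $f^{\ul s}$ formally and noting each $\p_i$ introduces a factor linear in the $s_j$), one gets a polynomial $P_f(s_1,\dots,s_r)$ with $D_f\cdot f^{\ul s}=P_f(s_1,\dots,s_r)\cdot f^{\ul s}$.

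Next I would show $P_f$ depends only on $s=s_1+\cdots+s_r$. This should follow from the fact that for each fixed total degree $\a$, the scalar by which $D_f$ acts on the irreducible $V_{\a}$ is a single number, independent of which $\ul\a$ with $|\ul\a|=\a$ we chose; since the $f^{\ul\a}$ for varying $\ul\a$ (same total) are all in the same irreducible and $D_f$ acts by one scalar there, $P_f(\ul\a)$ depends only on $\a$. Hence $P_f(s_1,\dots,s_r)=P_f(s)$ for a one-variable polynomial, justifying the notation in the statement.

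Finally, for divisibility $b_f(s)\mid P_f(s)$: the defining functional equation for $D_f$ reads, after expanding $D_f=\sum_i g_i\cdot f_i$,
\[
P_f(s)\cdot f^{\ul s}=\sum_{i=1}^r g_i\cdot\bigl(f_i\cdot f^{\ul s}\bigr)=\sum_{i=1}^r g_i\cdot f^{s_1,\dots,s_i+1,\dots,s_r},
\]
and each summand is (after dividing out the product $f_1\cdots f_r$ implicit in working inside $B^f_{\ul s}$) of the shape $P_{\hat c}\cdot\hat c_{\ul s}\cdot\prod f_j^{s_j+c_j}$ with $\hat c=e_i$ the $i$-th standard basis vector, $|\hat c|=1$, and $\hat c_{\ul s}=1$ since no coordinate of $e_i$ is negative. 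Thus $P_f(s)\cdot f^{\ul s}$ lies in the $\D_X[\ul s]$-submodule generated by the $\hat c_{\ul s}\cdot\prod f_i^{s_i+c_i}$, so by the minimality in the definition of $b_f(s)$ we get $b_f(s)\mid P_f(s)$.

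I expect the main obstacle to be the first step: carefully justifying that the action of $D_f$ on $f^{\ul s}$ is by a scalar \emph{polynomial in the $s_i$}, rather than merely a scalar for each integer specialization. The cleanest route is probably to observe directly from the formula for the $\p_i$-action on $f^{\ul s}$ in $B^f_{\ul s}$ that $g_i\cdot(f_i\cdot f^{\ul s})$ is, after clearing denominators, an element of $S[\ul s]\cdot f^{\ul s}$ whose coefficient is polynomial in $\ul s$; combined with the representation-theoretic argument that forces this $S[\ul s]$-coefficient to be a \emph{constant} (in the $x$-variables) multiple of $f^{\ul s}$ for all integer specializations with $\ul\a\geq 0$, one concludes it is a constant multiple for all $\ul s$, and the constant is the sought polynomial $P_f$. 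The interplay between "polynomial in $\ul s$" (an algebraic fact about $B^f_{\ul s}$) and "scalar" (a representation-theoretic fact at integer points) is the delicate part, but multiplicity-freeness is exactly what bridges it.
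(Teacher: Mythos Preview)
Your proposal is correct and follows essentially the same approach as the paper. The paper's presentation is slightly cleaner in that it begins by invoking the free rank-one structure of $B_{\ul s}^f$ to write $D_f\cdot f^{\ul s}=Q\cdot f^{\ul s}$ with $Q\in S_{f_1\cdots f_r}[s_1,\dots,s_r]$ from the outset, and then specializes to nonnegative integer tuples to force $Q\in\bb{C}[\ul s]$ and then $Q\in\bb{C}[s]$ via Schur's Lemma---exactly the ``cleanest route'' you describe in your final paragraph; the divisibility argument is identical.
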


\begin{proof}
Since the action of $D_f$ preserves $B_{\ul s}^f$, there exists an element $Q\in S_{f_1\cdots f_r}[s_1,\cdots,s_r]$ with the property
\[D_f\cdot f^{\ul s}=Q\cdot f^{\ul s}.\]
The goal is to show that, as a polynomial in $s_1,\cdots,s_r$, $Q=Q(s_1,\cdots,s_r)$ has coefficients in $\bb{C}$, and moreover that it can be expressed as a polynomial only in $s=s_1+\cdots+s_r$. For this, it suffices to check that:

(a) $Q(\a_1,\cdots,\a_r)\in\bb{C}$ for every $\a_1,\cdots,\a_r\in\bb{Z}_{\geq 0}$.

(b) For $\a_i$ as in (a), $Q(\a_1,\cdots,\a_r)$ only depends on $\a=\a_1+\cdots+\a_r$.

Let $\a_1,\cdots,\a_r$ be arbitrary non-negative integers, and write $\a=\a_1+\cdots+\a_r$. Since $V_{\a}$ is irreducible, $\scpr{V_{\a}}{S}=1$, and $D_f$ is $G$-invariant, it follows from Schur's Lemma that $D_f$ acts on $V_{\a}$ by multiplication by a scalar, i.e. $Q(\a_1,\cdots,\a_r)\in\bb{C}$ is a scalar that only depends on $\a$, so conditions (a) and (b) are satisfied.

To see that $b_f(s)$ divides $P_f(s)$, it suffices to note that $D_f\cdot f^{\ul s} = P_f(s)\cdot f^{\ul s}$ can be rewritten in the form (\ref{eq:genbfseqn}), where the sum is over tuples $\hat{c}=(0,\cdots,0,1,0,\cdots,0)$ with $c_i=1$, $c_j=0$ for $j\neq i$, with corresponding operator $P_{\hat{c}}=g_i$. Since $b_f(s)$ is the lowest degree polynomial for which (\ref{eq:genbfseqn}) holds, it follows that $b_f(s)$ divides $P_f(s)$.
\end{proof}

\subsection{Maximal minors}

In this section $X=X_{m,n}$ is the vector space of $m\times n$ matrices, $m\geq n$. The group $G=\GL_m\times\GL_n$ acts on $X$ via row and column operations. The coordinate ring of $X$ is $S=\bb{C}[x_{ij}]$, and we consider the tuple $d=(d_K)_{K\in{[m]\choose n}}$ of maximal minors defined in (\ref{eq:defdK}). The tuple $d$ is multiplicity-free for the $G$-action, where for $\a\in\bb{Z}_{\geq 0}$, the corresponding representation $V_{\a}$ in Definition~\ref{def:irreduciblef} is $S_{(\a^n)}\bb{C}^m\oo S_{(\a^n)}\bb{C}^n$ from (\ref{eq:Cauchy}) (see for instance \cite[Thm.~6.1]{deconcini-eisenbud-procesi}). We associate to $d$ the invariant differential operator $D_d$ in (\ref{eq:defDd}) and by Proposition~\ref{prop:DfgivesPfs} there exists a polynomial $P_d(s)$ with
\begin{equation}\label{eq:DdgivesPd}
D_d\cdot d^{\ul s}=P_d(s)\cdot d^{\ul s}.
\end{equation}

\begin{theorem}\label{thm:Pds}
 With the notation above, we have that
\begin{equation}\label{eq:Pds}
P_d(s)=\prod_{i=m-n+1}^m (s+i). 
\end{equation}
\end{theorem}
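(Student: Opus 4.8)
The plan is to compute the eigenvalue of the invariant operator $D_d$ on the $\GL_m\times\GL_n$-representation $V_\a=S_{(\a^n)}\bb{C}^m\oo S_{(\a^n)}\bb{C}^n$ appearing in Cauchy's formula, using the Capelli-element machinery of Section~\ref{subsec:CapelliFourier}. The key observation is that $D_d=\sum_K\p_K\cdot d_K$ is, up to a known constant, one of the Capelli-type central elements: writing $\mc{E}^{(n)}=(\mc{E}_{ij})_{i,j\in[n]}$ for the image in $\D_X$ of the standard basis of $\mf{gl}_n$ acting on the column indices (so that $\mc{E}_{ij}=\sum_{k\in[m]}x_{kj}\p_{ki}$), the classical Capelli identity \cite[(1.1)]{css} expresses $D_d$ as $\coldet(\mc{E}^{(n)}+\diag(n-1,n-2,\cdots,0))$, i.e.\ as $(-1)^n\cdot\mc{C}(z)\big|_{z=0}$ evaluated for $r=n$ on the $\mf{gl}_n$-side — equivalently, up to sign, it is the top Capelli element $\mc{C}_n\in\mc{ZU}(\mf{gl}_n)$. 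First I would make this identification precise and fix the normalization. Then I would invoke \eqref{eq:actionC(z)onVlam}: since $S_{(\a^n)}\bb{C}^n=V_\ll$ with $\ll=(\a^n)$, the operator $\coldet(\mc{E}^{(n)}+\Delta)$ acts on it by $\prod_{i=1}^n(\a+n-i-z)$, and specializing $z=0$ gives $\prod_{i=1}^n(\a+n-i)=\prod_{i=1}^n(\a+i-1)$, which is a polynomial in $\a$.

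The remaining issue is the matching between the eigenvalue of $D_d$ on $V_\a\subset S$ and the polynomial $P_d(s)$ defined via $D_d\cdot d^{\ul s}=P_d(s)\cdot d^{\ul s}$. By Proposition~\ref{prop:DfgivesPfs} and its proof, $P_d(s)$ is the unique polynomial in $s$ whose value at every non-negative integer $\a$ equals the scalar by which $D_d$ acts on $V_\a$; moreover $d^{\ul\a}\in V_\a$ is (a highest-weight-type vector in) that irreducible. So it suffices to evaluate the scalar at each $\a\in\bb{Z}_{\geq 0}$, and for this I would act on a convenient vector, e.g.\ $d_{[n]}^\a=p_0^\a$, which up to normalization is the highest weight vector of $S_{(\a^n)}\bb{C}^m\oo S_{(\a^n)}\bb{C}^n$. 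The Capelli-element computation above then yields the scalar $\prod_{i=1}^n(\a+i-1)$ from the $\mf{gl}_n$ side. But the theorem claims $P_d(s)=\prod_{i=m-n+1}^m(s+i)$, which is $\prod_{j=1}^n(s+m-n+j)$ — a \emph{shift by $m-n$} compared to the naive $\mf{gl}_n$ answer. This discrepancy is the heart of the matter: $D_d$ must be rewritten so that the shift appears, and the natural way is to use the \emph{other} presentation $D_\p=\sum_K d_K\cdot\p_K$ versus $D_d$, or equivalently to track that $D_d$ as a Capelli element of $\mf{gl}_n$ acts on $\Sym(\bb{C}^m\oo\bb{C}^n)$ where the relevant highest weight is not $(\a^n)$ for $\mf{gl}_n$ in isolation but carries the $\GL_m$-twist; reconciling the $\mf{gl}_m$ and $\mf{gl}_n$ Capelli eigenvalues (they must agree since $D_d$ is the same operator) forces the weight on the $m\times m$ side to be $(\a^n,0^{m-n})$, and \eqref{eq:actionC(z)onVlam} applied with $r=m$, $\ll=(\a^n,0^{m-n})$, at the appropriate specialization of $z$, produces exactly $\prod_{i=m-n+1}^m(\a+i)$.

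Concretely, the cleanest route is: (i) identify $D_d$ with the image under $\tau$ of a specific element $\gamma\in\mc{ZU}(\mf{gl}_n)$ (a scalar multiple of $\mc{C}_n$) via the Capelli identity; (ii) separately identify the \emph{same} operator $D_d$, using the transpose/duality between $\bw^n\bb{C}^m$ and its image, with $\tau$ applied to an element of $\mc{ZU}(\mf{gl}_m)$ — this is where the Fourier-transform lemmas (Lemma~\ref{lem:Fs^r}, Lemma~\ref{lem:Fs^r-1}) enter, since passing from $d_K$ to $\p_K=d_K^*$ is exactly the substitution $\mc{E}_{ij}\mapsto-\mc{E}_{ji}$, $\mc{E}_{ii}\mapsto-\mc{E}_{ii}-u$ defining $\mc{F}_u$, with the parameter $u$ determined by the degree bookkeeping (one expects $u=m-n$ or a close variant); (iii) evaluate the resulting central element's eigenvalue on $S_{(\a^n)}\bb{C}^m$ by acting on the highest weight vector $p_0^\a$ of weight $(\a^n,0^{m-n})$, using \eqref{eq:actionC(z)onVlam} with $r=m$. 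The main obstacle I anticipate is step (ii): pinning down the exact central element of $\mc{U}(\mf{gl}_m)$ that $D_d$ represents and the exact Fourier parameter, i.e.\ getting the normalization and the shift $m-n$ right rather than $m-n\pm 1$. Once the operator is correctly identified on the $\mf{gl}_m$ side as (a constant times) $\mc{F}_{m-n}\mc{C}_n$ or the analogous sub-maximal Capelli element, Lemma~\ref{lem:Fs^r-1} (with $r$ replaced by $n$ and the role of $(s^{r-1})$ played appropriately) and \eqref{eq:actionFC(z)onVlam} give $\mc{F}_{m-n}P(\a,z)\big|_{z=0}=\prod_{i=1}^n(\a+m-n+i)$, matching the claimed product $\prod_{i=m-n+1}^m(s+i)$ after setting $s=\a$ and invoking polynomiality in $s$ to conclude for all $s$.
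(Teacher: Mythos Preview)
Your proposal has the right ingredients---Capelli identities and the Fourier transform on $\mc{U}(\mf{gl}_r)$---but they are assembled in the wrong order, and this leads you into an unnecessary detour through $\mf{gl}_m$.

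The first slip is in step (i): the classical Capelli identity does \emph{not} express $D_d=\sum_K\p_K\cdot d_K$ as $\coldet(\mc{E}^{(n)}+\diag(n-1,\ldots,0))$. It expresses the \emph{other} operator $D_{\p}=\sum_K d_K\cdot\p_K$ that way; in the paper's notation $D_{\p}=\tau(\mc{C}_n)$. So the eigenvalue $\prod_{i=0}^{n-1}(\a+i)$ you compute is actually the eigenvalue of $D_{\p}$, not of $D_d$, and the ``discrepancy'' you notice is exactly the difference between these two operators. The paper obtains this same eigenvalue for $D_{\p}$ directly from Cayley's identity, via $\p_K\cdot d_L^s=0$ for $K\neq L$ and $\p_L\cdot d_L^s=\bigl(\prod_{i=0}^{n-1}(s+i)\bigr)d_L^{s-1}$.

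The passage from $D_{\p}$ to $D_d$ is then a single Fourier transform, but it stays entirely on the $\mf{gl}_n$ side: the Weyl-algebra Fourier transform $\mc{F}(x_{ij})=\p_{ij}$, $\mc{F}(\p_{ij})=-x_{ij}$ satisfies $D_d=(-1)^n\mc{F}(D_{\p})$, and on the polarization operators $E_{ij}=\sum_k x_{ki}\p_{kj}$ it sends $E_{ij}\mapsto -E_{ji}$ for $i\neq j$ and $E_{ii}\mapsto -E_{ii}-m$. Hence $\mc{F}$ intertwines $\tau$ with the abstract Fourier transform $\mc{F}_m$ on $\mc{U}(\mf{gl}_n)$ with parameter $u=m$ (not $m-n$). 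Now Lemma~\ref{lem:Fs^r} with $r=n$ and $u=m$ applies directly: the eigenvalue of $D_d$ on $V_{(s^n)}$ is $(-1)^n\prod_{i=0}^{n-1}(-s-m+i)=\prod_{i=m-n+1}^m(s+i)$. There is no need to pass to $\mc{ZU}(\mf{gl}_m)$, no appeal to Lemma~\ref{lem:Fs^r-1} (that lemma is tailored to the Pfaffian case), and the parameter is $m$, determined by the number of rows being summed in $E_{ii}$, not by any $\GL_m$-weight bookkeeping.
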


\begin{proof} In order to compute $P_d(s)$, it suffices to understand the action of $D_d$ on $d_L^s$ for some fixed $L\in{[m]\choose n}$ (this corresponds to letting $s_K=0$ for $K\neq L$ in (\ref{eq:DdgivesPd})). We consider instead the action of the operator $D_{\p}$ in (\ref{eq:defDd}), and note that by Cayley's identity \cite[(1.1)]{css} one has
\[\p_K\cdot d_L^s=0\rm{ for }K\neq L,\quad \p_L\cdot d_L^s=\left(\prod_{i=0}^{n-1}(s+i)\right)\cdot d_L^{s-1},\]
which implies
\begin{equation}\label{eq:Dd*dL^s}
D_{\p}\cdot d_L^s = \left(\prod_{i=0}^{n-1}(s+i)\right)\cdot d_L^s. 
\end{equation}
Let $\mc{F}:\D_X\lra\D_X$ denote the (usual) Fourier transform, defined by $\mc{F}(x_{ij})=\p_{ij}$, $F(\p_{ij})=-x_{ij}$, and note that $D_d=(-1)^n\cdot\mc{F}(D_{\p})$. We will obtain $P_d(s)$ by applying the Fourier transform to (\ref{eq:Dd*dL^s}).

For $i,j\in [n]$, we consider the polarization operators
\[E_{ij}=\ds_{k=1}^m x_{ki}\cdot\partial_{kj}.\]
The action of the Lie algebra $\mf{gl}_n\subset\mf{gl}_m\oplus\mf{gl}_n$ on $X$ induces a map $\tau:\mc{U}(\mf{gl}_n)\to\D_X$ as in (\ref{eq:deftau}), sending $\tau(\mc{E}_{ij})=E_{ij}$ for all $i,j$. The Fourier transform sends
\[\mc{F}(E_{ij})=-E_{ji}\rm{ for }i\neq j,\quad \mc{F}(E_{ii})=-E_{ii}-m,\]
so using the notation in Section~\ref{subsec:CapelliFourier} we obtain a commutative diagram
\[
 \xymatrix{
 \mc{U}(\mf{gl}_n) \ar^{\mc{F}_m}[r] \ar_{\tau}[d] & \mc{U}(\mf{gl}_n) \ar^{\tau}[d]\\
 \D_X \ar^{\mc{F}}[r] & \D_X\\
 }
\]
Since $D_{\p}$ is in $\tau(\mc{ZU}(\mf{gl}_n))$ (it is in fact equal to $\tau(\mc{C}_n)$ by \cite[(11.1.9)]{howe-umeda}), it follows from (\ref{eq:Dd*dL^s}), from the commutativity of the above diagram and from Lemma~\ref{lem:Fs^r} with $r=n$ and $u=m$ that
\[D_d\cdot d_K^s = \left((-1)^n\prod_{i=0}^{n-1}(-s-m+i)\right)\cdot d_K^s=\left(\prod_{i=m-n+1}^m (s+i)\right)\cdot d_K^s,\]
which concludes the proof of our theorem.
\end{proof}

\begin{remark}
 A more direct way to prove (\ref{eq:Pds}) is to use for instance \cite[Prop.~1.2]{css-capelli} in order to obtain a determinantal representation for the operator $D_d$, namely
\[D_d=\coldet
\begin{pmatrix}
E_{11}+m & E_{12} & \cdots & E_{1n}\\
E_{21} & E_{22}+m-1 & \cdots & E_{2n}\\
\vdots & \vdots & \ddots & \vdots\\
E_{n1} & E_{n2} & \cdots & E_{nn}+m-n+1
\end{pmatrix},\] 
from which the conclusion follows easily. The advantage of our proof of Theorem~\ref{thm:Pds} is that it applies equally to the case of sub-maximal Pfaffians in Section~\ref{subsec:Pfaffians}, where we are not aware of a more direct approach.
\end{remark}

\subsection*{Almost square matrices}\label{sec:almostsquare}

In the case of $(n+1)\times n$ matrices, we can show that the lower and upper bounds obtained by the techniques described above agree, and we obtain the following special instance of the Theorem on Maximal Minors described in the Introduction:

\begin{theorem}\label{thm:n+1byn}
 If $d$ is the tuple of maximal minors of the generic $(n+1)\times n$ matrix then its $b$-function is
\[b_d(s)=\prod_{i=2}^{n+1} (s+i).\]
\end{theorem}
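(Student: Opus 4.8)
The plan is to combine the upper bound coming from the invariant operator $D_d$ with a matching lower bound coming from local cohomology, using the divisibility relation (\ref{eq:divinduction}) to supply the lower-degree factors by induction on $n$.

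First I would record the upper bound. By Theorem~\ref{thm:Pds} applied to $m=n+1$, the operator $D_d$ satisfies $D_d\cdot d^{\ul s}=P_d(s)\cdot d^{\ul s}$ with $P_d(s)=\prod_{i=2}^{n+1}(s+i)$, and Proposition~\ref{prop:DfgivesPfs} gives that $b_d(s)$ divides $P_d(s)$. So it remains to prove the reverse: each linear factor $(s+i)$ for $i=2,\dots,n+1$ is a root of $b_d(s)$.

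Next I would run the lower-bound argument. The ideal $I=I_n$ of maximal minors of the generic $(n+1)\times n$ matrix is generated by $r=n+1$ polynomials (the maximal minors $d_K$, $K\in{[n+1]\choose n}$), so Corollary~\ref{cor:rootfromloccoh} applies with $r=n+1$: provided $b_I(s)$ has no integral root strictly less than $-(n+1)$ and provided $H^{n+1}_I(S)\neq 0$, we conclude $b_I(-(n+1))=0$, i.e. $(s+n+1)$ divides $b_d(s)$. The first hypothesis is immediate from the upper bound, since all roots of $P_d(s)$ lie in $\{-2,\dots,-(n+1)\}$. For the non-vanishing of local cohomology: $I_n$ has codimension $2$ in $S=\bb{C}[x_{ij}]$ (the codimension of the generic determinantal variety $Z_{n+1,n}$), while $I_n$ is generated by $n+1$ elements, so one expects local cohomology in degrees up to $n+1$; concretely $H^{n+1}_I(S)\neq 0$ follows from the fact that the top nonvanishing local cohomology of $S$ with support in $I_n$ sits in cohomological degree equal to the projective dimension of $S/I_n$, which for maximal minors of an $(n+1)\times n$ matrix is $n+1$ (this is a standard consequence of the Eagon--Northcott resolution, or one can cite the known computation of $H^\bullet_I(S)$ for maximal minors). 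This pins down the top root $-(n+1)$.

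Finally I would get the remaining roots $-2,\dots,-n$ by induction on $n$. By (\ref{eq:divinduction}), $b_{Z_{n,n-1}}(s)$ divides $b_{Z_{n+1,n}}(s)$, and translating back via (\ref{eq:defbZ}) with $\codim_X Z_{n+1,n}=2=\codim_X Z_{n,n-1}$, this says $b_{d'}(s)$ divides $b_d(s)$, where $d'$ is the tuple of maximal minors of the generic $n\times(n-1)$ matrix. By the inductive hypothesis $b_{d'}(s)=\prod_{i=2}^{n}(s+i)$; the base case $n=1$ is the smooth case $b_d(s)=s+2$ after the codimension shift (equivalently $Z_{2,1}$ is a point, $b_{Z_{2,1}}(s)=s$), or $n=0$ trivially. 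Combining this divisibility with the factor $(s+n+1)$ from the previous step, $\prod_{i=2}^{n+1}(s+i)$ divides $b_d(s)$; together with the upper bound $b_d(s)\mid\prod_{i=2}^{n+1}(s+i)$ and monicity, equality follows.

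I expect the main obstacle to be the non-vanishing statement $H^{n+1}_I(S)\neq 0$ with the correct top cohomological degree: one must be careful that although $I_n$ is generated by $n+1$ elements (so $H^j_I(S)=0$ for $j>n+1$ is automatic from the \v Cech complex), the non-vanishing in the \emph{top} degree $n+1$ is the delicate point, and it is exactly what Corollary~\ref{cor:rootfromloccoh} needs. This should be extracted from the known structure of local cohomology of determinantal ideals of maximal minors (or proved directly from the Eagon--Northcott complex together with local duality over the Cohen--Macaulay ring $S$).
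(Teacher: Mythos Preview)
Your approach is essentially identical to the paper's: upper bound via Proposition~\ref{prop:DfgivesPfs} and Theorem~\ref{thm:Pds}, the lower factors $(s+2)\cdots(s+n)$ by induction through (\ref{eq:divinduction}), and the top factor $(s+n+1)$ via Corollary~\ref{cor:rootfromloccoh} with $r=n+1$.

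One correction is needed in your justification of $H^{n+1}_I(S)\neq 0$. The Eagon--Northcott complex gives $\operatorname{pd}_S(S/I_n)=m-n+1=2$ here (the ideal is perfect of codimension~$2$), not $n+1$; and there is in any case no general principle identifying the top nonvanishing index of $H^{\bullet}_I(S)$ with the projective dimension of $S/I$. Your fallback, citing the known computation of local cohomology for maximal minors, is the correct route and is exactly what the paper does (invoking \cite{witt} or \cite{raicu-weyman-witt}); you were right to flag this nonvanishing as the delicate point.
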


\begin{proof}
 We have by Proposition~\ref{prop:DfgivesPfs} and Theorem~\ref{thm:Pds} that $b_d(s)$ divides the product $(s+2)\cdots (s+n+1)$. If we write $Z_{n+1,n}$ for the variety of $(n+1)\times n$ matrices of rank smaller than $n$ as in Section~\ref{subsec:linalg} then the defining ideal of $Z_{n+1,n}$ is generated by the entries of $d$. Since $Z_{n+1,n}$ has codimension two inside $X_{n+1,n}$, $b_{Z_{n+1,n}}(s)=b_d(s-2)$ by (\ref{eq:defbZ}), and thus it suffices to show that
 \begin{equation}\label{eq:lowerbdbZn}
  b_{Z_{n+1,n}}(s)\rm{ is divisible by }\prod_{i=0}^{n-1}(s+i).  
 \end{equation}
 By induction on $n$, we may assume that $b_{Z_{n,n-1}}=\prod_{i=0}^{n-2}(s+i)$. Taking into account (\ref{eq:divinduction}) we are left with proving that $(-n+1)$ is a root of $b_{Z_{n+1,n}}(s)$, or equivalently that $(-n-1)$ is a root of $b_d(s)$. To do this we apply Corollary~\ref{cor:rootfromloccoh} with $r=n+1$, and $I$ the defining ideal of $Z_{n+1,n}$. It follows from \cite[Thm.~5.10]{witt} or \cite[Thm.~4.5]{raicu-weyman-witt} that $H^{n+1}_I(S)\neq 0$, so the Corollary applies and concludes our proof.
\end{proof}

\begin{remark}\label{rem:quiv}
An alternative approach to proving Theorem~\ref{thm:n+1byn} goes by first computing the $b$-function of several variables associated to $d_1,\cdots,d_{n+1}$ (see \cite[Lemma 1.9]{bub1}). The space $X_{n+1,n}$ is prehomogeneous under the action of the smaller group $(\complex^*)^{n+1} \times \GL_n(\complex)$. We will use freely some notions from \cite{bub1}. The maximal minors $d_1,\cdots,d_{n+1}$ can be viewed as semi-invariants for the following quiver with $n+2$ vertices and dimension vector 
\[\xymatrix{
1 & 1 & \cdots & 1 & 1 \\
 & & \ar[ull]\ar[ul]\ar[ur]\ar[urr] n & &
}\]

The dimension vector is preinjective, hence by \cite[Proposition 5.4(b)]{bub1} we can compute the $b$-function of several variables using reflection functors \cite[Theorem 5.3]{bub1}:
$$b_{d}(\underline{s})=[s]_{n-1,n}^{1,1,\cdots,1} \cdot [s]_1^{1,0,\cdots,0} \cdot [s]_1^{0,1,\cdots,0}\cdots [s]_1^{0,0,\cdots,1}.$$
This means that we have formulas
$$d_i^* \cdot d_i \cdot d^{\ul s} = (s_i+1)(s+2)(s+3)\cdots (s+n)\cdot d^{\ul s},$$
which, together with Lemma~\ref{lem:equivdefaps} below gives readily the Bernstein-Sato polynomial of the ideal. Such relations between $b$-functions of several variables and Bernstein-Sato polynomials of ideals have been investigated in \cite{bub2}.
\end{remark}

\subsection{Sub-maximal Pfaffians}\label{subsec:Pfaffians}

In this section $X=X_{n}$ is the vector space of $(2n+1)\times(2n+1)$ skew-symmetric matrices, with the natural action of $G=\GL_{2n+1}$. The coordinate ring of $X$ is $S=\bb{C}[x_{ij}]$ with $1\leq i<j\leq 2n+1$. We consider the tuple $d=(d_1,d_2,\cdots,d_{2n+1})$, where $d_i$ is the Pfaffian of the skew-symmetric matrix obtained by removing the $i$-th row and column of the generic skew-symmetric matrix $(x_{ij})_{i,j\in[2n+1]}$ (with the convention $x_{ji}=-x_{ij}$ and $x_{ii}=0$). The tuple $d$ is multiplicity-free for the $G$-action, where for $\a\in\bb{Z}_{\geq 0}$, the corresponding representation $V_{\a}$ in Definition~\ref{def:irreduciblef} is $S_{(\a^{2n})}\bb{C}^{2n+1}$ from (\ref{eq:skewCauchy}) (see for instance \cite[Thm.~4.1]{abeasis-delfra}). We associate to $d$ the invariant differential operator
\[D_d=\sum_{i=1}^{2n+1}d_i^*\cdot d_i,\]
and by Proposition~\ref{prop:DfgivesPfs} there exists a polynomial $P_d(s)$ with
\begin{equation}\label{eq:skewDdgivesPd}
D_d\cdot d^{\ul s}=P_d(s)\cdot d^{\ul s}.
\end{equation}

\begin{theorem}\label{thm:bfunskew}
 If $d$ is the tuple of sub-maximal Pfaffians of the generic $(2n+1)\times(2n+1)$ skew-symmetric matrix, then
\begin{equation}\label{eq:bfunskew}
 b_d(s)=P_d(s)=\prod_{i=0}^{n-1} (s+2i+3). 
\end{equation}
\end{theorem}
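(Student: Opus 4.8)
The plan is to prove the theorem in two halves, matching the upper and lower bounds. By Proposition~\ref{prop:DfgivesPfs} together with the invariant operator $D_d = \sum_i d_i^* \cdot d_i$, the $b$-function $b_d(s)$ divides $P_d(s)$, so the first task is to compute $P_d(s)$ explicitly and show it equals $\prod_{i=0}^{n-1}(s+2i+3)$. Following the strategy used for maximal minors in Theorem~\ref{thm:Pds}, I would pass to the Fourier-dual operator $D_{d^*} = \sum_i d_i \cdot d_i^*$ and evaluate its action on $d_L^s$ for a fixed $L$. One needs the skew-symmetric analogue of Cayley's identity, giving $d_i^* \cdot d_L^s = 0$ for $i \neq L$ and $d_L^* \cdot d_L^s = c(s) \cdot d_L^{s-1}$ for an explicit polynomial $c(s)$ (of degree $n$, with roots at $0, 2, 4, \dots, 2n-2$, reflecting the representation $S_{(\a^{2n})}\bb{C}^{2n+1}$). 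Then I would identify $D_{d^*}$, up to sign, as $\tau$ of a central element in $\mc{U}(\mf{gl}_{2n+1})$ — concretely as $\tau(\mc{F}_{u}\mc{C}_a)$ for an appropriate $u$ in the image of $\mf{gl}_{2n+1}$ acting on $X_n$ — and apply Lemma~\ref{lem:Fs^r-1} (with $r = 2n+1$, so that the relevant partition is $(s^{2n})$ and the Fourier parameter is $r-1 = 2n$) to read off $\mc{F}\,c(s) = c(-s-2n-1)$ up to a harmless sign. Expanding $(-1)^{?}\prod_{i=0}^{n-1}(-s-2n-1+2i)$ and re-indexing should yield exactly $\prod_{i=0}^{n-1}(s+2i+3)$, establishing that $b_d(s)$ divides the claimed product.

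For the lower bound I would argue by induction on $n$, exactly paralleling the proof of Theorem~\ref{thm:n+1byn}. Writing $Z_n$ for the variety of singular $(2n+1)\times(2n+1)$ skew-symmetric matrices as in Section~\ref{subsec:linalg}, the defining ideal $I$ of $Z_n$ is generated by the $2n+1$ sub-maximal Pfaffians, and $Z_n$ has codimension $3$ in $X_n$, so $b_{Z_n}(s) = b_d(s-3)$ by (\ref{eq:defbZ}). Thus it suffices to show $b_{Z_n}(s)$ is divisible by $\prod_{i=0}^{n-1}(s+2i)$. The inductive hypothesis gives $b_{Z_{n-1}}(s) = \prod_{i=0}^{n-2}(s+2i)$, and by (\ref{eq:divinduction}) this divides $b_{Z_n}(s)$; the only new factor to produce is $s + 2(n-1)$, i.e. it remains to show $-(2n-2)$ is a root of $b_{Z_n}(s)$, equivalently that $-(2n+1)$ is a root of $b_d(s)$. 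For this I would invoke Corollary~\ref{cor:rootfromloccoh} with $r = 2n+1$: the upper bound already shows $b_d(s)$ has no integral root less than $-(2n+1)$ (its roots are $-3, -5, \dots, -(2n+1)$, all $\geq -(2n+1)$), and the corollary then yields $b_d(-(2n+1)) = 0$ provided $H^{2n+1}_I(S) \neq 0$. The nonvanishing of this top-degree-looking local cohomology module for the ideal of sub-maximal Pfaffians is the needed input, and I would cite the relevant computation of local cohomology with support in Pfaffian ideals (the skew-symmetric analogue of the results of Witt / Raicu--Weyman--Witt used in Theorem~\ref{thm:n+1byn}), or derive it from the known $\GL_{2n+1}$-equivariant description of $H^\bullet_I(S)$.

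The main obstacle I anticipate is twofold. First, on the upper-bound side, pinning down the precise skew-symmetric Cayley identity and, more delicately, identifying $D_{d^*}$ with the correct central element and Fourier parameter in $\mc{U}(\mf{gl}_{2n+1})$: the map $\tau:\mc{U}(\mf{gl}_{2n+1}) \to \D_{X_n}$ coming from the action on $\bw^2\bb{C}^{2n+1}$ is not as transparent as in the generic-matrix case, and getting the shift $u = 2n$ and the overall normalization exactly right (so that the answer comes out as $s+2i+3$ and not some reflection of it) will require care — this is where an explicit reference such as \cite{howe-umeda} analogous to the formula $D_{\p} = \tau(\mc{C}_n)$ used in Theorem~\ref{thm:Pds} would be essential. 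Second, on the lower-bound side, the nonvanishing $H^{2n+1}_I(S) \neq 0$ is a genuine input that must be supplied from the literature on local cohomology of Pfaffian thickenings; if a clean citation is unavailable, one would need to verify it directly, e.g. via the characteristic-$p$ $F$-module techniques or the equivariant $\D$-module description of the Pfaffian variety, which would lengthen the argument considerably.
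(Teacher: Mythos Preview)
Your proposal is correct and follows essentially the same route as the paper: compute $P_d(s)$ via the skew Cayley identity \cite[Thm.~2.3]{css}, the centrality of $D_{d^*}$ from \cite[Cor.~11.3.19]{howe-umeda}, and Lemma~\ref{lem:Fs^r-1} with $r=2n+1$; then obtain the lower bound by induction via (\ref{eq:divinduction}) together with Corollary~\ref{cor:rootfromloccoh} and the nonvanishing $H^{2n+1}_I(S)\neq 0$ from \cite[Thm.~5.5]{raicu-weyman-witt}. The only imprecision is that $D_{d^*}$ itself lies in $\tau(\mc{ZU}(\mf{gl}_{2n+1}))$ (rather than being identified as $\tau(\mc{F}_u\mc{C}_a)$), and it is $D_d=(-1)^n\mc{F}(D_{d^*})$ that is obtained by Fourier transform---but this does not affect the argument.
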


\begin{proof}
 We begin by showing, using the strategy from the proof of Theorem~\ref{thm:Pds}, that $P_d(s)=\prod_{i=0}^{n-1} (s+2i+3)$. We have a commutative diagram
\[
 \xymatrix{
 \mc{U}(\mf{gl}_{2n+1}) \ar^{\mc{F}_{2n}}[r] \ar_{\tau}[d] & \mc{U}(\mf{gl}_{2n+1}) \ar^{\tau}[d]\\
 \D_X \ar^{\mc{F}}[r] & \D_X\\
 }
\]
If we let $D_{d^*}=\sum_{i=1}^{2n+1}d_i\cdot d_i^*$ then $D_d=(-1)^n\cdot\mc{F}(D_{d^*})$. It follows from \cite[Thm.~2.3]{css} that
\[d_i^*\cdot d_0^s=0\rm{ for }i\neq 0,\quad d_0^*\cdot d_0^s=\left(\prod_{i=0}^{n-1}(s+2i)\right)\cdot d_0^{s-1},\]
from which we obtain
\[D_{d^*}\cdot d_0^s = \left(\prod_{i=0}^{n-1}(s+2i)\right)\cdot d_0^s.\]
Since $D_{d^*}$ is in $\tau(\mc{ZU}(\mf{gl}_{2n+1}))$ by \cite[Cor.~11.3.19]{howe-umeda}, it follows from Lemma~\ref{lem:Fs^r-1} with $r=2n+1$ that
\[D_d\cdot d_0^s = \left((-1)^n\cdot\prod_{i=0}^{n-1}(-s-2n-1+2i)\right)\cdot d_0^s,\]
from which we obtain
\begin{equation}\label{eq:skewPds}
P_d(s)=\prod_{i=0}^{n-1} (s+2i+3).
\end{equation}
 
Using the notation in Section~\ref{subsec:linalg} we have that $b_d(s)=b_{Z_n}(s+3)$ since $Z_n$ has codimension three in~$X_n$, so (\ref{eq:bfunskew}) is equivalent to $b_{Z_n}(s)=\prod_{i=0}^{n-1}(s+2i)$. By induction on $n$ we have $b_{Z_{n-1}}(s)=\prod_{i=0}^{n-2}(s+2i)$, which divides $b_{Z_n}(s)$ by (\ref{eq:divinduction}). This shows that $-3,-5,\cdots,-2n+1$ are roots of $b_d(s)$, and since $b_d(s)$ divides $P_d(s)$, it follows from (\ref{eq:skewPds}) that the only other possible root is $-2n-1$. Using \cite[Thm.~5.5]{raicu-weyman-witt} and Corollary~\ref{cor:rootfromloccoh} with $r=2n+1$ and $I$ being the ideal generated by the $d_i$'s, it follows that $-2n-1$ is indeed a root of $b_d(s)$, hence (\ref{eq:bfunskew}) holds.
\end{proof}

\begin{remark}
The method described in Remark \ref{rem:quiv} can be used in this case as well. Using the decomposition (\ref{eq:skewCauchy}) and the Littlewood-Richardson rule, we see that $d_i^* \cdot S_{((\a+1)^{2n})}\bb{C}^{2n+1} \subset S_{(\a^{2n})}\bb{C}^{2n+1}$ for $\a\in\bb{Z}_{\geq 0}$. Moreover, under the action of diagonal matrices the weights of $d_1,\cdots, d_{2n+1}$ are linearly independent. Hence the tuple $d=(d_1,d_2,\cdots,d_{2n+1})$ has a $b$-function of several variables, and as in the proof of \cite[Theorem 5.1]{bub1} we obtain the formulas
$$d_i^* \cdot d_i \cdot d^{\ul s} = (s_i+1)(s+3)(s+5)\cdots (s+2n-1)\cdot d^{\ul s}.$$
Together with the analogue of Lemma~\ref{lem:equivdefaps} below, this gives the Bernstein-Sato polynomial of the ideal.
\end{remark}

\section{Bernstein--Sato polynomials for maximal minors}\label{sec:bsatomaxlminors}

In this section we generalize Theorem~\ref{thm:n+1byn} to arbitrary $m\times n$ matrices. We use the notation from Sections~\ref{subsec:invDmods} and~\ref{sec:almostsquare}: in particular $d=(d_K)_{K\in{[m]\choose n}}$ is the tuple of maximal minors as in~(\ref{eq:defdK}).

\begin{theorem}\label{thm:bfunmaxlminors}
 The Bernstein--Sato polynomial of the tuple of maximal minors of the generic $m\times n$ matrix~is
\[b_d(s)=\prod_{i=m-n+1}^m (s+i).\]
\end{theorem}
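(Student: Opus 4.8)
The plan is to combine the upper bound from Proposition~\ref{prop:DfgivesPfs} together with Theorem~\ref{thm:Pds}, which already tell us that $b_d(s)$ divides $P_d(s)=\prod_{i=m-n+1}^m(s+i)$, with a matching lower bound that exhibits each integer $-m,-m+1,\dots,-m+n-1$ as a root of $b_d(s)$. Equivalently, after the renormalization (\ref{eq:defbZ}) (here $Z_{m,n}$ has codimension $m-n+1$), it suffices to show that $b_{Z_{m,n}}(s)$ is divisible by $\prod_{i=0}^{n-1}(s+i)$. We already have from (\ref{eq:divinduction}) and induction on $n$ (with the base case $n=1$, where $Z_{m,1}$ is a point and $b=s$, or the square case from the Introduction) that $b_{Z_{m-1,n-1}}(s)=\prod_{i=0}^{n-2}(s+i)$ divides $b_{Z_{m,n}}(s)$; so the entire content is to show that the ``new'' root $-(n-1)$ of $b_{Z_{m,n}}(s)$ — equivalently, the root $-m+n-1$, i.e. $s=-(m-n+1)-(n-1)$ shifted back — is actually attained. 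In the almost-square case $m=n+1$ this was done via Corollary~\ref{cor:rootfromloccoh} using non-vanishing of $H^{n+1}_I(S)$, but for general $m$ the number of generators $r=\binom{m}{n}$ is much larger than the codimension, so that cohomological argument no longer pins down the right root directly. This is where the $\SL_n$-symmetry advertised in the section introduction must enter.

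The key idea I would pursue is to use the description (\ref{eq:rel}), (\ref{eq:slninvariantsS_p}) of $S^{\mathfrak{sl}_n}$: localizing at $p_0=d_{[n]}$ identifies the $\mathfrak{sl}_n$-invariant part of the localized ring with a Laurent polynomial ring in the algebraically independent elements $p_0, p_{ij}$. Because all the $d_K$ are $\mathfrak{sl}_n$-semiinvariant (indeed $\mathfrak{sl}_n$-invariant), the whole $b$-function calculation — the module $B^d_{\underline s}$, the operators $P_{\hat c}$, the defining equation (\ref{eq:genbfseqn}) — can be replaced by its $\mathfrak{sl}_n$-invariant part without changing $b_d(s)$; one takes $\mathfrak{sl}_n$-invariants of (\ref{eq:genbfseqn}), using that taking invariants is exact and that $\mathcal{ZU}(\mathfrak{sl}_n)$-type averaging produces invariant operators $P_{\hat c}$ out of arbitrary ones. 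The upshot should be a reduction, via a lemma to the effect that $b_d(s)$ equals the $b$-function of the single rational function $p_0$ on the prehomogeneous-type space with coordinates $p_0,(p_{ij})$, or more precisely of a suitable tuple built from $p_0$ and the $p_{ij}$, on the polynomial ring $\mathbb{C}[p_0,p_{ij}]$. On that smaller ambient space the relations (\ref{eq:rel}) express every $d_K$ as $p_0^{1-k}$ times a polynomial in the $p_{ij}$, and now we are in a setting where the action of the residual group (essentially $\GL_{n-1}\times\GL_{m-n}$ acting on the ``matrix'' $(p_{ij})$ of size $(n-1)\times(m-n)$, plus scaling) is again of the type already understood, so that a Cayley-type identity and a Capelli/Fourier-transform eigenvalue computation — exactly as in Theorem~\ref{thm:Pds} but on the reduced space — produces the extra factor $(s+m-n+1)$ (after renormalization $(s+(n-1))$), completing the list of roots.

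Concretely the steps, in order, would be: (1) state and prove an equivalence lemma — referenced above as \textrm{Lemma~\ref{lem:equivdefaps}} — saying that the defining equation (\ref{eq:genbfseqn}) for $b_d(s)$ can be taken with $\mathfrak{sl}_n$-invariant data, and deduce that $b_d(s)$ coincides with the $b$-function of an explicit tuple of elements of $\mathbb{C}[p_0,p_{ij}]$; (2) identify that tuple using (\ref{eq:rel}) — up to the harmless units $p_0^{1-k}$ its entries are the minors of all sizes of the $(n-1)\times(m-n)$ generic matrix $(p_{ij})$, together with $p_0$ itself; (3) on this reduced problem invoke the almost-square / general minors machinery again — or more efficiently, run the Fourier-transform argument of Theorem~\ref{thm:Pds} directly — to bound the reduced $b$-function from above by $\prod_{i=m-n+1}^m(s+i)$, which reproves the upper bound intrinsically, and (4) supply the lower bound on the reduced space: since on $\mathbb{C}[p_0,p_{ij}]$ the relevant ideal has a generator $p_0$ that can be set to $1$, one localizes at $p_0$, where $d_{[n]}$ becomes a unit and the remaining $d_K$ become the genuine minors of $(p_{ij})$, so by induction on $m$ (keeping $n$ fixed? no — on the size) one gets the missing root from the $H^{r}$-nonvanishing of \cite{witt},\cite{raicu-weyman-witt} as in Theorem~\ref{thm:n+1byn}, now with $r$ equal to the codimension in the reduced space, which is $2$. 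I expect the main obstacle to be step~(1): making precise and correct the passage from the full $b$-function equation to its $\mathfrak{sl}_n$-invariant shadow — one must check that averaging the operators $P_{\hat c}$ over $\mathfrak{sl}_n$ (using that $\mathfrak{sl}_n$ is reductive and acts locally finitely on $\mathcal{D}_X$) does not raise the degree in $s$ and does not destroy (\ref{eq:genbfseqn}), and that conversely an invariant solution on the smaller space lifts, so that the two $b$-functions are genuinely equal rather than merely one dividing the other. Everything downstream (the Cayley identity on $(p_{ij})$, the Capelli eigenvalue bookkeeping, the local-cohomology input) is routine given the tools already assembled in Sections~\ref{sec:preliminaries}--\ref{sec:boundingbfun}.
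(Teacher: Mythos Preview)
Your setup is right up to and including the reduction to the single missing root $-m$ of $b_d(s)$ (your ``$-m+n-1$'' is an arithmetic slip; $-(m-n+1)-(n-1)=-m$) and the decision to exploit $\mf{sl}_n$-invariance via the tuple $p=(p_0,p_{ij})$. But both substantive steps you propose have genuine gaps. In step~(1) you assert that $b_d(s)$ equals the $b$-function of some tuple on the polynomial ring $\bb{C}[p_0,p_{ij}]$: the natural candidates fail --- the coordinate tuple $(p_0,p_{ij})$ cuts out a smooth point and has $b$-function $s+1+n(m-n)$, while the full list of $d_K$ expressed via (\ref{eq:rel}) involves factors $p_0^{1-k}$, which are not polynomials and not ``harmless units'' in $\bb{C}[p_0,p_{ij}]$ --- and the paper proves no such equality. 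Your step~(4) cannot work as written: localizing at $p_0=d_{[n]}$ makes one generator of $I$ a unit, so the ideal becomes the whole ring and there is no local cohomology to detect. The paper in fact records (in the Conjecture immediately following the theorem) that it could not push the $F_{\a}$/local-cohomology approach through for $m>n+1$.

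What the paper actually does is different in kind. It defines an auxiliary polynomial $a_p(s)$ (Definition~\ref{def:defaps}) as the minimal polynomial for which $a_p(s)\cdot p^{\ul{s}}$ lies in the $\bb{C}[\ul{s}]$-span of the elements $\p_K\cdot p^{\ul{s}+\hat{c}}$ with $|\hat{c}|=1$; here the $\p_K$ are still the degree-$n$ operators on the original space $X$, so $a_p(s)$ is \emph{not} a $b$-function on any smaller ambient space. Lemma~\ref{lem:equivdefaps} recasts this condition in terms of arbitrary operators $Q_{\hat{c}}\in\D_X[\ul{s}]$, and combined with the specialization (\ref{eq:specialization}) this yields $a_p(s)\mid b_d(s)$. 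The idea that replaces your step~(4) is a \emph{second} specialization, $s_{ij}\mapsto -1$ for all $i,j$: the technical core is Lemma~\ref{lem:QKc=0}, which shows that after this substitution the only surviving coefficient $Q^0_{K,\hat{c}}$ is the one with $K=[n]$ and $\hat{c}=\hat{e}$, and a direct computation using $D_d$ identifies that coefficient with $P_d(s_0-n(m-n))$. This gives $P_d(s)\mid a_p(s)$, and the chain $P_d\mid a_p\mid b_d\mid P_d$ closes the argument.
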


We know by Proposition~\ref{prop:DfgivesPfs} and Theorem~\ref{thm:Pds} that $b_d(s)$ divides $\prod_{i=m-n+1}^m (s+i)$. By induction, we also know from (\ref{eq:divinduction}) that $b_d(s)$ is divisible by $\prod_{i=m-n+1}^{m-1} (s+i)$, so we would be done if we can show that $-m$ is a root of $b_d(s)$. This would follow from Proposition~\ref{prop:rootbfun} if we could prove the following:

\begin{conjecture}
 If we associate as in Section~\ref{subsec:lowerbounds} the $\D$-modules $F_{\a}$, $\a\in\bb{Z}$, to the tuple $d$ of maximal minors of the generic $m\times n$ matrix, then there exists a strict inclusion $F_{-m+1}\subsetneq F_{-m}$.
\end{conjecture}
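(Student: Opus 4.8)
The goal is to verify the strict inclusion $F_{-m+1}\subsetneq F_{-m}$ for the tuple $d=(d_K)$ of maximal minors, which by Proposition~\ref{prop:rootbfun} produces $-m$ as a root of $b_d(s)$ and thereby completes the proof of Theorem~\ref{thm:bfunmaxlminors}. The plan is to exploit the residual $\SL_n$-symmetry: since each $d_K$ is $\mf{sl}_n$-invariant, every $F_\a$ is a $\D_X^{\mf{sl}_n}$-module, and in fact the generators $d^{\ul\a}$ of $F_\a$ lie in the invariant ring $S^{\mf{sl}_n}=\bb{C}[d_K]$, which by the discussion around~(\ref{eq:defpij})--(\ref{eq:slninvariantsS_p}) is the homogeneous coordinate ring of the Grassmannian $\bb{G}(n,m)$, with explicit Plücker-type coordinates $p_0,p_{ij}$. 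The first step is to pass to the localization at $p_0\cdot\prod_{i,j}p_{ij}$ and use~(\ref{eq:slninvariantsS_p}): the $\mf{sl}_n$-invariant part of $(S_{d_K\text{'s}})^{\mf{sl}_n}$ has the monomial basis $p_0^{c_0}\prod p_{ij}^{c_{ij}}$ with $c_0,c_{ij}\in\bb{Z}$. The strategy is then to show that $F_{-m}^{\mf{sl}_n}$ (the $\mf{sl}_n$-invariants of $F_{-m}$) strictly contains $F_{-m+1}^{\mf{sl}_n}$, which suffices since $F_{-m+1}=F_{-m}$ would force equality on invariants.

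The key computation is to understand, as precisely as possible, which invariant monomials $p_0^{c_0}\prod p_{ij}^{c_{ij}}$ lie in $F_\a^{\mf{sl}_n}$. Because $d^{\ul\a}$ is a product of minors of total degree (in the $x_{ij}$) equal to $n\cdot\a$, and because $\D_X$ acts preserving a $\bb{Z}$-grading shifted by derivatives, one expects $F_\a^{\mf{sl}_n}$ to be spanned (over $\D_X^{\mf{sl}_n}$, or after taking invariants) by those monomials whose total degree in the $x_{ij}$ is bounded below in a way governed by $\a$; concretely, using~(\ref{eq:rel}) to express a general $d_K$ as $p_0^{1-k}\det(p_{i_\bullet j_\bullet})$, the generators $d^{\ul\a}$ are Laurent monomials in the $p$'s of a controlled shape. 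The plan is: (i) identify a distinguished invariant element — I would try the ``most negative'' product, something like $d_{[n]}^{-m}$ or more precisely $p_0^{-m}$ itself, or a suitable Laurent monomial of total $p_0,p_{ij}$-degree exactly $-m$ — and show it lies in $F_{-m}$; (ii) show it does \emph{not} lie in $F_{-m+1}$. For (ii) the natural mechanism is again local cohomology: the union $\bigcup_\a F_\a$ is $S_{\prod d_K}$, and the obstruction to $F_{-m+1}=F_{-m}$ should be detected by nonvanishing of an appropriate local cohomology module of $S$ (or of $S^{\mf{sl}_n}$, i.e. of the cone over $\bb{G}(n,m)$) supported in the maximal-minors ideal, in analogy with how Corollary~\ref{cor:rootfromloccoh} handled the almost-square case. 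Indeed the codimension of $Z_{m,n}$ is $m-n+1$, so Corollary~\ref{cor:rootfromloccoh} only directly yields the root $-(m-n+1)$ unless all larger integral roots are already known; the point of this conjecture is precisely to reach the extreme root $-m$ where that corollary does not apply, so a genuinely new argument via the $\SL_n$-reduction is needed.

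The main obstacle, and the reason this is stated as a conjecture rather than proved, is controlling $F_{-m+1}$ from above: one must rule out that some combination of invariant differential operators applied to the generators $d^{\ul\a}$ with $|\ul\a|=-m+1$ produces the target element. Equivalently, one needs a clean description of $F_\a$ as a $\D_X$-module — ideally a presentation or at least its $\GL_m\times\GL_n$-socle/cosocle structure — which amounts to understanding the $\D$-module generated by a power of the maximal-minor ideal, a question tied to the $\D$-module structure on local cohomology of Grassmannians. If such a description is available (e.g. via the known decomposition of $S_{\prod d_K}$ or $H^\bullet_I(S)$ into simple $\D$-modules, cf.\ the cited work of Raicu--Weyman--Witt and Raicu on $\D$-modules supported on determinantal varieties), then one would: express $F_\a$ as a sum of simple $\GL_m\times\GL_n$-equivariant $\D$-submodules; observe that passing from $\a=-m$ to $\a=-m+1$ must drop at least one simple summand (the one whose ``lowest weight'' forces the exponent $-m$, detected by the eigenvalue computation already done in Theorem~\ref{thm:Pds} showing $P_d(-m)=0$); and conclude $F_{-m+1}\subsetneq F_{-m}$. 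I would therefore organize the proof around matching the root $-m$ of $P_d(s)$ with the appearance of a specific simple $\D$-module summand in $F_{-m}$ that is absent from $F_{-m+1}$, reducing everything to a multiplicity count in the $\GL_m\times\GL_n$-decomposition of these $\D$-modules.
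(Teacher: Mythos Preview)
The statement you are attempting to prove is a \emph{conjecture} in the paper, not a theorem: there is no proof of it to compare against. Immediately after stating it the authors write ``We weren't able to verify this conjecture when $m>n+1$, so we take a different approach.'' Your proposal is not a proof either---you correctly identify the obstacle (controlling $F_{-m+1}$ from above, i.e.\ ruling out that invariant operators applied to generators of $F_{-m+1}$ reach the target element), but you do not overcome it. The suggestion to match simple $\D$-module summands in $F_{-m}$ versus $F_{-m+1}$ is reasonable in spirit, but it requires as input a complete description of the $\D$-module filtration $F_\a$, which is exactly what is missing.

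What is worth noting is that your instinct to pass to $\mf{sl}_n$-invariants and work with the coordinates $p_0,p_{ij}$ is precisely the mechanism the paper uses to \emph{bypass} the conjecture. Rather than comparing $F_{-m}$ and $F_{-m+1}$ inside $S_{\prod d_K}$, the paper introduces an auxiliary polynomial $a_p(s)$ (Definition~4.2) attached to the tuple $p=(p_0,p_{ij})$, and proves the chain of divisibilities $P_d(s)\mid a_p(s)\mid b_d(s)\mid P_d(s)$, forcing equality throughout. The divisibility $a_p(s)\mid b_d(s)$ comes from specializing the $b_d$ functional equation along $s_K=0$ for $|K\cap[n]|\le n-2$; the divisibility $P_d(s)\mid a_p(s)$ is obtained by a further specialization $s_{ij}=-1$ and a weight argument (Lemma~4.4) isolating a single surviving term. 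This circumvents entirely the need to understand the $\D$-module structure of the $F_\a$. So your $\SL_n$-reduction idea is on the right track for proving Theorem~\ref{thm:bfunmaxlminors}, but the paper deploys it at the level of functional equations and specializations rather than at the level of $\D$-module inclusions, and the conjecture itself remains open.
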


We weren't able to verify this conjecture when $m>n+1$, so we take a different approach. We consider the $(1+n\cdot(m-n))$-tuple
\[p=(p_0,p_{ij})\in S^{1+n\cdot(m-n)},\]
as in (\ref{eq:defpij}) and associate to $p_0$ a variable $s_0$, and to each $p_{ij}$ a variable $s_{ij}$. We write $\ul{s}=(s_0,s_{ij})$ and consider $B_{\ul s}^p$ as defined in (\ref{eq:defBs}). Inside $B_{\ul s}^p$, we consider the $\bb{C}[\ul{s}]$-submodule
\begin{equation}\label{eq:defAs}
A_{\ul s}^p = \bb{C}[\ul{s}]\cdot\left\{p^{\ul{s}+\hat{c}}=p_0^{s_0+c_0}\cdot\prod_{i\in[n],j\in[m]\setminus[n]} p_{ij}^{s_{ij}+c_{ij}}:\hat{c}=(c_0,c_{ij})\in\bb{Z}^{1+n\cdot(m-n)}\right\}.
\end{equation}
A more invariant way of describing $A_{\ul s}^p$ follows from the discussion in Section~\ref{subsec:invDmods}:
\begin{equation}\label{eq:As=invBs}
A_{\ul s}^p\rm{ consists precisely of the }\mf{sl}_n\rm{-invariants inside the }\D_X\rm{-module }B_{\ul s}^p. 
\end{equation}
It follows that $A_{\ul s}^p$ is in fact a $\D_X^{\mf{sl}_n}$-module. Since $\p_K\in\D_X^{\mf{sl}_n}$ for every $K\in{[m]\choose n}$, we can make the following:
\begin{definition}\label{def:defaps}
 We let $s=s_0+\sum_{i,j}s_{ij}$ and define $a_p(s)$ to be the monic polynomial of the lowest degree in $s$ for which $a_p(s)\cdot p^{\ul s}$ belongs to
 \[\bb{C}[\ul s]\cdot\left\{\p_K\cdot p^{\ul{s}+\hat{c}}:K\in{[m]\choose n},|\hat{c}|=1\right\}.\]
\end{definition}

With $P_d(s)$ as computed in Theorem~\ref{thm:Pds} we will prove that
\begin{equation}\label{eq:apsdividesbds}
 a_p(s)\rm{ divides }b_d(s),\rm{ and}
\end{equation}
\begin{equation}\label{eq:Pdsdividesaps}
 P_d(s)\rm{ divides }a_p(s).
\end{equation}
Combining (\ref{eq:apsdividesbds}) with (\ref{eq:Pdsdividesaps}), and with the fact that $b_d(s)$ divides $P_d(s)$, concludes the proof of Theorem~\ref{thm:bfunmaxlminors}.

It follows from (\ref{eq:slninvariantsS_p}) that the elements $p^{\ul{s}+\hat{c}}$ in (\ref{eq:defAs}) in fact give a basis of $A_{\ul s}^p$ as a $\bb{C}[\ul{s}]$-module. We have
\[A_{\ul s}^p=\bigoplus_{\a\in\bb{Z}} A_{\ul s}^p(\a)\]
which we can think of as a weight space decomposition, where
\begin{equation}\label{eq:defAsalpha}
 A_{\ul s}^p(\a)=\bb{C}[\ul{s}]\cdot\left\{p^{\ul{s}+\hat{c}}:|\hat{c}|=\a\right\}
\end{equation}
is the set of elements in $A_{\ul s}^p$ on which $g\in\mf{gl}_n$ acts by multiplication by $\rm{tr}(g)\cdot(s+\a)$, and in particular each $A_{\ul s}^p(\a)$ is preserved by $\D_X^{\mf{gl}_n}$. Using (\ref{eq:rel}) we obtain that multiplication by $d_K$ sends $A_{\ul s}^p(\a)$ into $A_{\ul s}^p(\a+1)$. Since $d_K\cdot\p_K\in\D_X^{\mf{gl}_n}$, it then follows that multiplication by $\p_K$ sends $A_{\ul s}^p(\a+1)$ into $A_{\ul s}^p(\a)$. We obtain:

\begin{lemma}\label{lem:equivdefaps}
 The polynomial $a_p(s)$ is the monic polynomial of lowest degree for which there exist a finite collection of tuples $\hat{c}\in\bb{Z}^{1+n\cdot(m-n)}$ with $|\hat{c}|>0$ and corresponding operators $Q_{\hat{c}}\in\D_X[\ul s]$ such that
\begin{equation}\label{eq:Qcopstoaps}
\sum_{\hat{c}} Q_{\hat{c}}\cdot p^{\ul{s}+\hat{c}}=a_p(s)\cdot p^{\ul s}. 
\end{equation}
\end{lemma}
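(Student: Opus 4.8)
The plan is to prove the two divisibilities $b(s)\mid a_p(s)$ and $a_p(s)\mid b(s)$, where $b(s)$ denotes the monic polynomial of least degree for which a relation of the shape (\ref{eq:Qcopstoaps}) holds. One direction is formal: a relation $a_p(s)\cdot p^{\ul s}=\sum_{K,\,|\hat c|=1}\gamma_{K,\hat c}(\ul s)\cdot\p_K\cdot p^{\ul s+\hat c}$ witnessing $a_p(s)$ as in Definition~\ref{def:defaps} becomes an instance of (\ref{eq:Qcopstoaps}) once, for each $\hat c$ with $|\hat c|=1$, one collects the operator $Q_{\hat c}=\sum_K\gamma_{K,\hat c}(\ul s)\,\p_K\in\D_X[\ul s]$; since all these $\hat c$ satisfy $|\hat c|=1>0$, minimality of $b(s)$ yields $b(s)\mid a_p(s)$.

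For $a_p(s)\mid b(s)$, I would start from a relation $\sum_{\hat c}Q_{\hat c}\cdot p^{\ul s+\hat c}=b(s)\cdot p^{\ul s}$ as in (\ref{eq:Qcopstoaps}), all $|\hat c|>0$, and make two reductions on the $Q_{\hat c}$. First, since $p_0$ and the $p_{ij}$ are $\SL_n$-invariant, every $p^{\ul s+\hat c}$ and $b(s)\cdot p^{\ul s}$ is $\SL_n$-fixed inside $B_{\ul s}^p$; averaging the relation over $\SL_n$ (each $Q_{\hat c}$ lying in a finite-dimensional subrepresentation of $\D_X[\ul s]$ for the conjugation action, so the Reynolds operator applies) replaces $Q_{\hat c}$ by its $\mf{sl}_n$-invariant part, so I may assume $Q_{\hat c}\in\D_X^{\mf{sl}_n}[\ul s]$. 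Second, I would grade $\D_X^{\mf{sl}_n}$ by the eigenvalues of $\operatorname{ad}\tau(\Id_n)$, equivalently by $\deg_x-\deg_\p$: this grading takes values in $n\bb{Z}$, is preserved by $\mf{sl}_n$, and an $\mf{sl}_n$-invariant operator homogeneous of degree $n\ell$ sends $A_{\ul s}^p(\a)$ into $A_{\ul s}^p(\a+\ell)$. Since $b(s)\cdot p^{\ul s}\in A_{\ul s}^p(0)$, projecting the relation onto this summand kills every graded component of $Q_{\hat c}$ except the one of degree $-n|\hat c|$, producing a relation $\sum_{\hat c}\tilde Q_{\hat c}\cdot p^{\ul s+\hat c}=b(s)\cdot p^{\ul s}$ in which each $\tilde Q_{\hat c}$ is $\mf{sl}_n$-invariant, homogeneous of degree $-n|\hat c|$, with coefficients in $\bb{C}[\ul s]$.

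I would then prove, by induction on $N=\max\{|\hat c|:\tilde Q_{\hat c}\neq0\}$, that any relation of this last form forces $b(s)\cdot p^{\ul s}$ to lie in the $\bb{C}[\ul s]$-span $V$ of $\{\p_K\cdot p^{\ul s+\hat c'}:K\in{[m]\choose n},\,|\hat c'|=1\}$, which is exactly the membership required by Definition~\ref{def:defaps} and gives $a_p(s)\mid b(s)$. For $N=1$ every $\tilde Q_{\hat c}$ is homogeneous of degree $-n$, and the key structural input is a differential-operator form of the first fundamental theorem for $\SL_n$: the degree $-n$ part of $\D_X^{\mf{sl}_n}$ equals $\sum_K\D_X^{\mf{gl}_n}\cdot\p_K$ (obtained by lifting the corresponding statement for symbols, inducting on the order of the operator). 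Writing $\tilde Q_{\hat c}=\sum_K R_{\hat c,K}\,\p_K$ with $R_{\hat c,K}\in\D_X^{\mf{gl}_n}[\ul s]$, one has $\tilde Q_{\hat c}\cdot p^{\ul s+\hat c}=\sum_K R_{\hat c,K}\cdot(\p_K\cdot p^{\ul s+\hat c})\in V$, since each $\p_K\cdot p^{\ul s+\hat c}$ is a defining generator of $V$ and $V$ is a $\D_X^{\mf{gl}_n}[\ul s]$-submodule of $A_{\ul s}^p(0)$ — itself a routine induction on the order of the operator, using the same structural fact. For $N\geq2$ I would lower $N$ by one via $p^{\ul s+\hat c}=p_0\cdot p^{\ul s+(\hat c-e_0)}$, with $e_0$ the unit tuple in the $p_0$-slot: a term with $|\hat c|=N$ rewrites as $(\tilde Q_{\hat c}\circ m_{p_0})\cdot p^{\ul s+(\hat c-e_0)}$, where $m_{p_0}$ is multiplication by $p_0$, the operator $\tilde Q_{\hat c}\circ m_{p_0}$ is $\mf{sl}_n$-invariant and homogeneous of degree $-n(N-1)$, and $|\hat c-e_0|=N-1\geq1$; collecting terms puts the relation into the same form with smaller $N$, and the inductive hypothesis finishes it.

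The $\SL_n$-averaging, the weight projection, and the induction on $N$ are bookkeeping; the substance of the argument is concentrated in the structural identity $(\D_X^{\mf{sl}_n})_{-n}=\sum_K\D_X^{\mf{gl}_n}\cdot\p_K$ and the $\D_X^{\mf{gl}_n}$-stability of $V$ derived from it. This is an invariant-theoretic fact about $\SL_n$ acting on the Weyl algebra of $X_{m,n}$ rather than a formal rewriting, and making it precise is the main obstacle.
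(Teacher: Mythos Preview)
Your proposal is correct and follows essentially the same strategy as the paper. Both arguments (i) pass to $\mf{sl}_n$-invariant operators by averaging, (ii) use weight considerations to project onto $A_{\ul s}^p(0)$, and (iii) rely on the same invariant-theoretic structural fact about $\D_X^{\mf{sl}_n}$ to conclude. The paper phrases step (iii) as the single assertion that every element of $\D_X^{\mf{sl}_n}$ is a linear combination of products $Q_1\cdot Q_2\cdot Q_3$ with $Q_1$ a product of $\p_K$'s, $Q_2$ a product of $d_K$'s, and $Q_3\in\D_X^{\mf{gl}_n}$; this handles all $|\hat c|>0$ at once, since the outermost $\p_K$ in $Q_1$ lands the term in $V$. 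Your version replaces this by the degree-$(-n)$ statement $(\D_X^{\mf{sl}_n})_{-n}=\sum_K\D_X^{\mf{gl}_n}\cdot\p_K$ together with an induction on $N=\max|\hat c|$ via multiplication by $p_0=d_{[n]}$; the induction step is exactly what the paper's $Q_2$-factor accomplishes in one stroke. Both formulations descend from the first fundamental theorem for $\SL_n$ at the associated-graded level, and neither the paper nor your proposal proves this structural input---you are right that it is the substantive point.

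One small remark: for your $N=1$ step to go through cleanly you want the form $(\D_X^{\mf{sl}_n})_{-n}=\sum_K\p_K\cdot\D_X^{\mf{gl}_n}$ (with $\p_K$ on the left), so that $\tilde Q_{\hat c}\cdot p^{\ul s+\hat c}=\sum_K\p_K\cdot(R_K\cdot p^{\ul s+\hat c})$ lands directly in $V$ once $R_K\cdot p^{\ul s+\hat c}$ is expanded in $A_{\ul s}^p(1)$; this avoids having to separately verify the $\D_X^{\mf{gl}_n}$-stability of $V$. The two forms are of course equivalent, but the left-handed one matches the paper's ordering $Q_1\cdot Q_2\cdot Q_3$ and makes the argument a line shorter.
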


\begin{proof} Using the fact that $p^{\ul{s}+\hat{c}}$ and $a_p(s)\cdot p^{\ul s}$ are $\mf{sl}_n$-invariants, we may assume that $Q_{\hat{c}}\in\D_X^{\mf{sl}_n}[\ul{s}]$. Since every element in $\D_X^{\mf{sl}_n}$ can be expressed as a linear combination of products $Q_1\cdot Q_2\cdot Q_3$, where $Q_1$ is a product of $\p_K$'s, $Q_2$ is a product of $d_K$'s, and $Q_3\in\D_X^{\mf{gl}_n}$, the conclusion follows from the observation that $\D_X^{\mf{gl}_n}$ preserves each weight space, $d_K$ increases the weight by one, while $\p_K$ decreases the weight by one. 
\end{proof}

We are now ready to prove that $a_p(s)$ divides $b_d(s)$:
\begin{proof}[Proof of (\ref{eq:apsdividesbds})]
 Using (\ref{eq:genbfseqn}) with $\ul{s}=(s_K)_{K\in{[m]\choose n}}$ we can find a finite collection of tuples $\hat{c}\in\bb{Z}^{[m]\choose n}$ with $|\hat{c}|=1$, and corresponding operators $P_{\hat{c}}\in\D_X[\ul{s}]$ such that we have an equality inside $B_{\ul{s}}^d$:
\begin{equation}\label{eq:opforbds}
 \sum_{\hat{c}}\hat{c}_{\ul s}\cdot P_{\hat{c}}\cdot d^{\ul{s}+\hat{c}}=b_d(s)\cdot d^{\ul s}. 
\end{equation}
Note that by (\ref{eq:defchat}), setting $s_K=0$ makes $\hat{c}_{\ul s}=0$ whenever $\hat{c}$ is such that $c_K<0$. We apply to (\ref{eq:opforbds}) the specialization
\begin{equation}\label{eq:specialization}
s_K=0\rm{ whenever }|K\cap [n]|\leq n-2,\ s_{[n]}=s_0,\rm{ and }s_{[n]\setminus\{i\}\cup\{j\}}=s_{ij}\rm{ for }i\in[n],j\in[m]\setminus[n].
\end{equation}
We then use the equalities $p_0=d_{[n]}$, $p_{ij}=d_{[n]\backslash\{i\}\cup\{j\}}$ and (\ref{eq:rel}), and regroup the terms to obtain (with an abuse of notation) a finite collection of tuples $\hat{c}=(c_0,c_{ij})\in\bb{Z}^{1+n\cdot(m-n)}$ with $|\hat{c}|=1$, and corresponding operators $Q_{\hat{c}}\in\D_X[\ul{s}]$, where $\ul{s}$ denotes now the tuple of variables $(s_0,s_{ij})$, such that the following equality holds in~$B_{\ul{s}}^p$:
\[\sum_{\hat{c}} Q_{\hat{c}}\cdot p^{\ul{s}+\hat{c}}=b_d(s)\cdot p^{\ul s}.\]
Using Lemma~\ref{lem:equivdefaps} it follows that $a_p(s)$ divides $b_d(s)$ as desired.
\end{proof}

We conclude by proving (\ref{eq:Pdsdividesaps}), but before we establish a preliminary result. For $|\hat{c}|=1$ we observe that $p^{\ul{s}+\hat{c}}\in A_{\ul{s}}^p(1)$, thus $\p_K\cdot p^{\ul{s}+\hat{c}}$ can be expressed as a $\bb{C}[\ul{s}]$-linear combination of the basis elements of $A_{\ul{s}}^p(0)$. We define $Q_{K,\hat{c}}\in\bb{C}[\ul{s}]$ to be the coefficient of $p^{\ul{s}}$ in this expression, and write $\hat{e}=(1,0^{n\cdot(m-n)})$.

\begin{lemma}\label{lem:QKc=0}
 Write $Q_{K,\hat{c}}^0\in\bb{C}[s_0]$ for the result of the specialization $s_{ij}=-1$ for all $i\in[n],j\in[m]\setminus[n]$, applied to $Q_{K,\hat{c}}$. We have that $Q_{K,\hat{c}}^0=0$ unless $K=[n]$ and $\hat{c}=\hat{e}$.
\end{lemma}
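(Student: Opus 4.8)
The plan is to push the specialization $s_{ij}=-1$ down to the level of $\D_X$-modules, turning $Q^0_{K,\hat c}$ into an explicit coefficient-extraction problem in a one-parameter module, and then to eliminate all but one case in two stages: a localization argument that forces $c_{ij}\le 0$ for all $i,j$, and a torus-weight argument that pins down $K$ and $\hat c$. First I would observe that sending every $s_{ij}$ to $-1$ defines a $\D_X$-linear map $\phi\colon B_{\ul s}^p\to B^{(0)}:=S_{p_0\prod p_{ij}}[s_0]\cdot p_0^{s_0}$ with $\phi\bigl(p^{\ul s+\hat c}\bigr)=p_0^{s_0+c_0}\prod_{i,j}p_{ij}^{c_{ij}-1}$. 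Because $p_0$ and the $p_{ij}$ are algebraically independent, these images are $\bb{C}[s_0]$-linearly independent, so $\phi$ identifies $A_{\ul s}^p\otimes_{\bb{C}[\ul s]}\bb{C}[s_0]$ with their $\bb{C}[s_0]$-span, and $Q^0_{K,\hat c}$ becomes exactly the coefficient of $p_0^{s_0}\prod_{i,j}p_{ij}^{-1}$ in $\p_K\cdot\bigl(p_0^{s_0+c_0}\prod_{i,j}p_{ij}^{c_{ij}-1}\bigr)\in B^{(0)}$.

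Next I would run the localization argument. Set $T=\{(i,j)\colon c_{ij}\le 0\}$. Then $p_0^{s_0+c_0}\prod p_{ij}^{c_{ij}-1}$ already lies in $S^{\mf{sl}_n}_{p_0\cdot\prod_{(i,j)\in T}p_{ij}}[s_0]\cdot p_0^{s_0}$ (the exponents $c_{ij}-1$ are $\ge 0$ off $T$, so only $p_0$ and the $p_{ij}$ with $(i,j)\in T$ get inverted, and it is $\mf{sl}_n$-invariant). Since $\p_K\in\D_X^{\mf{sl}_n}$ preserves $\mf{sl}_n$-invariance and also preserves any subring $S_g[s_0]\cdot p_0^{s_0}$ with $p_0\mid g$ (immediate from the Leibniz rule), the element $\p_K\cdot(p_0^{s_0+c_0}\prod p_{ij}^{c_{ij}-1})$ stays in $S^{\mf{sl}_n}_{p_0\cdot\prod_{(i,j)\in T}p_{ij}}[s_0]\cdot p_0^{s_0}$. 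Now the crucial point: by (\ref{eq:rel}) every minor $d_K$ — hence every element of $S^{\mf{sl}_n}=\bb{C}[d_K]$ — lies in $\bb{C}[p_0^{\pm1},p_{ij}]$, with \emph{non-negative} exponents in all the $p_{ij}$; therefore $S^{\mf{sl}_n}_{p_0\cdot\prod_{(i,j)\in T}p_{ij}}\subseteq\bb{C}\bigl[p_0^{\pm1},(p_{ij}^{\pm1})_{(i,j)\in T},(p_{ij})_{(i,j)\notin T}\bigr]$. Expanding $\p_K\cdot(p_0^{s_0+c_0}\prod p_{ij}^{c_{ij}-1})$ in the monomial basis $\{p_0^{s_0+b_0}\prod p_{ij}^{b_{ij}-1}\}$ via (\ref{eq:slninvariantsS_p}), every basis monomial occurring with nonzero coefficient must satisfy $b_{ij}-1\ge 0$ for all $(i,j)\notin T$. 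Taking $b_0=0$ and all $b_{ij}=0$ then shows $Q^0_{K,\hat c}=0$ unless $T$ is all of $[n]\times([m]\setminus[n])$, i.e.\ unless $c_{ij}\le 0$ for every $i,j$.

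Finally I would bring in weights. If $Q^0_{K,\hat c}\ne 0$ then $p^{\ul s}$ genuinely appears in $\p_K\cdot p^{\ul s+\hat c}$; comparing weights for the maximal torus of $\GL_m$ (for which $p_0$, the $p_{ij}$ and $\p_K$ are all homogeneous), together with $|\hat c|=1$, forces $\sum_j c_{ij}=1$ for $i\in[n]\setminus K$, $\sum_j c_{ij}=0$ for $i\in K\cap[n]$, and $c_0=1-|[n]\setminus K|$. Combined with the previous step — all $c_{ij}\le 0$, hence $\sum_j c_{ij}\le 0$ — this forces $i\in K$ for every $i\in[n]$, so $[n]\subseteq K$ and thus $K=[n]$; then every $c_{ij}=0$ and $c_0=1$, i.e.\ $\hat c=\hat e$, which is the assertion.

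The main obstacle is really Step 2, and within it the observation that (\ref{eq:rel}) writes each $d_K$ as a \emph{polynomial} in the $p_{ij}$ times a power of $p_0$ alone, so that the $p_{ij}$ with $(i,j)\notin T$ never appear in denominators — this is exactly what lets the localization estimate bite, and I would state it carefully. The remaining delicate points are bookkeeping: that $\phi$ genuinely identifies $A_{\ul s}^p$ with the $\bb{C}[s_0]$-span of the explicit monomials (so that ``coefficient of $p^{\ul s}$'' is unaffected by the specialization), and that membership of a $\bb{C}[s_0]$-linear combination of distinct Laurent monomials in $p_0,p_{ij}$ in a localized invariant ring can be tested monomial by monomial; both follow from the algebraic independence of $p_0,p_{ij}$ and from (\ref{eq:slninvariantsS_p}).
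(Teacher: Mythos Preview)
Your proof is correct and follows essentially the same two-step strategy as the paper: first a localization/nonnegativity argument showing that $Q_{K,\hat c}^0=0$ whenever some $c_{i_0j_0}\ge 1$, then a torus-weight comparison to eliminate all remaining cases except $K=[n]$, $\hat c=\hat e$. The only difference worth noting is in the weight step: the paper uses a single one-parameter subgroup $T_t=\diag(t\cdot I_n,0)\subset\mf{gl}_m$, yielding the equation $c_0+(n-1)=|K\cap[n]|$, whereas you use the full maximal torus of $\GL_m$ and obtain the finer system $\sum_j c_{ij}=1-[i\in K]$ for $i\in[n]$; both reach the same conclusion once combined with $c_{ij}\le 0$.
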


\begin{proof} Since the specialization map commutes with the action of $\D_X$, we have that
 \[Q_{K,\hat{c}}^0\rm{ is the coefficient of }\frac{p_0^{s_0}}{\prod_{i,j} p_{ij}}\rm{ inside }\p_K\cdot\left(p_0^{s_0+c_0}\cdot\prod_{i,j} p_{ij}^{c_{ij}-1}\right).\]

Suppose first that $\hat{c}$ is a tuple with some entry $c_{i_0j_0}\geq 1$: we show that for any $K$, $Q_{K,\hat{c}}^0=0$. To see this, note that applying any sequence of partial derivatives to $p_0^{s_0+c_0}\cdot\prod_{i,j}p_{ij}^{c_{ij}-1}$ won't turn the exponent of $p_{i_0j_0}$ negative. Since $\p_K\in\D_X^{\mf{sl}_n}$, we may then assume that
\begin{equation}\label{eq:parKofproduct}
\p_K\cdot\left(p_0^{s_0+c_0}\cdot\prod_{i,j} p_{ij}^{c_{ij}-1}\right)=p_0^{s_0+d_0}\cdot\prod_{i,j}p_{ij}^{d_{ij}}\cdot F, 
\end{equation}
where $d_0,d_{ij}\in\bb{Z}$, $d_{i_0j_0}=0$, and $F\in S^{\mf{sl}_n}[s_0]$ is a polynomial in $s_0$ whose coefficients are $\mf{sl}_n$-invariant. Since $S^{\mf{sl}_n}$ is generated by the maximal minors $d_K$, we can apply (\ref{eq:rel}) to rewrite the right hand side of (\ref{eq:parKofproduct}) as a $\bb{C}[s_0]$-linear combination of $p_0^{s_0+e_0}\cdot\prod_{i,j}p_{ij}^{e_{ij}}$ where $e_0,e_{ij}\in\bb{Z}$ and $e_{i_0j_0}\geq 0$. We conclude that $Q_{K,\hat{c}}^0=0$.

From now on we assume that $\hat{c}$ is has all $c_{ij}\leq 0$. Since $|\hat{c}|=1$, we must have $c_0\geq 1$. We look at weights under the action of the subalgebra
\[\left\{T_t=\begin{pmatrix}
t\cdot I_n & 0 \\
0 & 0
\end{pmatrix}\,:\, t\in \bb{C}\right\}\subset \mf{gl}_m,\]
and note that
\[T_t\cdot\left(p_0^{s_0+c_0}\cdot\prod_{i,j} p_{ij}^{c_{ij}-1}\right) = t\cdot\left((s_0+c_0)\cdot n+(n-1)\sum_{i,j}(c_{ij}-1)\right)\cdot\left(p_0^{s_0+c_0}\cdot\prod_{i,j} p_{ij}^{c_{ij}-1}\right),\]
\[T_t\bul\p_K = -t\cdot |K\cap[n]|\cdot\p_K,\rm{ using notation }(\ref{eq:bulactiong}),\rm{ and}\]
\[T_t\cdot\left(\frac{p_0^{s_0}}{\prod_{i,j} p_{ij}}\right) = t\cdot\left(s_0\cdot n+(n-1)\sum_{i,j}(-1)\right)\cdot\left(\frac{p_0^{s_0}}{\prod_{i,j} p_{ij}}\right).\]
It follows that $Q_{K,\hat{c}}^0$ can be non-zero only when
\[(s_0+c_0)\cdot n+(n-1)\sum_{i,j}(c_{ij}-1)-|K\cap[n]|=s_0\cdot n+(n-1)\sum_{i,j}(-1),\]
which using the fact that $c_0+\sum_{i,j}c_{ij}=1$ is equivalent to $c_0+(n-1)=|K\cap[n]|$. Since $c_0\geq 1$ this equality can only hold when $c_0=1$ (which then forces all $c_{ij}=0$), and $K=[n]$.
\end{proof}

\begin{proof}[Proof of (\ref{eq:Pdsdividesaps})]
Using Definition~\ref{def:defaps}, we can find finitely many tuples $\hat{c}\in\bb{Z}^{1+n\cdot(m-n)}$ with $|\hat{c}|=1$, and polynomials $P_{K,\hat{c}}\in\bb{C}[\ul{s}]$ for $K\in{[m]\choose n}$ such that
\begin{equation}\label{eq:PQaps}
\sum_{K,\hat{c}} P_{K,\hat{c}}\cdot\p_K\cdot p^{\ul{s}+\hat{c}}=a_p(s)\cdot p^{\ul{s}}. 
\end{equation}
Using the definition of $Q_{K,\hat{c}}$, we obtain
\[\sum_{K,\hat{c}} P_{K,\hat{c}}\cdot Q_{K,\hat{c}}=a_p(s).\]
Applying the specialization $s_{ij}=-1$ for all $i\in [n],j\in [m]\backslash[n]$, it follows from Lemma~\ref{lem:QKc=0} that
\[P_{[n],\hat{e}}^0\cdot Q_{[n],\hat{e}}^0=\sum_{K,\hat{c}} P_{K,\hat{c}}^0\cdot Q_{K,\hat{c}}^0=a_p(s_0-n\cdot(m-n)),\]
where $P_{K,\hat{c}}^0\in\bb{C}[s_0]$ is (just as $Q_{K,\hat{c}}^0$) the specialization of $P_{K,\hat{c}}$. We will show that $Q_{[n],\hat{e}}^0=P_d(s_0-n\cdot(m-n))$, from which it follows that $P_d(s_0-n\cdot(m-n))$ divides $a_p(s_0-n\cdot(m-n))$. Making the change of variable $s=s_0-n\cdot(m-n)$ proves that $P_d(s)$ divides $a_p(s)$, as desired.

To see that $Q_{[n],\hat{e}}^0=P_d(s_0-n\cdot(m-n))$, we consider the action of $D_d$ on $p^{\ul{s}}$: using (\ref{eq:Pds}), Theorem~\ref{thm:Pds}, and applying the specialization (\ref{eq:specialization}) as before, we obtain
\[\sum_{K\in{[m]\choose n}}\p_K\cdot d_K\cdot p^{\ul s}=D_d\cdot p^{\ul s}=P_d(s)\cdot p^{\ul s}.\]
Using (\ref{eq:rel}), we can rewrite the above equality as
\[\p_{[n]}\cdot p^{\ul{s}+\hat{e}}+\sum_{\substack{K\neq[n]\\\hat{c}\rm{ with }|\hat{c}|=1}}R_{K,\hat{c}}\cdot\p_K\cdot p^{\ul{s}+\hat{c}}=P_d(s)\cdot p^{\ul{s}},\]
for some $R_{K,\hat{c}}\in\bb{C}[\ul{s}]$. We now apply the same argument as we did to (\ref{eq:PQaps}): we consider the further specialization $s_{ij}=0$ and use Lemma~\ref{lem:QKc=0} to obtain $Q_{[n],\hat{e}}^0=P_d(s_0-n\cdot(m-n))$, which concludes our~proof.
\end{proof}

\section{The Strong Monodromy Conjecture for maximal minors and sub-maximal Pfaffians}\label{sec:SMC}

Let $X=\bb{C}^N$ and $Y\subset X$ a closed subscheme with defining ideal $I$. Consider a \defi{log resolution} $f:X'\to X$ of the ideal $I$ (or of the pair $(X,Y)$; see for instance \cite[Sec.~9.1.B]{lazarsfeld}), i.e. a proper birational morphism $f:X'\to X$ such that $I\mc{O}_{X'}$ defines an effective Cartier divisor $E$, $f$ induces an isomorphism $f:X'\setminus E\to X\setminus Y$, and the divisor $K_{X'/X}+E$ has simple normal crossings support. Write $E_j$, $j\in\mc{J}$, for the irreducible components of the support of $E$, and express
\[E=\sum_{j\in\mc{J}}a_j E_j,\quad K_{X'/X}=\sum_{j\in\mc{J}}k_j\cdot E_j.\]
The \defi{topological zeta function} of $I$ (or of the pair $(X,Y)$) is defined as \cites{denef-loeser,denef-loeser-jag,veys}
\begin{equation}\label{eq:deftopzeta}
 Z_I(s)=\sum_{\mc{I}\subseteq\mc{J}}\chi(E_{\mc{I}}^{\circ})\cdot\prod_{i\in\mc{I}}\frac{1}{a_i\cdot s+k_i+1},
\end{equation}
where $\chi$ denotes the Euler characteristic and $E_{\mc{I}}^{\circ}=(\bigcap_{i\in\mc{I}}E_i)\setminus(\bigcup_{i\notin\mc{I}}E_i)$. The topological zeta function is independent of the log resolution, and the Strong Monodromy Conjecture asserts that the poles of $Z_I(s)$ are roots of $b_I(s)$, and in an even stronger form that
\begin{equation}\label{eq:SMC}
b_I(s)\cdot Z_I(s)\rm{ is a polynomial.}
\end{equation}
We verify (\ref{eq:SMC}) for maximal minors and sub-maximal Pfaffians as a consequence of Theorems~\ref{thm:bfunskew} and~\ref{thm:bfunmaxlminors}, by taking advantage of the well-studied resolutions given by \defi{complete collineations} in the case of determinantal varieties, and \defi{complete skew forms} in the case of Pfaffian varieties \cites{vainsencher,thaddeus,johnson}.

\subsection{Maximal minors}

Let $m\geq n$ and $X=X_{m,n}$ denote the vector space of $m\times n$ matrices as before. Denote by $Y$ the subvariety of matrices of rank at most $n-1$, and let $I$ be the ideal of maximal minors defining $Y$. It follows from \cite[Cor.~4.5~and~Cor.~4.6]{johnson} that $I$ has a log resolution with $\mc{J}=\{0,\cdots,n-1\}$ and
\[E=\sum_{i=0}^{n-1}(n-i)\cdot E_i,\quad K_{X'/X}=\sum_{i=0}^{n-1}((m-i)(n-i)-1)\cdot E_i.\]
It follows that $k_i+1=(m-i)(n-i)$, and $a_i=n-i$ for $i=0,\cdots,n-1$, and therefore by our Theorem~\ref{thm:bfunmaxlminors} the denominator of every term in (\ref{eq:deftopzeta}) divides $b_I(s)$. This is enough to conclude (\ref{eq:SMC}).

\subsection{Sub-maximal Pfaffians}

Let $X=X_n$ be the vector space of $(2n+1)\times(2n+1)$ skew-symmetric matrices. Denote by $Y$ the subvariety of matrices of rank at most $2(n-1)$ and let $I$ denote the ideal of sub-maximal Pfaffians defining $Y$. As shown below, there is a log resolution of $I$ with $\mc{J}=\{0,\cdots,n-1\}$~and
\begin{equation}\label{eq:EandK}
E=\sum_{i=0}^{n-1}(n-i)\cdot E_i,\quad K_{X'/X}=\sum_{i=0}^{n-1}(2(n-i)^2+(n-i)-1)\cdot E_i. 
\end{equation}
It follows that $(k_i+1)/a_i=2(n-i)+1$ for $i=0,\cdots,n-1$, and thus our Theorem~\ref{thm:bfunskew} implies (\ref{eq:SMC}).

We sketch the construction of the log resolution, based on the strategy in \cite[Chapter~4]{johnson}: this is perhaps well-known, but we weren't able to locate (\ref{eq:EandK}) explicitly in the literature. We write $Y_i\subset X$ for the subvariety of $(2n+1)\times(2n+1)$ skew-symmetric matrices of rank at most $2i$. We define the sequence of transformations $\pi_i:X^{i+1}\to X^i$, $f_i=\pi_0\circ\pi_1\circ\cdots\circ\pi_i:X^{i+1}\to X^0$, where $X^0=X$, $X^1$ is the blow-up of $X^0$ at $Y_0$, and in general $X^{i+1}$ is the blow-up of $X^i$ at the strict transform $Y^i$ of $Y_i$ along $f_{i-1}$. The desired log resolution is obtained by letting $X'=X^n$ and $f=f_{n-1}:X'\to X$. Each $Y^i$ is smooth (as we'll see shortly), so the same is true about the exceptional divisor $E_i$ of the blow-up $\pi_i$. We abuse notation and write $E_i$ also for each of its transforms along the blow-ups $\pi_{i+1},\cdots,\pi_{n-1}$. It follows from the construction below that the $E_i$'s are defined locally by the vanishing of distinct coordinate functions, so $f:X'\to X$ is indeed a log resolution.

We show by induction on $i=n,n-1,\cdots$ that $X^{n-i}$ admits an affine open cover where each open set $V$ in the cover has a filtration $V=V_i\supset V_{i-1}\supset\cdots\supset V_0$, isomorphic to
\begin{equation}\label{eq:filtration}
(Y_i^i\supset Y_{i-1}^i\supset\cdots\supset Y_0^i)\times\bb{C}^{4i+3}\times\cdots\times\bb{C}^{4(n-1)-1}\times\bb{C}^{4n-1}, 
\end{equation}
where $Y_i^n=Y_i$ and more generally
\[Y_j^i\rm{ is the variety of }(2i+1)\times(2i+1)\rm{ matrices of rank at most }2j.\]
The key property of the filtration (\ref{eq:filtration}) is that for each $j=0,\cdots,i$, $V_j$ is obtained by intersecting $V$ with the strict transform of $Y_{n-i+j}$ along $f_{n-i-1}$. In particular $V_0=V\cap Y^{n-i}$ is (on the affine patch $V$) the center of blow-up for $\pi_i$. Since $Y_0^0$ is just a point, $V_0$ is an affine space and hence smooth.

When $i=n$, $X^{n-i}=X$, so we can take $V=X$ and (\ref{eq:filtration}) to be the filtration $X=Y_n\supset Y_{n-1}\supset\cdots\supset Y_0$. We discuss the first blow-up ($i=n-1$) and the associated filtration, while for $i<n-1$ the conclusion follows from an easy iteration of our argument. We write $x_{ij}$ (resp. $y_{ij}$), $1\leq i<j\leq 2n+1$ for the coordinate functions on $X$ (resp. on $\bb{P}X$, the projectivization of $X$). $X^1$ is defined inside $X\times\bb{P}X$ by the equations $x_{ij}y_{kl}=x_{kl}y_{ij}$, and we choose $V\subset X^1$ to be the affine patch where $y_{12}\neq 0$ (similar reasoning applies on each of the affine patches $y_{ij}\neq 0$). The coordinate functions on $V$ are $t_0=x_{12}$ and $u_{ij}=y_{ij}/y_{12}$ for $(i,j)\neq (1,2)$. Setting $u_{12}=1$, we get that the map $\pi_0:V\to X^0$ corresponds to a ring homomorphism
\[\pi_0^*:\bb{C}[x_{ij}]\lra\bb{C}[t_0,u_{ij}]\rm{ given by }x_{ij}\mapsto t_0\cdot u_{ij},\]
and $E_0\cap V$ is defined by the equation $t_0=0$. With the usual conventions $u_{ji}=-u_{ij}$, $u_{ii}=0$, we write $M_{ij}=\Pf_{\{1,2,i+2,j+2\}}$ for the Pfaffian of the $4\times 4$ principal skew-symmetric submatrix of $(u_{ij})$ obtained by selecting the rows and columns of $u$ indexed by $1,2,i+2$ and $j+2$, $1\leq i,j\leq 2n-1$. Using the calculation in the proof of Lemma~\ref{lem:localizeskewsym}, we obtain that $\{M_{ij}: 1\leq i<j\leq 2n-1\}\cup\{t_0\}\cup\{u_{1i}$, $u_{2i}:i=3,\cdots,2n+1\}$ is a system of coordinate functions on $V$, and moreover
\begin{equation}\label{eq:transformIp}
\pi_0^*(I_{p+1}(x_{ij}))=t_0^{p+1}\cdot I_p(M_{ij}),\rm{ for }p=1,\cdots,n,
\end{equation}
where $I_p(a_{ij})$ denotes the ideal generated by the $2p\times 2p$ Pfaffians of the skew-symmetric matrix $(a_{ij})$. Thinking of $\{t_0\}\cup\{u_{1i},u_{2i}:i=3,\cdots,2n+1\}$, as the coordinate functions on $\bb{C}^{4n-1}$, and of $\{M_{ij}\}$ as the coordinate functions on $X_{n-1}=Y_{n-1}^{n-1}$, we identify $Y^{n-1}_{p-1}$ with the zero locus of $I_p(M_{ij})$ for $p=1,\cdots,n$, and note that by (\ref{eq:transformIp}) it is the strict transform of $Y_p$ which is the variety defined by $I_{p+1}(x_{ij})$. This yields the filtration (\ref{eq:filtration}) for $i=n-1$. By letting $p=n-1$ in (\ref{eq:transformIp}) and noting that $I=I_n(x_{ij})$, we obtain that the inverse image $\pi_0^{-1}(I)=I\mc{O}_{X^1}$ vanishes with multiplicity $n$ along $E_0$. Iterating this, we obtain the formula (\ref{eq:EandK}) for the exceptional divisor $E$. Pulling back the standard volume form $dx=dx_{12}\wedge\cdots\wedge dx_{n-1,n}$ on $X$ along $\pi_0$, we obtain (on the affine patch $V$)
\[\pi_0^*(dx)=t_0^{2n^2+n-1}\cdot dt_0\wedge du_{13}\wedge\cdots\wedge du_{n-1,n},\]
which vanishes with multiplicity $2n^2+n-1$ along $E_0$. Iterating this, we obtain formula (\ref{eq:EandK}) for $K_{X'/X}$.

\section*{Acknowledgments} 
We are grateful to Nero Budur for many interesting conversations and helpful suggestions. Experiments with the computer algebra software Macaulay2 \cite{M2} have provided numerous valuable insights. Raicu acknowledges the support of the National Science Foundation under grant DMS-1458715. Walther acknowledges the support of the National Science Foundation under grant DMS-1401392. Weyman acknowledges the support of the Alexander von Humboldt Foundation, and of the National Science Foundation under grant DMS-1400740.


\begin{bibdiv}
  \begin{biblist}

\bib{abeasis-delfra}{article}{
   author={Abeasis, S.},
   author={Del Fra, A.},
   title={Young diagrams and ideals of Pfaffians},
   journal={Adv. in Math.},
   volume={35},
   date={1980},
   number={2},
   pages={158--178},
   issn={0001-8708},
   review={\MR{560133 (83f:14040)}},
   doi={10.1016/0001-8708(80)90046-8},
}

\bib{bernstein}{article}{
   author={Bern{\v{s}}te{\u\i}n, I. N.},
   title={Analytic continuation of generalized functions with respect to a
   parameter},
   journal={Funkcional. Anal. i Prilo\v zen.},
   volume={6},
   date={1972},
   number={4},
   pages={26--40},
   issn={0374-1990},
   review={\MR{0320735 (47 \#9269)}},
}
	
\bib{budur-bsatogenl}{article}{
   author={Budur, Nero},
   title={Bernstein-Sato polynomials},
   note = {Lecture notes: Summer School on Algebra, Algorithms, and Algebraic Analysis, Rolduc Abbey, Netherlands},
   date={2013}
}

\bib{budur-barcelona}{article}{
   author={Budur, Nero},
   title={Bernstein-Sato polynomials and generalizations},
   journal = {available at \newline\url{https://perswww.kuleuven.be/~u0089821/Barcelona/BarcelonaNotes.pdf}},
   note = {Lecture notes, UPC Barcelona},
   date={2015}
}

\bib{budur-mustata-saito}{article}{
   author={Budur, Nero},
   author={Musta{\c{t}}{\v{a}}, Mircea},
   author={Saito, Morihiko},
   title={Bernstein-Sato polynomials of arbitrary varieties},
   journal={Compos. Math.},
   volume={142},
   date={2006},
   number={3},
   pages={779--797},
   issn={0010-437X},
   review={\MR{2231202 (2007c:32036)}},
   doi={10.1112/S0010437X06002193},
}

\bib{css-capelli}{article}{
   author={Caracciolo, Sergio},
   author={Sokal, Alan D.},
   author={Sportiello, Andrea},
   title={Noncommutative determinants, Cauchy-Binet formulae, and
   Capelli-type identities. I. Generalizations of the Capelli and Turnbull
   identities},
   journal={Electron. J. Combin.},
   volume={16},
   date={2009},
   number={1},
   pages={Research Paper 103, 43},
   issn={1077-8926},
   review={\MR{2529812 (2010g:15003)}},
}
 
\bib{css}{article}{
   author={Caracciolo, Sergio},
   author={Sokal, Alan D.},
   author={Sportiello, Andrea},
   title={Algebraic/combinatorial proofs of Cayley-type identities for
   derivatives of determinants and Pfaffians},
   journal={Adv. in Appl. Math.},
   volume={50},
   date={2013},
   number={4},
   pages={474--594},
   issn={0196-8858},
   review={\MR{3032306}},
   doi={10.1016/j.aam.2012.12.001},
}

\bib{deconcini-eisenbud-procesi}{article}{
   author={de Concini, C.},
   author={Eisenbud, David},
   author={Procesi, C.},
   title={Young diagrams and determinantal varieties},
   journal={Invent. Math.},
   volume={56},
   date={1980},
   number={2},
   pages={129--165},
   issn={0020-9910},
   review={\MR{558865 (81m:14034)}},
   doi={10.1007/BF01392548},
}

\bib{denef-loeser}{article}{
   author={Denef, J.},
   author={Loeser, F.},
   title={Caract\'eristiques d'Euler-Poincar\'e, fonctions z\^eta locales et
   modifications analytiques},
   language={French},
   journal={J. Amer. Math. Soc.},
   volume={5},
   date={1992},
   number={4},
   pages={705--720},
   issn={0894-0347},
   review={\MR{1151541 (93g:11118)}},
   doi={10.2307/2152708},
}

\bib{denef-loeser-jag}{article}{
   author={Denef, Jan},
   author={Loeser, Fran{\c{c}}ois},
   title={Motivic Igusa zeta functions},
   journal={J. Algebraic Geom.},
   volume={7},
   date={1998},
   number={3},
   pages={505--537},
   issn={1056-3911},
   review={\MR{1618144 (99j:14021)}},
}

\bib{docampo}{article}{
   author={Docampo, Roi},
   title={Arcs on determinantal varieties},
   journal={Trans. Amer. Math. Soc.},
   volume={365},
   date={2013},
   number={5},
   pages={2241--2269},
   issn={0002-9947},
   review={\MR{3020097}},
   doi={10.1090/S0002-9947-2012-05564-4},
}

\bib{howe-umeda}{article}{
   author={Howe, Roger},
   author={Umeda, T{\=o}ru},
   title={The Capelli identity, the double commutant theorem, and
   multiplicity-free actions},
   journal={Math. Ann.},
   volume={290},
   date={1991},
   number={3},
   pages={565--619},
   issn={0025-5831},
   review={\MR{1116239 (92j:17004)}},
   doi={10.1007/BF01459261},
}

\bib{M2}{article}{
          author = {Grayson, Daniel R.},
          author = {Stillman, Michael E.},
          title = {Macaulay 2, a software system for research
                   in algebraic geometry},
          journal = {Available at \url{http://www.math.uiuc.edu/Macaulay2/}}
        }

\bib{johnson}{book}{
   author={Johnson, Amanda Ann},
   title={Multiplier ideals of determinantal ideals},
   note={Thesis (Ph.D.)--University of Michigan},
   publisher={ProQuest LLC, Ann Arbor, MI},
   date={2003},
   pages={83},
   isbn={978-0496-43742-9},
   review={\MR{2704808}},
}

\bib{joz-pra}{article}{
   author={J{\'o}zefiak, Tadeusz},
   author={Pragacz, Piotr},
   title={Ideals generated by Pfaffians},
   journal={J. Algebra},
   volume={61},
   date={1979},
   number={1},
   pages={189--198},
   issn={0021-8693},
   review={\MR{554859 (81e:13005)}},
   doi={10.1016/0021-8693(79)90313-2},
}


\bib{kimu}{article}{
   author={Kimura, Tatsuo},
   title={The $b$-functions and holonomy diagrams of irreducible regular
   prehomogeneous vector spaces},
   journal={Nagoya Math. J.},
   volume={85},
   date={1982},
   pages={1--80},
   issn={0027-7630},
   review={\MR{648417 (84j:32017)}},
}

\bib{prehomogeneous}{book}{
   author={Kimura, Tatsuo},
   title={Introduction to prehomogeneous vector spaces},
   series={Translations of Mathematical Monographs},
   volume={215},
   note={Translated from the 1998 Japanese original by Makoto Nagura and
   Tsuyoshi Niitani and revised by the author},
   publisher={American Mathematical Society},
   place={Providence, RI},
   date={2003},
   pages={xxii+288},
   isbn={0-8218-2767-7},
   review={\MR{1944442 (2003k:11180)}},
}

\bib{lazarsfeld}{book}{
   author={Lazarsfeld, Robert},
   title={Positivity in algebraic geometry. II},
   series={Ergebnisse der Mathematik und ihrer Grenzgebiete. 3. Folge. A
   Series of Modern Surveys in Mathematics [Results in Mathematics and
   Related Areas. 3rd Series. A Series of Modern Surveys in Mathematics]},
   volume={49},
   note={Positivity for vector bundles, and multiplier ideals},
   publisher={Springer-Verlag, Berlin},
   date={2004},
   pages={xviii+385},
   isbn={3-540-22534-X},
   review={\MR{2095472 (2005k:14001b)}},
   doi={10.1007/978-3-642-18808-4},
}

\bib{bub1}{article}{
        author = {L\H{o}rincz, Andr\'as C.},
        title = {The $b$-functions of quiver semi-invariants},
	journal = {arXiv},
	number = {1310.3691},
	date={2013}
}

\bib{bub2}{article}{
        author = {L\H{o}rincz, Andr\'as C.},
        title = {Singularities of zero sets of semi-invariants for quivers},
	journal = {arXiv},
	number = {1509.04170},
	date={2015}
}

\bib{raicu-Dmods}{article}{
        author = {Raicu, Claudiu},
        title = {Characters of equivariant $\D$-modules on spaces of matrices},
        note={To appear in Compos. Math.},
	journal = {arXiv},
	number = {1507.06621},
	date={2015}
}

\bib{raicu-weyman-witt}{article}{
   author={Raicu, Claudiu},
   author={Weyman, Jerzy},
   author={Witt, Emily E.},
   title={Local cohomology with support in ideals of maximal minors and
   sub-maximal Pfaffians},
   journal={Adv. Math.},
   volume={250},
   date={2014},
   pages={596--610},
   issn={0001-8708},
   review={\MR{3122178}},
   doi={10.1016/j.aim.2013.10.005},
}

\bib{sato-shintani}{article}{
   author={Sato, Mikio},
   author={Shintani, Takuro},
   title={On zeta functions associated with prehomogeneous vector spaces},
   journal={Ann. of Math. (2)},
   volume={100},
   date={1974},
   pages={131--170},
   issn={0003-486X},
   review={\MR{0344230 (49 \#8969)}},
}

\bib{thaddeus}{article}{
   author={Thaddeus, Michael},
   title={Complete collineations revisited},
   journal={Math. Ann.},
   volume={315},
   date={1999},
   number={3},
   pages={469--495},
   issn={0025-5831},
   review={\MR{1725990 (2000j:14081)}},
   doi={10.1007/s002080050324},
}

\bib{vainsencher}{article}{
   author={Vainsencher, Israel},
   title={Complete collineations and blowing up determinantal ideals},
   journal={Math. Ann.},
   volume={267},
   date={1984},
   number={3},
   pages={417--432},
   issn={0025-5831},
   review={\MR{738261 (85f:14053)}},
   doi={10.1007/BF01456098},
}

\bib{veys}{article}{
   author={Veys, Willem},
   title={Arc spaces, motivic integration and stringy invariants},
   conference={
      title={Singularity theory and its applications},
   },
   book={
      series={Adv. Stud. Pure Math.},
      volume={43},
      publisher={Math. Soc. Japan, Tokyo},
   },
   date={2006},
   pages={529--572},
   review={\MR{2325153 (2008g:14023)}},
}

\bib{weyman}{book}{
   author={Weyman, Jerzy},
   title={Cohomology of vector bundles and syzygies},
   series={Cambridge Tracts in Mathematics},
   volume={149},
   publisher={Cambridge University Press},
   place={Cambridge},
   date={2003},
   pages={xiv+371},
   isbn={0-521-62197-6},
   review={\MR{1988690 (2004d:13020)}},
   doi={10.1017/CBO9780511546556},
}

\bib{witt}{article}{
   author={Witt, Emily E.},
   title={Local cohomology with support in ideals of maximal minors},
   journal={Adv. Math.},
   volume={231},
   date={2012},
   number={3-4},
   pages={1998--2012},
   issn={0001-8708},
   review={\MR{2964631}},
   doi={10.1016/j.aim.2012.07.001},
}
  \end{biblist}
\end{bibdiv}

\end{document}